\documentclass[11pt,leqno]{article}
\usepackage[sc]{mathpazo}
\linespread{1.1}
\usepackage[OT2,T1]{fontenc}
\usepackage[utf8]{inputenc}
\usepackage[top = 3.5cm, bottom  = 3cm, left = 3.7cm, right = 3.2cm]{geometry}
\usepackage{amsmath, amscd, amssymb, amsthm, amsfonts, euscript, mathtools, tikz-cd}
\usepackage{adjustbox}
\usepackage{url}
\usepackage{hyperref}
\hypersetup{ colorlinks=true, citecolor= black,linkcolor = black}

\usepackage{todonotes}
\usepackage{musicography}

\DeclareSymbolFont{cyrletters}{OT2}{wncyr}{m}{n}
\DeclareMathSymbol{\Sha}{\mathalpha}{cyrletters}{"58}
\DeclareMathSymbol{\Bcyr}{\mathalpha}{cyrletters}{"42}

\renewcommand{\P}{\mathbf{P}}

\theoremstyle{definition}
\newtheorem{thm}{Theorem}[section]
\newtheorem{defi}[thm]{Definition}
\newtheorem*{defi*}{Definition}
\newtheorem{prop}[thm]{Proposition}
\newtheorem*{prop*}{Proposition}
\newtheorem{lemma}[thm]{Lemma}
\newtheorem{cor}[thm]{Corollary}
\newtheorem*{conjecture}{Conjecture}
\newtheorem*{cor*}{Corollary}
\newtheorem{rmk}[thm]{Remark}
\newcommand{\NN}{\mathbf{N}}
\newcommand{\ZZ}{\mathbf{Z}}

\newcommand{\KK}{\mathrm{K}}

\newcommand{\Norm}{\operatorname{N}}

\newcommand{\lau}[1]{(\!(#1)\!)}
\DeclareMathOperator{\Spec}{Spec}

\newtheorem{thmalpha}{Theorem}

\title{Stability of the Kato—Kuzumaki's properties under field extensions}
\author{ Felipe Gambardella \\ \vspace*{-1ex} \small \textit{École polytechnique de Paris (CMLS)} \and Harry C. Shaw  \\ \vspace*{-1ex} \small \textit{University of Bath}}
\date{}

\begin{document}

\maketitle
\begin{abstract}
\noindent In 1986, Kato and Kuzumaki introduced some Diophantine properties of fields, called the $C_i^q$ properties, and they hoped they would provide a good characterization of the cohomological dimension of fields. In this paper, we study the stability of some variants of the $C_i^q$ properties under transcendental and algebraic extensions. As an application, we obtain the $C_n^1$ property for the field $\mathbf{F}_p(x_1,\hdots,x_n)$.
\end{abstract}

\setcounter{tocdepth}{1}
\tableofcontents
\section{Introduction}
\noindent Among the many measures of complexity for a field $k$ we find the \textit{cohomological dimension}, usually denoted as $\mathrm{cd} (k)$. It is a very coarse measure for the complexity of the Galois cohomology of a field (see \cite[Definition.~6.5.9]{NeukirchSchmidtWingberg2008CohomologyNumbers}), in particular, it only depends on the absolute Galois group of $k$. Galois cohomology has a very robust theory that makes possible the computation of the cohomological dimension in most cases. However, the relationship between cohomological dimension and the \textit{Diophantine} properties of the field $k$ are still rather mysterious. For instance, it is a prominent open question due to Serre whether a $C_i$ field $k$, as defined by Tsen in \cite{Tsen1936} (see also \cite[page~374]{Lang1952QuasiAlgebraicClosure}), satisfies $\mathrm{cd} (k)\leq i$. This question admits a positive answer when $i$ is at most $2$, the case $i=1$ being rather elementary and the case $i=2$ relying on a deep result due to Suslin (see \cite[II \S 4.5]{Serre1994CohomologieGaloisienne}). Already the case of $i=3$ is still open.\par
Exploring further the relationship between rational points on hypersurfaces of small degree and the cohomological dimension, Kato and Kuizumaki suggested in \cite{KatoKuzumaki1986Dimension} a weakening of the $C_i$ property involving Milnor $\mathrm{K}$-theory. The main conjecture of \cite{KatoKuzumaki1986Dimension} states that the cohomological dimension of a field is characterised by that weakening of the $C_i$ property. Unfortunately, this conjecture is false in full generality, but still open for \textit{arithmetically interesting fields}. \par 
    
   We proceed to describe the properties defined by Kato and Kuzumaki, and their conjecture in more detail. Let $k$ be a field. For any finite extension $l/k$, Milnor $\mathrm{K}$-theory admits a norm map $\mathrm{N}_{l/k}: \mathrm{K}_q(l) \to \mathrm{K}_q(k)$, see \cite[\S 7.3]{GilleSzamuely2017CentralGalois}. For a $k$-variety $Z$, Kato and Kuzumaki defined its norm group $\mathrm{N}_q(Z/k)$ as the subgroup of $\mathrm{K}_q(k)$ generated by $\mathrm{N}_{l/k}(\mathrm{K}_q(l))$, where $l/k$ ranges over all finite extensions such that $Z(l) \neq \emptyset$. Fix a pair of non-negative integers $i,q$. The $C_i^q$ property is defined as follows: for every pair of integers $d, n\geq 1$, finite extension $k'/k$, and hypersurface $Z \subseteq \mathbf{P}^n_{k'}$ of degree $d$ such that $d^i \leq n$ we have $\mathrm{K}_q(k') = \mathrm{N}_q(Z/k')$. For example, a field $k$ satisfies $C_i^0$ if and only if for every pair of integers $d,n \geq 1$, field extension $k'/k$, and hypersurface $Z$ in $\mathbf{P}^n_{k'}$ of degree $d$ such that $d^i \leq n$ the variety $Z$ admits a $0$-cycle of degree $1$. In the other extreme, a field $k$ satisfies $C_0^q$ if and only if for every tower $k''/k'/k$ of finite extensions the norm map $\mathrm{N}_{k''/k'}: \mathrm{K}_{q}(k'') \to \mathrm{K}_q(k')$ is surjective. \par
    
    As we already mentioned, the original motivation to consider the $C_i^q$ property was to give a Diophantine characterisation of the cohomological dimension of $k$ or more precisely, a slight modification of the cohomological dimension denoted by $\dim k$, see \cite[Definition~1]{KatoKuzumaki1986Dimension}. Explicitly, the following conjecture can be found in \cite{KatoKuzumaki1986Dimension}: \par
    \begin{conjecture} (\cite[Main Conjecture]{KatoKuzumaki1986Dimension})
     Let $i,q$ be non-negative integers. A field $k$ satisfies $C_i^q$ if and only if $\mathrm{dim} \, k \leq i+q$.    
    \end{conjecture}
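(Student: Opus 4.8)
The plan is to prove the two implications of the biconditional separately, using the Bloch--Kato norm residue isomorphism $\mathrm{K}_q(k')/p \xrightarrow{\sim} H^q(k',\mu_p^{\otimes q})$ as the bridge between the Diophantine norm condition and Galois cohomology. I read $\dim k \le i+q$ through its cohomological meaning: for every finite extension $k'/k$ and every prime $p$ the mod-$p$ Milnor K-groups of $k'$ vanish above degree $i+q$, with the customary correction at $p=2$ and in characteristic $p$ absorbed into the definition of $\dim$. Because $\mathrm{N}_q(Z/k')$ and the quotient $\mathrm{K}_q(k')/\mathrm{N}_q(Z/k')$ are detected one prime at a time, both implications reduce to statements about a single $p$ and about symbols in $\mathrm{K}_\bullet(k')/p$.

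For the direction $C_i^q \Rightarrow \dim k \le i+q$ I argue by contraposition. If $\dim k > i+q$ then, after a finite extension $k'/k$, there are a prime $p$ and a nonzero symbol $\xi = \alpha\cdot\beta \in \mathrm{K}_{i+q+1}(k')/p$, where $\alpha=\{a_1,\dots,a_{i+1}\}$ has length $i+1$ and $\beta\in\mathrm{K}_q(k')/p$. I would attach to $\alpha$ a splitting hypersurface $Z\subseteq\P^n_{k'}$ of $p$-power degree $d$ with $d^i\le n$, of norm-variety type, having the property that $Z(l)\neq\emptyset$ forces $\alpha_l=0$ in $\mathrm{K}_{i+1}(l)/p$. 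For such an $l$ and any $\gamma\in\mathrm{K}_q(l)$ the projection formula gives $\alpha\cdot\mathrm{N}_{l/k'}(\gamma)=\mathrm{N}_{l/k'}(\alpha_l\cdot\gamma)=0$, so the image of $\mathrm{N}_q(Z/k')$ in $\mathrm{K}_q(k')/p$ is annihilated by $\alpha$; since $\alpha\cdot\beta=\xi\neq 0$, the class $\beta$ lies outside that image and hence $\mathrm{N}_q(Z/k')\neq\mathrm{K}_q(k')$, contradicting $C_i^q$. The nontrivial input is the construction of the splitting hypersurface meeting the sharp bound $d^i\le n$, in the spirit of Rost's norm varieties.

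For the converse $\dim k \le i+q \Rightarrow C_i^q$, the constructive heart of the statement, I proceed by induction on $i$. The base case $i=0$ asserts that $\dim k\le q$ makes every corestriction $\mathrm{N}_{k''/k'}\colon\mathrm{K}_q(k'')\to\mathrm{K}_q(k')$ in a finite tower surjective; through Bloch--Kato this is surjectivity of corestriction on $H^q(-,\mu_p^{\otimes q})$, which is standard once $\mathrm{cd}_p(k')\le q$ forces this to be the top nonvanishing degree. For the inductive step, take $Z\subseteq\P^n_k$ of degree $d$ with $d^i\le n$; after reducing to $d=p$ prime I fibre $Z$ and slice it by hyperplanes, producing over suitable residue fields $\kappa$ hypersurfaces obeying the bound for $C_{i-1}^q$. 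Each such $\kappa$ satisfies $\dim\kappa\le i-1+q$, so the induction hypothesis fills its norm group, and a Tsen--Lang type dévissage for Milnor K-theory recombines the resulting classes along the norm maps to yield $\mathrm{N}_q(Z/k)=\mathrm{K}_q(k)$.

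The decisive obstacle is this recombination. Converting cohomological smallness into the genuine existence of closed points, or of finite splitting fields of degree prime to $p$, on a nonlinear hypersurface forces one to use the inequality $d^i\le n$ quantitatively rather than formally, and this is exactly where a sharp Tsen--Lang input for $\mathrm{K}_q$ compatible with corestriction is needed. Guaranteeing that the norm classes produced slice by slice assemble to the \emph{whole} group $\mathrm{K}_q(k)$ and not merely to a proper subgroup, and controlling the behaviour at $p=2$ and in characteristic $p$ where $\dim k$ departs from the naive cohomological dimension, are the points where the argument is most delicate and will demand the greatest care.
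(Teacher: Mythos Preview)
The statement you are attempting to prove is not a theorem in the paper: it is the Kato--Kuzumaki \emph{conjecture}, stated as such, and the paper immediately notes that it is \emph{false} in full generality. Counterexamples to the implication $\dim k \le i+q \Rightarrow C_i^q$ are cited: Merkur'ev produced a field of cohomological dimension $2$ that fails $C_2^0$, and Colliot-Th\'el\`ene--Madore produced a field of cohomological dimension $1$ that fails $C_1^0$. Consequently no proof of the biconditional can exist, and the paper makes no attempt at one.

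Your proposal therefore has a genuine, irreparable gap in the direction $\dim k \le i+q \Rightarrow C_i^q$. You locate it yourself: the ``Tsen--Lang type d\'evissage for Milnor $\mathrm{K}$-theory'' in the inductive step is not merely delicate, it is impossible in general. The slicing you describe does not force the residue fields $\kappa$ to have $\dim \kappa \le i-1+q$ --- cohomological dimension is not controlled by passing to residue fields of arbitrary closed points on a hypersurface --- and even when it is, the recombination of norm classes cannot always fill $\mathrm{K}_q(k)$, precisely because the counterexamples exist. The fields in those counterexamples are built by transfinite induction to defeat exactly this kind of argument: they have small cohomological dimension yet admit low-degree hypersurfaces whose closed points all have degree divisible by a fixed prime. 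Your outline for the forward direction $C_i^q \Rightarrow \dim k \le i+q$ via splitting varieties is more plausible (and indeed the $i=0$ case is equivalent to Bloch--Kato), but the existence of norm varieties meeting the sharp bound $d^i \le n$ for odd $p$ and general $i$ is itself nontrivial and not supplied here.
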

    
    \noindent As it was pointed out in the original work \cite{KatoKuzumaki1986Dimension}, Bloch-Kato's conjecture (now resolved thanks to the work of Joukhovitsky, Rost, Suslin, Voevodsky among others) implies that Kato-Kuzumaki's conjecture holds when $i=0$. Unfortunately, counterexamples to this conjecture can be found in \cite[Proposition~2]{Merkurjev1991NotC20} and \cite{CTMadore2004NotC10} when $i > 0$. Nevertheless, these counterexamples are given by fields constructed in a very intricate manner. As such it is not unreasonable to expect that Kato-Kuzumaki's conjecture holds for fields usually appearing in arithmetic geometry, e.g. finitely generated fields over their prime subfield, local fields or separably closed fields. \par
    Kato-Kuzumaki's conjecture is not known to hold in many cases. Among the known examples we have the function field $\mathbf{C}(x_1,\hdots, x_n)$ as proved in \cite{Diego2018KK} and the Laurent series field $\mathbf{C}\lau{t_1}\lau{t_2}$ as proved in \cite{Wittenberg2015KKQp}. Even verifying the $C_i^q$ property for a given field can be very challenging for $i\neq 0$. For instance, the only known cases where $C_1^1$ property is verified are the following: $\mathbf{C}\lau{t_1}\lau{t_2}$, $p$-adic fields and totally imaginary number field due to \cite{Wittenberg2015KKQp} - a second proof in the case of totally imaginary number fields was given in \cite{Diego2018KK}, and $\mathbf{C}(x_1,x_2)$ due to \cite{Diego2018KK}. In the case of function fields very little is known when the base field is not algebraically closed — to the knowledge of the authors the only known results in this direction is when the function fields is one-dimensional and the base field is either a finite field (\cite{Diego2018KK}), a $p$-adic field (\cite{DiegoLuco2024KKp-adicFunction}), or an iterated Laurent series field over a $p$-adic field (\cite{Gambardella2025HigherLocalKK}).\par
    
    In the present manuscript we are mainly interested in the behaviour of the $C_i^q$ properties under field extensions —e.g. from a field $k$ satisfying $C_i^q$ to the field of rational functions $k(x)$. The main application of the methods of this paper is the following result:
    \begin{thmalpha}[Corollary \ref{cor Cn1 function finite}] \label{thmalpha function fields}
        The field $\mathbf{F}_p(x_1,\hdots, x_n)$ satisfies $C_n^1$.
    \end{thmalpha}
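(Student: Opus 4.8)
The plan is to argue by induction on $n$, leaning entirely on the stability of the $C_i^1$ property under adjunction of a single transcendental element established earlier in the paper; that is, on the statement that if a field $k$ satisfies $C_i^1$, then $k(x)$ satisfies $C_{i+1}^1$. Granting this, the corollary reduces to feeding the base field $\mathbf{F}_p$ into that machine $n$ times, each new variable $x_j$ raising the index $i$ by one, starting from $C_0^1$ and ending at $C_n^1$.

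First I would settle the base case, namely that $\mathbf{F}_p$ satisfies $C_0^1$. Unwinding the definition recalled in the introduction, this asks that for every tower $k''/k'/\mathbf{F}_p$ of finite extensions the norm map $\mathrm{N}_{k''/k'}\colon \mathrm{K}_1(k'') \to \mathrm{K}_1(k')$ be surjective. Since $\mathrm{K}_1$ of a field is its multiplicative group, this is exactly the surjectivity of the field norm $\mathbf{F}_{p^b}^\times \to \mathbf{F}_{p^a}^\times$ for $a \mid b$, a classical fact about finite fields: the norm of a generator of the cyclic group $\mathbf{F}_{p^b}^\times$ already has order $p^a - 1$, so its image exhausts $\mathbf{F}_{p^a}^\times$. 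Alternatively one may invoke the $i=0$ instance of the Kato--Kuzumaki conjecture, which holds unconditionally via Bloch--Kato, applied to $\mathbf{F}_p$ whose cohomological dimension equals $1 = 0 + 1$.

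For the inductive step, suppose $\mathbf{F}_p(x_1,\ldots,x_m)$ satisfies $C_m^1$. Writing $\mathbf{F}_p(x_1,\ldots,x_{m+1}) = \mathbf{F}_p(x_1,\ldots,x_m)(x_{m+1})$ and applying the stability theorem with $k = \mathbf{F}_p(x_1,\ldots,x_m)$ and index $i = m$, I conclude that $\mathbf{F}_p(x_1,\ldots,x_{m+1})$ satisfies $C_{m+1}^1$. Iterating from the base case through exactly $n$ transcendental steps then yields the $C_n^1$ property for $\mathbf{F}_p(x_1,\ldots,x_n)$, which is the assertion.

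The genuine difficulty is concentrated in the stability theorem itself — the passage from $k$ to $k(x)$, which is presumably proved by clearing denominators and reducing a norm-group computation over $k(x)$ to a family of computations over $k$ via the $C_i^1$ hypothesis — and so it does not resurface in the corollary. For the present deduction the only points requiring care are that the hypotheses of that theorem genuinely apply at each stage, in particular to the imperfect fields $\mathbf{F}_p(x_1,\ldots,x_m)$ that arise along the way, and that the index bookkeeping is tracked correctly so that the $n$ transcendental adjunctions carry $C_0^1$ precisely to $C_n^1$.
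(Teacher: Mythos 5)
Your reduction of the corollary to the single stability step ``if $k$ satisfies $C_i^1$ then $k(x)$ satisfies $C_{i+1}^1$'' does not work, because that step is not established in the paper and is in fact false in general. The paper proves exactly the opposite in the remark following Lemma~\ref{lemma going-down KK properties}: if the implication ``$k$ satisfies $C_i^q$ $\Rightarrow$ $k(x)$ satisfies $C_{i+1}^q$'' held for $q\geq 1$, then combined with the going-down Lemma~\ref{lemma going-down KK properties} one would deduce that $C_0^1$ implies $C_1^0$, contradicting the Colliot-Th\'el\`ene--Madore construction of a field of cohomological dimension~$1$ that is not $C_1^0$. The structural obstruction is that the transfer machinery of Section~\ref{sec: Transfer results for C_i(alpha)} only moves \emph{constant} classes up a transcendental extension (Theorem~\ref{thm constant transfer}): a general symbol $\{f\}\in \mathrm{K}_1(k(x))$ with $f$ transcendental over $k$ is not reached by starting from $C_i^1$ of $k$ alone. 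So your induction cannot get off the ground at the $n=0 \to n=1$ step: $C_0^1$ for $\mathbf{F}_p$ (which is correct, and proved as you say) does not yield $C_1^1$ for $\mathbf{F}_p(x)$ by any transfer result in the paper.

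The paper's actual route is different in both of its two ingredients. First, the one-variable case $C_1^1$ for $\mathbf{F}_p(x)$ (Proposition~\ref{prop global fields C11}) is \emph{not} obtained by transfer from $\mathbf{F}_p$; it requires genuinely arithmetic input, namely the $C_1^1$ property of the local fields $\mathbf{F}_p\lau{t}$ via Lubin--Tate theory (Proposition~\ref{prop local fields B11}) together with a local-global principle for norm groups over global function fields (Theorem~\ref{thm Diego}). Second, the passage from $\mathbf{F}_p(x_1)$ to $\mathbf{F}_p(x_1,\hdots,x_n)$ uses Theorem~\ref{thm transfer applications}, whose mechanism is that any symbol $\{f\}\in\mathrm{K}_1(K)$ becomes a \emph{constant} class over the algebraic closure of $\mathbf{F}_p(f)$ in $K$, a subfield of transcendence degree at most $1$; the constant-class transfer Theorem~\ref{thm constant transfer} then raises the index by the remaining transcendence degree $n-1$. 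This is why the induction must be seeded at one variable with $C_1^1$ rather than at zero variables with $C_0^1$: one needs $q$ variables available to absorb the non-constancy of a $\mathrm{K}_q$-symbol. Your index bookkeeping happens to land on the right exponent, but the engine driving each step is missing.
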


    \noindent There are two main ingredient in the proof of Theorem~\ref{thmalpha function fields}: the case $n=1$ that build upon the tools developed in \cite{Diego2018KK} and a transfer result in the spirit of \cite[Theorem 2a]{Nagata1957NotesPaperLang}. In order to state the transfer result, we need the following definition: For a class $\alpha \in \KK_q(k)$ we say that the field $k$ satisfies $C_i(\alpha)$ if for every hypersurface $Z \subseteq \P^n_k$ of degree $d$ such that $d^i\leq n$ we have $\alpha \in \mathrm{N}_q(Z/k)$. Moreover, a field $k$ is said to satisfy $C_i^q(k)$ if it satisfies $C_i(\alpha)$ for every $\alpha \in \KK_q(k)$. The main transfer result is the following:
    \begin{thmalpha}[Theorem \ref{thm constant transfer}]\label{thm into constant transfer}
        Let $k$ be a field of characteristic exponent $p$. Let $i,q \in \NN$ and $\alpha \in \KK_q(k)$. Assume that $k$ satisfies $C_i(\alpha)$ and $[k:k^p] \leq p^{i+1}$. Then, every finitely generated regular extension $K/k$ of finite transcendence degree $\delta$ satisfies $C_{i+\delta}(\alpha|_K)$.
    \end{thmalpha}
    \noindent This result can be used to get a similar transfer statement for Laurent series fields, in the same spirit to how the behaviour of the $C_i$ properties when one adds a formal variable (\cite[Theorem~2]{Greenberg1966RationalPointsHenselian}) can be deduced from its behaviour when one adds a rational variable (\cite[Theorem~2a]{Nagata1957NotesPaperLang}), see Corollary~\ref{cor constant transfer Laurent}. \par
    Lang and Nagata proved in \cite[Theorem 2a]{Nagata1957NotesPaperLang} that the $C_i$ property is stable under finite extensions. When trying to generalise this stability to the $C_i^q(k)$ property we have the additional difficulty of having to consider elements that lie in the Milnor $\KK$-groups of finite extensions of $k$, but may not come from Milnor $\KK$-theory of $k$ itself. In this direction, we obtain the following result.
    
    \begin{thmalpha}[Theorem \ref{thm: transferability under finite alg extension}]\label{thmalpha C}

Let $k$ be a field and $\alpha \in \KK_q(k)$. Assume that $k$ satisfying $C_i(\alpha)$ and if $k$ has positive characteristic $p$, further assume that $[k:k^p] \leq p^{i+1}$. Then, every finite Galois extension $l/k$ satisfies $C_i(\alpha|_l)$.

    \end{thmalpha}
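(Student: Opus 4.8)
The plan is to mimic the Lang--Nagata restriction-of-scalars argument, upgraded from ``existence of a point'' to ``membership in the norm group''. First I would reduce to the case where $l/k$ has prime degree: restriction of $\mathrm{K}$-classes is transitive and $C_i(\beta)$ transfers along towers, so by induction on $[l:k]$ it suffices to peel off one prime-degree step at a time, treating the separable part $l_{\mathrm{sep}}/k$ and the purely inseparable part $l/l_{\mathrm{sep}}$ separately. Before iterating I must check that the hypothesis $[k:k^p]\le p^i$ survives each step: it is unchanged along separable extensions, and along a purely inseparable step $F\subseteq F(a^{1/p})$ with $a\in F\setminus F^p$ the $p$-rank is again preserved, so the bound propagates through the whole dévissage.

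\textbf{Separable case.} Let $l/k$ be separable of degree $m$ and $Z=V(f)\subseteq\P^n_l$ a hypersurface of degree $d$ with $d^i\le n$. Fixing a $k$-basis $\omega_1,\dots,\omega_m$ of $l$ and substituting $x_j=\sum_t y_{j,t}\omega_t$, the single form $f$ becomes a system $g_1,\dots,g_m$ of forms of degree $d$ over $k$ in the $m(n+1)$ variables $y_{j,t}$, and the inequality $d^i\le n$ is exactly the systems bound $m(n+1)>m\,d^i$. The plan is to apply a \emph{systems version} of $C_i(\alpha)$ over $k$ (that $\alpha\in\mathrm{N}_q\big(V(g_1,\dots,g_m)/k\big)$ whenever the number of variables exceeds $(\text{number of forms})\cdot d^i$) to write $\alpha=\sum_s\mathrm{N}_{k'_s/k}(\beta_s)$ with $\beta_s\in\mathrm{K}_q(k'_s)$ over finite $k'_s/k$ carrying points of the system. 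Establishing this systems version by adapting the Tsen--Lang induction to norm groups is the first technical input, and I expect the characteristic-$p$ bound $[k:k^p]\le p^i$, equivalently $\Omega^{i+1}_{k/\FF_p}=0$, to enter precisely here.

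\textbf{Transfer and main obstacle.} To descend to $l$ I would use the Mackey/base-change formula for Milnor $\mathrm{K}$-theory: if $k'\otimes_k l=\prod_j L_j$ then $\mathrm{res}^l_k\circ\mathrm{N}_{k'/k}=\sum_j \mathrm{N}_{L_j/l}\circ\mathrm{res}^{L_j}_{k'}$, so that $\alpha|_l=\sum_s\sum_j\mathrm{N}_{L_{s,j}/l}(\mathrm{res}\,\beta_s)$. The difficulty is that a $k'_s$-point of the system produces a point of $Z$ only over \emph{some} factor $L_{s,j}$, while the formula sums over \emph{all} factors, and a factor of strictly smaller degree can be degenerate (the projective point collapsing to $0$), contributing a term with no visible $Z$-point. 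I would resolve this by arranging the norm representation of $\alpha$ to use only closed points of $V(\vec g)$ that stay inert after $\otimes_k l$, i.e.\ with $k'_s\otimes_k l$ a single field $K_s$; then the image of such a point is a genuine point of $Z$ over $K_s$, the Mackey sum has one controlled term $\mathrm{N}_{K_s/l}(\cdot)\in\mathrm{N}_q(Z/l)$, and we conclude $\alpha|_l\in\mathrm{N}_q(Z/l)$. Proving that inert closed points generate the relevant norm group (a density/moving argument) is the principal obstacle of the whole theorem.

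\textbf{Purely inseparable case.} Here $l=k(a^{1/p})$, so $l^p\subseteq k$. Writing $f=\sum_\nu c_\nu x^\nu$ with $c_\nu\in l$, the form $h:=\sum_\nu c_\nu^p z^\nu$ has coefficients $c_\nu^p\in l^p\subseteq k$ and defines a hypersurface $V(h)\subseteq\P^n_k$ of the \emph{same} degree $d$ in $n+1$ variables; thus $C_i(\alpha)$ over $k$ gives $\alpha\in\mathrm{N}_q(V(h)/k)$. Since $f(x)^p=h(x^p)$, a point of $V(h)$ over $L$ yields, after adjoining $p$-th roots of its coordinates, a point of $Z$ over a finite purely inseparable extension of $L$, and I would push $\alpha|_l$ into $\mathrm{N}_q(Z/l)$ through the base-change behaviour of norms along purely inseparable extensions. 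This is the step that genuinely needs $[k:k^p]\le p^i$: the bound controls how many independent $p$-th roots one is forced to adjoin, matching those extensions against the degree-$p$ extension $l/k$ so that the resulting norms land in $\mathrm{N}_q(Z/l)$. Combining the two cases along the dévissage of the first paragraph then yields $C_i(\alpha|_l)$ for every finite $l/k$.
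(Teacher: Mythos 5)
Your overall route --- restriction of scalars turning $Z\subseteq\P^n_l$ into a system of $m=[l:k]$ degree-$d$ forms in $m(n+1)$ variables over $k$, a ``systems version'' of $C_i(\alpha)$ over $k$, and the projection formula for $k'\otimes_k l$ --- is exactly the paper's (Propositions \ref{prop hypersurface and finite extensions} and \ref{prop intersection hypersurfaces} with Lemma \ref{lemma p-degree Ci}, followed by \cite[Lemma 7.3.6]{GilleSzamuely2017CentralGalois}). The genuine gap is in the step you yourself flag as ``the principal obstacle''. The worry about degenerate factors is unfounded: every residue field $L_{s,j}$ of $k'_s\otimes_k l$ is a composite $l\cdot\sigma(k'_s)$ for some $k$-embedding $\sigma$ of $k'_s$ into $\overline k$; the descended system $V(g_1,\dots,g_m)$ is defined over $k$ and has a $k'_s$-point, hence a $\sigma(k'_s)$-point, hence $Z(l\cdot\sigma(k'_s))=Z(L_{s,j})\neq\emptyset$ by the defining equivalence of the restriction of scalars applied to the extension $\sigma(k'_s)/k$. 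One never pushes a fixed projective point through the tensor product, so nothing ``collapses to $0$'': every term of the Mackey sum already lies in $\mathrm{N}_q(Z/l)$, and no moving argument is needed. The fix you propose in its place is moreover unworkable as stated: any $k'_s$ containing a subextension of $l/k$ of degree $>1$ has $k'_s\otimes_k l$ not a field, so restricting to inert closed points discards precisely the extensions most likely to carry points, and there is no reason such points should generate $\mathrm{N}_q\bigl(V(g_1,\dots,g_m)/k\bigr)$. Since your argument rests on this unproved and unnecessary lemma, it is incomplete at exactly this point.

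Two smaller issues. The prime-degree d\'evissage and the separate purely inseparable branch are superfluous, since restriction of scalars handles an arbitrary finite extension at once; and the inseparable branch has its own gap: a point of $V(h)$ over $L$ only yields a point of $Z$ over a purely inseparable extension $L'/L$ obtained by adjoining $p$-th roots of coordinates, so to convert a class $\beta\in\K_q(L)$ from the norm decomposition of $\alpha$ into an element of $\mathrm{N}_q(Z/l)$ you would need $\mathrm{N}_{L'/L}$ to be surjective. That surjectivity is governed by the comparison of $q$ with the $p$-degree (Lemma \ref{lemma insep above p-degree surjective}), not by the hypothesis $[k:k^p]\le p^i$, so the claim that the bound ``matches those extensions against $l/k$'' does not go through. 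Finally, in the paper the bound $[k:k^p]\le p^i$ enters the systems version not through vanishing of differentials but through Lemma \ref{lemma p-degree Ci}: it guarantees that $\bigcap_j V(g_j)$ acquires a point over a \emph{simple normal} extension, which is what makes the $l$-normic forms of Proposition \ref{prop existence l-normic} available in the Tsen--Lang induction of Proposition \ref{prop intersection hypersurfaces}.
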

    \noindent Theorem \ref{thm into constant transfer} has further applications in the context of function fields over various non-algebraically closed fields. Notably, we prove that the $C_i^q$ property for function fields is reduced to the $q$-dimensional case when the field is characteristic $0$. Explicitly, we get the following result.
    \begin{thmalpha}[Theorem \ref{thm transfer applications}]\label{thmalpha D}
        Let $i,q$ be a pair of non-negative integers and $k$ a field of exponent characteristic $p$ such that $k(x_1,\dots, x_q)^{\mathrm{perf}}$ satisfies $C_i^q$. Then, for all non-negative integers $j$ and $q' \leq q$, the field $k(x_1,\dots, x_{q'+j})$ satisfies the $C_{i+j}^{q'}$ property away from $p$.
    \end{thmalpha}

    \noindent When $k$ is perfect and $q=1$, we obtain a stronger result from which we deduced Theorem~\ref{thmalpha function fields}. Precisely, we have the following:
    \begin{thmalpha}[Theorem~\ref{thm transfer applications p-degree 1}]
        Let $i,q$ be a pair of non-negative integers and $k$ a perfect field such that $k(x)$ satisfies $C_i^1$. Then, for every non-negative integer $j$, the field $k(x_1,\dots, x_{j+1})$ satisfies the $C_{i+j}^{1}$~property.
    \end{thmalpha}
    \noindent In the same spirit, we obtain partial results over $p$-adic fields or totally imaginary number fields, see Section \ref{sec: Applications} for the precise results.

\subsection*{Outline of the paper}
\begin{itemize}
    \item In Section~\ref{sec: Preliminaries and notation} we fix the notation used throughout the remainder of the paper and we recall and prove of several results needed for later proofs.
    \item In Section~\ref{sec: Intersection of hypersurfaces} we study the norm groups of intersections of hypersurfaces over fields satisfying the $C^q_i$ property.
    \item In Section~\ref{sec: Stability under finite extensions} we use the results from Section~\ref{sec: Intersection of hypersurfaces} to prove Theorem~\ref{thmalpha C}.
    \item In Section~\ref{sec: Transfer results for C_i(alpha)} we prove Theorem~\ref{thm into constant transfer}. We then prove several transfer results for the $C_i(\alpha)$ property for Laurent series fields.
    \item In Section~\ref{sec: Applications} we discuss how to apply the results from the previous sections to obtain new examples of fields which satisfy the $C^q_i$ and $C_i(\alpha)$ properties. We begin by proving Theorem~\ref{thmalpha D} and we get Theorem~\ref{thmalpha function fields} as a direct application. We then discuss the $C_i(\alpha)$ property for function fields of geometrically integral varieties over $p$-adic fields and totally imaginary number fields.
    \end{itemize}
    \noindent In the appendices we discuss various cases the $C_i^q$ properties and Kato-Kuzumaki's conjecture that are not directly related with the tools developed in this manuscript, but that are useful for us and that are interesting in themselves.
    \begin{itemize}
    \item In Appendix \ref{Appendix Local fields} we prove that local fields in positive characteristic satisfy the $C_1^1$ property. This proof also works in characteristic zero, giving a new proof of the $C_1^1$ property for $\mathbf{Q}_p$.
    \item In Appendix \ref{appendix global function fields} we explain how the arguments in \cite{Diego2018KK} together with Appendix \ref{Appendix Local fields} are enough to prove that $\mathbf{F}_p(x)$ satisfies $C_1^1$ —not only \textit{away from} $p$.
    \item In Appendix \ref{Appendix KK separably closed field} we give a fairly elementary proof of Kato-Kuzumaki's conjecture for separably closed fields.
\end{itemize}

\section*{Acknowledgments}
    \noindent The first named author would like to express his deep gratitude towards Diego Izquierdo for his support and guidance. The second named author would also like to thank Daniel Loughran for his tireless support throughout the process of writing the paper. We would also like to thank Diego Izquierdo, Daniel Loughran, and Olivier Wittenberg for their comments on the structure of the paper. We are also thankful to Kai Huang for pointing out a mistake in the original version of the manuscript.

\section{Preliminaries and notation}
\label{sec: Preliminaries and notation}
In this section we fix the notation we will use throughout this paper and discuss some important preliminary results.\par
\subsection*{Milnor \texorpdfstring{$\KK$}{K}-theory}

Let $k$ be a field and $q$ a non-negative integer. We define the $q$-th Milnor $\mathrm{K}$-group of $k$, denoted $\mathrm{K}_q(k)$, as
 \begin{equation*}
  \mathrm{K}_q(k) :=\begin{cases}
      \ZZ, &\text{ if }q=0,\\
       \left(k^{\times} \right)^{\otimes q} \; \big/ \; \left\langle a_1 \otimes \dots \otimes a_q \, | \, \exists i \neq j,\; a_i + a_j = 1 \right\rangle &\text{ otherwise,}
  \end{cases}   
 \end{equation*}
where the tensor product is taken over $\ZZ$. \par
    
For $a_1, \hdots, a_q \in k^{\times}$ we denote by $\{a_1, \hdots, a_q\}$ the class of $a_1 \otimes \hdots \otimes a_q$ in $\mathrm{K}_q(k)$. Elements of this form are called \textit{symbols}. More generally, for any pair of non-negative integers $p,q$ there is a natural pairing
    \[\{\cdot, \cdot\}: \mathrm{K}_p(k) \times \mathrm{K}_q(k) \to \mathrm{K}_{p+q}(k)\]
    induced by the tensor pairing $\left(k^{\times} \right)^{\otimes p} \times \left(k^{\times} \right)^{\otimes q} \to \left(k^{\times} \right)^{\otimes (p+q)}$. \par

Let $l/k$ be a finite extension. One can construct a norm homomorphism
    \[\mathrm{N}_{l/k}: \mathrm{K}_q(l) \to \mathrm{K}_q(k)\]
    satisfying the following properties:
    \begin{itemize}
        \item for $q = 0$, the map $\mathrm{N}_{l/k}: \KK_0(l)\to \KK_0(k)$ is given by multiplication by $[l:k]$,
        \item for $q = 1$ the map $\mathrm{N}_{l/k}: \mathrm{K}_1(l) \to \mathrm{K}_1(k)$ coincides with the usual norm $l^{\times} \to k^{\times}$,
        \item for any pair of non-negative integers $p,q$, we have $\mathrm{N}_{l/k} (\{\alpha, \beta\}) = \{\alpha, \mathrm{N}_{l/k}(\beta) \}$ for $\alpha \in \mathrm{K}_p(k)$  and $\beta \in \mathrm{K}_q(l)$, and
        \item if $m$ is a finite extension of $l$ we have $\mathrm{N}_{m/k} = \mathrm{N}_{l/k} \circ \mathrm{N}_{m/l}$.
        \item The composition $\mathrm{N}_{l/k} \circ i_{l/k}$ is given by $\alpha \mapsto \alpha^{[l:k]}$ where $i_{l/k}: \mathrm{K}_q(k) \to \mathrm{K}_q(l)$ is the natural morphism induced by the inclusion $k \subseteq l$.
    \end{itemize}
    The construction can be found in \cite[Section 1.7]{Kato1980GeneralizationClassFieldK2} or \cite[Section 7.3]{GilleSzamuely2017CentralGalois}. Usually, we denote $i_{l/k}(\alpha)$ by $\alpha|_{l}$.

\subsection*{Kato-Kuzumaki's conjecture}
    Let $Z$ be a $k$-scheme of finite type. For any non-negative integer $q$, we define the $q$\textit{-th norm group} of $Z$ as 
    \[\mathrm{N}_q(Z/k) = \left\langle \mathrm{N}_{k(x)/k}(\mathrm{K}_q(k(x))) \; | \; x \in Z_{(0)}\right\rangle \subseteq \mathrm{K}_q(k)\]
    where $Z_{(0)}$ denotes the set of closed points of $Z$ and $k(x)$ denotes the residue field of $Z$ at $x$. When $l/k$ is a finite extension we write $\mathrm{N}_q(l/k)$ for $\mathrm{N}_{q}(\Spec (l) /k)$. \par
    \begin{defi}
    The field $k$ is said to satisfy the \textit{$C_i^q$ property} if for every pair of non-negative integers $n,d$, finite extension $l/k$, and hypersurface $Z$ of $\mathbf{P}^n_l$ of degree $d$ with $d^i \leq n$ we have $\mathrm{N}_q(Z/l) = \mathrm{K}_q(l)$. 
    \end{defi}
    \noindent As it was already pointed out in the original article \cite{KatoKuzumaki1986Dimension}, the Bloch-Kato conjecture (now the norm-residue isomorphism) implies that the $C_0^q$ property is equivalent to having cohomological dimension at most $q$ (as defined in \cite[Definition~1]{KatoKuzumaki1986Dimension}). \par
    We say that a field $k$ satisfies the \textit{Kato-Kuzumaki conjecture} if $\dim k =\delta$ is finite (as defined in \cite[Definition 1]{KatoKuzumaki1986Dimension}) and satisfies $C_i^q$ for every pair of non-negative integers $i,q$ such that $\delta \leq i+q$. \par
    The following lemma corresponds to \cite[Lemma 1]{KatoKuzumaki1986Dimension}, which is stated without proof. We include a proof for completeness. We denote by $k(\ell)/k$ the maximal prime-to-$\ell$ extension i.e. the composite extension of all finite extensions of $k$ of degree prime to $\ell$.
    \begin{lemma} \label{lemma Ci0 prime to l}
        A field $k$ satisfies $C_i^0$ if and only if for every prime $\ell$, the field $k(\ell)$ is $C_i$.
    \end{lemma}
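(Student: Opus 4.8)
The plan is to reduce both implications to the numerical reformulation of the $C_i^0$ property. For a variety $Z$ over a field $F$, the norm $\mathrm{N}_{F(x)/F}\colon \mathrm{K}_0(F(x))=\mathbf{Z}\to\mathbf{Z}=\mathrm{K}_0(F)$ is multiplication by $[F(x):F]$, so $\mathrm{N}_0(Z/F)$ is the ideal of $\mathbf{Z}$ generated by the degrees of the closed points of $Z$; hence $\mathrm{N}_0(Z/F)=\mathrm{K}_0(F)$ if and only if the index $\mathrm{ind}(Z):=\gcd_{x\in Z_{(0)}}[F(x):F]$ equals $1$. Thus $k$ is $C_i^0$ precisely when every hypersurface in the allowed range over every finite extension of $k$ has index $1$, and $\mathrm{ind}(Z)=1$ if and only if for every prime $\ell$ the variety $Z$ carries a closed point of degree prime to $\ell$. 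I would then record three standard facts about the prime-to-$\ell$ extension: (i) if $l/k$ is finite of degree prime to $\ell$ then $l(\ell)=k(\ell)$, and for any finite $k'/k$ one has $k(\ell)\subseteq k'(\ell)$ with $k'(\ell)/k(\ell)$ algebraic, both by choosing compatible Sylow pro-$\ell$ subgroups of the relevant absolute Galois groups; (ii) the $C_i$ property passes to arbitrary algebraic extensions, since it is stable under finite extensions by \cite[Theorem~2a]{Nagata1957NotesPaperLang} and a hypersurface over an algebraic extension is already defined over a finite subextension; (iii) every finite separable extension of $k(\ell)$ has $\ell$-power degree, so closed points of a $k(\ell)$-variety have degree of the form $\ell^{a}p^{b}$ with $\ell^{a}$ the separable degree.

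For the implication ($\Leftarrow$), assume $k(\ell)$ is $C_i$ for every $\ell$. Given a finite $k'/k$ and a hypersurface $Z\subseteq\mathbf{P}^n_{k'}$ with $d^i\le n$, fix a prime $\ell$. The compositum $k(\ell)k'$ is finite over $k(\ell)$, hence $C_i$ by fact (ii); applying $C_i$ to $Z_{k(\ell)k'}$ produces a point of $Z$ defined over some finite subextension $k''/k'$ of $k(\ell)k'/k'$, and $[k'':k']$ divides $[mk':k']$ for a finite $m\subseteq k(\ell)$ with $[m:k]$ prime to $\ell$, hence is prime to $\ell$. This yields a closed point of $Z$ of degree prime to $\ell$; letting $\ell$ vary gives $\mathrm{ind}(Z)=1$, i.e. $\mathrm{N}_0(Z/k')=\mathrm{K}_0(k')$, so $k$ is $C_i^0$.

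For ($\Rightarrow$), assume $k$ is $C_i^0$. Because the defining condition quantifies over all finite extensions, descending any hypersurface to a field of definition finitely generated over $k$ shows that $C_i^0$ is inherited by every algebraic extension of $k$; in particular $k(\ell)$, and even its perfect closure $k(\ell)^{\mathrm{perf}}$, is $C_i^0$. Fix $\ell$ and a hypersurface $Z$ over $k(\ell)$ in the allowed range, so $\mathrm{ind}(Z)=1$. If $\mathrm{char}\,k=0$ or $\ell=\mathrm{char}\,k$, then by (iii) all residue degrees are $\ell$-powers, so $\mathrm{ind}(Z)=1$ forces a degree-$1$, i.e. $k(\ell)$-rational, point and $k(\ell)$ is $C_i$. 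The remaining case $\mathrm{char}\,k=p>0$ with $\ell\neq p$ is the main obstacle: now the separable ($\ell$-power) and inseparable ($p$-power) parts of the residue degrees are coprime, so $\mathrm{ind}(Z)=1$ only supplies a purely inseparable closed point of $p$-power degree alongside a separable point of $\ell$-power degree, with no evident rational point. I would handle this by working over the perfect field $k(\ell)^{\mathrm{perf}}$, which is $C_i^0$ with pro-$\ell$ absolute Galois group and over which all residue degrees are $\ell$-powers, so the argument above shows $k(\ell)^{\mathrm{perf}}$ is $C_i$; the remaining step is to descend the $C_i$ property along the purely inseparable extension $k(\ell)^{\mathrm{perf}}/k(\ell)$, equivalently to promote the prime-to-$\ell$ point supplied by $C_i^0$ to one with separable residue field. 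This purely inseparable descent is the delicate point, and I expect it to require a Frobenius-twist argument exploiting that each finite stage $k(\ell)^{1/p^m}$ is abstractly isomorphic to $k(\ell)$, in place of the purely formal index manipulation that suffices in the separable case.
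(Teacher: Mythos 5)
Your reduction of $C_i^0$ to the statement that every hypersurface in range has index $1$ is exactly the paper's implicit starting point, and your direction ($\Leftarrow$) follows the paper's argument: for each prime $\ell$ one produces a point of $Z$ over a finite extension of $k'$ of degree prime to $\ell$ by invoking the $C_i$ property of a prime-to-$\ell$ extension. One caution there: your justification via the compositum $k(\ell)k'$ is shaky, because $[mk':k']$ need not divide $[m:k]$ (take $m,k'$ generated by two roots of $x^3-2$ over $\QQ$: the compositum has degree $2$ over $k'$ while $[m:k]=3$), and more to the point $k(\ell)k'$ need not be a prime-to-$\ell$ extension of $k'$ for a fixed choice of Sylow. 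The paper sidesteps this by working with $l(\ell')$, a prime-to-$\ell'$ extension of the finite extension $l$ itself, which contains a conjugate of $k(\ell')$ and is therefore $C_i$ by Lang--Nagata; its finite subextensions have degree prime to $\ell'$ by construction. Your argument should be rerouted the same way.

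The direction ($\Rightarrow$) is where your proposal has a genuine, self-acknowledged gap: in characteristic $p$ with $\ell\neq p$ you reduce everything to descending the $C_i$ property along the purely inseparable extension $k(\ell)^{\mathrm{perf}}/k(\ell)$ and leave that step as an expectation. As described, the Frobenius-twist idea does not close it: a point of $\{F=0\}$ over $k^{1/p^m}$ only yields a $k$-point of the twisted hypersurface $\{F^{(p^m)}=0\}$, not of $\{F=0\}$, so ``$k^{\mathrm{perf}}$ is $C_i$'' does not formally give ``$k$ is $C_i$''. The paper's proof avoids the issue entirely through the construction of $k(\ell)$: it is taken to be a \emph{maximal} prime-to-$\ell$ algebraic extension (this is what the phrase ``by construction of $k(\ell)$ this ensures that $l_j\subseteq k(\ell)$'' is relying on). When $\ell\neq p$ every purely inseparable extension has $p$-power degree, hence is prime to $\ell$, so maximality forces $k(\ell)$ to be perfect; then every finite extension of $k(\ell)$ has $\ell$-power degree, and index $1$ immediately produces a rational point --- which is precisely your easy case. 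So the correct fix is not an inseparable descent but the right normalization of $k(\ell)$; under your Sylow-fixed-field reading the descent you flag would genuinely be needed and is not supplied.
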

    \begin{proof}
        Assume that $k$ satisfies $C_i^0$. Let $\ell$ be a prime number. For every hypersurface $Z \subseteq \mathbf{P}^n_{k(\ell)}$ there exists a finite extension $l/k$ such that $Z$ is defined over $l$ and $l\subset k(\ell)$. If the degree $d$ of $Z$ satisfies $d^i\leq n$ we can find extensions $l_1,\hdots , l_m/l$ such that $Z(l_j)\neq \emptyset$ and $\mathrm{gcd}([l_1:l],\hdots , [l_m:l]) = 1$. In particular, there exists an index $j$ such that $[l_j:l]$ is coprime with $\ell$. By construction of $k(\ell)$ this ensures that $l_j \subseteq k(\ell)$ and thus, $Z(k(\ell)) \neq \emptyset$. \par
        Conversely, assume that for every prime number $\ell$ the field $k(\ell)$ is $C_i$. Let $l/k$ be a finite extension and $Z\subseteq \mathbf{P}^n_l$ a hypersurface of degree $d$ such that $d^i \leq n$. Let $m/l$ be a finite extension such that $Z(m)\neq \emptyset$. Since $k(\ell)$ is $C_i$ for every prime number $\ell$, for any prime number $\ell'$ dividing $[m:l]$ we can find a finite subextension $m_{\ell'}$ of $l(\ell')/l$ such that $Z(m_{\ell'})\neq\emptyset$. In particular, since $[m_{\ell'}:l]$ is prime to $\ell'$, we have that $[m:l]$ and $\left\{[m_{\ell}:l] \; | \; \ell \text{ divides } [m:l] \right\}$ are coprime setwise.
    \end{proof}
    \noindent More generally, Lemma~\ref{lemma Ci0 prime to l} generalises to the $C_i^q$ properties with $q \neq 0$. A proof can be found in \cite[Lemma 2.5]{DiegoLuco2023TransferSerreII}.

\subsection*{Milnor \texorpdfstring{$\KK$}{K}-theory of Henselian fields}
    Let $K$ be a Henselian discrete valuation field with ring of integers $R$, maximal ideal $\mathfrak{m}$ and residue field $k$. Then, for every strictly positive integer $q$ there exists a unique morphism
    \[ \partial : \mathrm{K}_q(K) \to \mathrm{K}_{q-1}(k)\]
    such that for every uniformiser $\pi$ of $R$ and units $u_2,\hdots u_q \in R^{\times}$ we have
    \[\partial(\{\pi, u_2, \hdots , u_q \}) = \{\overline{u_2}, \hdots , \overline{u_q}\}\]
    where $\overline{u_2}, \hdots , \overline{u_q}$ denotes the images of $u_2,\hdots u_q$ in the residue field. We call this map the \textit{residue map}. \par
    We denote the kernel of $\partial: \mathrm{K}_q(K) \to \mathrm{K}_{q-1}(k)$ by $\mathrm{U}_q(K)$. This group is generated by symbols of the form $\{u_1 , \hdots ,u_q \}$ where $u_1, \hdots ,u_q$ are units in $R$, see \cite[Proposition~7.1.7]{GilleSzamuely2017CentralGalois}. One can also define a specialisation map $s: \mathrm{U}_q(K) \to \mathrm{K}_{q}(k)$ characterised by $s(\{u_1 , \hdots ,u_q \}) = \{\overline{u_1}, \hdots , \overline{u_q}\}$, see \cite[Proposition 7.1.4]{GilleSzamuely2017CentralGalois}. Denote by $\mathrm{U}_q^1(K)$ the kernel of $s$. It is generated by symbols of the form $\{u, a_2, \hdots, a_q\}$ where $u$ belongs to $1 + \mathfrak{m}$ and $a_2,\hdots, a_q\in K^{\times}$, see \cite[Proposition~7.1.7]{GilleSzamuely2017CentralGalois}. \par
    Moreover, the residue map and specialisation map are compatible with the norm map. Indeed, if $L/K$ is a finite extension with ramification index $e$ and the residue field of $L$ is $l$, we have the following commutative diagrams whose horizontal arrows are isomorphisms
    \begin{eqnarray*}
        \begin{tikzcd}
            \mathrm{K}_q(L)/ \mathrm{U}_q(L) \ar[r, "\sim" ', "\partial_L"]  \ar[d,"\mathrm{N}_{L/K}"]& \mathrm{K}_{q-1}(l) \ar[d,"\mathrm{N}_{l/k}"] & \mathrm{U}_q(L)/ \mathrm{U}_q^1(L) \ar[r, "\sim" ', "s_L"]  \ar[d,"\mathrm{N}_{L/K}"]& \mathrm{K}_{q}(l) \ar[d,"e \cdot \mathrm{N}_{l/k}"] \\
            \mathrm{K}_q(K)/ \mathrm{U}_q(K) \ar[r, "\sim" ', "\partial_K"] & \mathrm{K}_{q-1}(k)  & \mathrm{U}_q(K)/ \mathrm{U}_q^1(K) \ar[r, "\sim" ', "s_K"]  & \mathrm{K}_{q}(k).
        \end{tikzcd}
    \end{eqnarray*}
For every prime number $\ell$ different from the characteristic of $k$, the group $\mathrm{U}_q^1(K)$ is $\ell$-divisible. Furthermore, in the equicharacteristic case, when $L/K$ is an unramified separable extension we have the following:
\begin{lemma}\label{lem: Z has K^sep point implies U^1(K) norm of Z}
Let $l/k$ be a finite separable extension and $q\in\ZZ_{\geq 1}$. Fix $K:= k\lau{t}$ and $L:=l\lau{t}$ then $\mathrm{U}_q^1(K)\subseteq\mathrm{N}_q(L/K)$.
\end{lemma}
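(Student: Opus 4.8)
The plan is to reduce the statement to the surjectivity of the norm map onto the group of principal units, and then to establish the latter by successive approximation along the $t$-adic filtration.

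First I would record that $L/K$ is unramified: the element $t$ is a uniformiser of both $K = K_0\lau{t}$ and $L = L_0\lau{t}$, so the ramification index is $1$ and the residue extension is $L_0/K_0$. Recall that $\mathrm{U}_q^1(K)$ is generated by symbols $\{u, a_2, \ldots, a_q\}$ with $u \in 1 + \mathfrak{m}$ (the maximal ideal of the valuation ring of $K$) and $a_2, \ldots, a_q \in K^{\times}$, and that $\mathrm{N}_q(L/K) = \mathrm{N}_{L/K}(\mathrm{K}_q(L))$ is a subgroup of $\mathrm{K}_q(K)$, in particular stable under $x \mapsto -x$. Suppose we knew that every $u \in 1 + \mathfrak{m}$ could be written as $u = \mathrm{N}_{L/K}(v)$ for some $v \in L^{\times}$. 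Then, using graded-commutativity together with the projection formula $\mathrm{N}_{L/K}(\{\alpha|_L, \beta\}) = \{\alpha, \mathrm{N}_{L/K}(\beta)\}$ applied to $\alpha = \{a_2, \ldots, a_q\}$ and $\beta = v$, we would obtain
\[ \{u, a_2, \ldots, a_q\} = \pm \{a_2, \ldots, a_q, \mathrm{N}_{L/K}(v)\} = \pm\, \mathrm{N}_{L/K}(\{a_2|_L, \ldots, a_q|_L, v\}) \in \mathrm{N}_q(L/K). \]
Since these symbols generate $\mathrm{U}_q^1(K)$, this would already give the desired inclusion.

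It remains to prove $1 + \mathfrak{m} \subseteq \mathrm{N}_{L/K}(L^{\times})$. Writing $R_K = K_0[[t]]$ and $R_L = L_0[[t]] = R_K \otimes_{K_0} L_0$, a $K_0$-basis of $L_0$ is simultaneously a $K$-basis of $L$, so for $b \in L_0$ and $n \geq 1$ I would compute
\[ \mathrm{N}_{L/K}(1 + bt^n) = \det(1 + t^n M_b) \equiv 1 + \mathrm{Tr}_{L_0/K_0}(b)\, t^n \pmod{t^{n+1}}, \]
where $M_b$ is the matrix of multiplication by $b$ on $L_0$ over $K_0$ (whose entries lie in $K_0$). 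Thus on the graded piece $(1 + \mathfrak{m}^n)/(1 + \mathfrak{m}^{n+1}) \cong K_0$, for $n \geq 1$, the norm induces the trace $\mathrm{Tr}_{L_0/K_0}\colon L_0 \to K_0$. Since $L_0/K_0$ is separable, this trace is surjective. Given $u = 1 + \sum_{n \geq 1} c_n t^n \in 1 + \mathfrak{m}$, I would then inductively select $b_n \in L_0$ so that the partial products $v_N := \prod_{n=1}^N (1 + b_n t^n)$ satisfy $\mathrm{N}_{L/K}(v_N) \equiv u \pmod{t^{N+1}}$, the surjectivity of the trace matching the next coefficient at each step. As $L$ is complete, the sequence $(v_N)$ converges to some $v \in 1 + \mathfrak{m}_L$, and by continuity of the norm $\mathrm{N}_{L/K}(v) = u$.

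The crux of the argument, and the only place where separability of $L_0/K_0$ enters, is the surjectivity of the residue trace $\mathrm{Tr}_{L_0/K_0}$, which powers the successive approximation; the reduction via the projection formula is purely formal. In positive characteristic an inseparable residue extension has vanishing trace, so the conclusion genuinely fails without the separability hypothesis, which is why it appears in the statement.
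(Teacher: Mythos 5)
Your proposal is correct and follows essentially the same route as the paper: reduce via the generators of $\mathrm{U}_q^1(K)$ and the projection formula to showing $1+t K_0[\![t]\!]\subseteq \mathrm{N}_{L/K}(L^{\times})$, then build the preimage by successive approximation, using surjectivity of the trace $\mathrm{Tr}_{L_0/K_0}$ (separability) to match each coefficient and completeness to conclude. The only cosmetic differences are that you compute the norm of $1+bt^n$ via the determinant of the multiplication matrix where the paper sums over embeddings, and that you spell out the projection-formula reduction slightly more explicitly.
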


\begin{proof}
By \cite[Proposition 7.1.7]{GilleSzamuely2017CentralGalois} the group $\mathrm{U}_q^1(K)$ is generated by symbols of the form $\{x_1,\hdots , x_n\}$, where $x_1\in 1+t k[\![t]\!]$. Thus it suffices to prove that $1+t k[\![t]\!]$ is contained in $\mathrm{N}_{L/K}(L^{\times})$. Let $\alpha=1+\sum^\infty_{i=1}a_it^i$ be an element of $1+t k[\![t]\!]$. We produce by induction a sequence of elements $\{\beta_n\}_{n\in\NN}\subset k[\![t]\!]$ satisfying
\begin{align*}
    \mathrm{N}_{L/K}(\beta_n) & \equiv \alpha \;\, \mod t^{n+1},\\
    \beta_{n+1}& \equiv \beta_{n}  \mod t^{n+1}.
\end{align*}
In particular by completeness we will obtain an element $\beta\in L$ such that $\operatorname{N}_{L/K}(\beta)= \alpha$. We set $\beta_0= 1$. Now assume $\beta_n$ has been constructed for all $n<N$. By construction we have
\begin{equation*}
    \frac{\alpha}{\mathrm{N}_{L/K}(\beta_{N-1})}\equiv 1+b_{N}t^N\mod t^{N+1},
\end{equation*}
for some $b_N\in k$ (since $\frac{\alpha}{\operatorname{N}_{L/K}(\beta_{N-1})} \equiv 1 \mod t^N$). Now since $l/k$ is separable, the trace map is surjective and hence there exists $c_n\in l$ such that $\operatorname{tr}(c_N)=b_N$. Thus we set 
\begin{equation*}
    \beta_N:=\beta_{N-1}\left(1+c_Nt^N\right).
\end{equation*}
Clearly $\beta_N\equiv\beta_{N-1}\mod t^N$, and by definition we have
\begin{equation*}
   \operatorname{N}_{L/K}\left(1+c_Nt^N\right)\equiv 1+\sum_{\sigma: L \hookrightarrow \overline{K}}\sigma(c_N)t^N\equiv 1+\operatorname{tr}(c_N)t^N\equiv1+b_Nt^N\mod t^{N+1}, 
\end{equation*}
where the sum is over all $K$-embeddings of $L$ in an algebraic closure of $K$. Therefore we have
\begin{equation*}
    \frac{\alpha}{\operatorname{N}_{L/K}(\beta_N)} \equiv \frac{\alpha}{\operatorname{N}_{L/K}(\beta_{N-1})\operatorname{N}_{L/K}\left(1+c_Nt^N\right)}\equiv 1\mod t^{N+1},
\end{equation*}
and hence by the principle of induction the result follows.
\end{proof}

\noindent In particular, if $K$ is equicharacteristic and a variety $Z$ over $K$ has a point over $k^sK$ where $k^s$ is the separable closure of $k$, then we have $\mathrm{U}_q^1(K)\subseteq\operatorname{N}_q(Z/K)$. 

\begin{rmk}
    Lemma \ref{lem: Z has K^sep point implies U^1(K) norm of Z} fails if $L/K$ is replaced by an Artin-Schreider extension. Indeed, set $K_0$ to be a finite field and $L$ the extension $K(\alpha)$ where $\alpha$ is a root of $X^p - X - t^{-1}$. From \cite[XIV~\S~5 Corollary to Proposition~15]{Serre1979LocalFields} one can deduce that an element of the form $1+ c t$ is a norm of $L/K$ if and only if $\operatorname{Tr}_{K_0/\mathbf{F}_p}(c) = 0$.
\end{rmk}
\subsection*{Milnor \texorpdfstring{$\KK$}{K}-theory of rational function fields}  \label{sec Milnor rational functions} 

Let $K := k(x)$ be the field of rational functions over a field $k$. Every closed point $p$ of the affine line $\mathbf{A}_k^1$ corresponds to a discrete valuation over $K$. Let $k(p)$ be the residue field of $p$ and $K_p$ the completion of $K$ with respect to that valuation. We denote by $\partial_p: \mathrm{K}_{q}(K) \to \mathrm{K}_{q-1}(k(p))$ the composition of the natural map induced by the inclusion $K \hookrightarrow K_p$ and the residue map described above. Then, for every $q \geq1$ there is an exact sequence 
    \[0 \to \mathrm{K}_{q}(k) \to \mathrm{K}_{q}(K) \xrightarrow{(\partial_p)} \bigoplus_{p \in \mathbf{A}^1_{k,(0)}} \mathrm{K}_{q-1}(k(p)) \to 0,\]
    see \cite[Theorem 7.2.1]{GilleSzamuely2017CentralGalois}. We refer to this exact sequence as \textit{Milnor's exact sequence}. \par
    We also note that the map $(\partial_p)$ is compatible with finite extensions of $k$. Indeed, for every finite extension $l/k$ we have the following commutative diagram
    \begin{equation*}
        \begin{tikzcd}
            \mathrm{K}_{q}(lK) \ar[r] \ar[d, "\mathrm{N}_{lK/K}"] & \bigoplus_{p' \in \mathbf{A}^1_{l,(0)}} \mathrm{K}_{q-1}(l(p')) \ar[d,"\prod_{p'}\mathrm{N}_{p'/p}"] \\
            \mathrm{K}_{q}(K) \ar[r]  & \bigoplus_{p \in \mathbf{A}^1_{k,(0)}} \mathrm{K}_{q-1}(k(p)).
        \end{tikzcd}
    \end{equation*}
    The $p$-coordinate of $\prod_{p'}\mathrm{N}_{p'/p}((\alpha_{p'})_{p'})$ is given by $\prod_{p'\in \pi^{-1}(p)} \mathrm{N}_{l(p')/k(p)}(\alpha_{p'})$, where $\pi:~\mathbf{A}^1_l \to \mathbf{A}^1_k$ is the natural projection, see \cite[Corollary 7.4.3]{GilleSzamuely2017CentralGalois}.

\section{Intersection of hypersurfaces}
\label{sec: Intersection of hypersurfaces}
    In this section we discuss the behaviour of intersections of hypersurfaces over fields satisfying $C_i^q$. From now on, we fix an algebraic closure $\overline{k}$ of the field $k$ and every finite extension of $k$ will be considered as a subextension of $\overline{k}/k$.

    \subsection{Lang's construction and \texorpdfstring{$l$}{l}-normic forms}
    \label{subsec: Lang's construction}
    \noindent In this subsection we recall a construction due to Lang in \cite{Lang1952QuasiAlgebraicClosure}. \par
    Let $N,n$ and $r$ be integers such that $N-1 \geq r \geq 1$. Let $\Phi \in k[X_0, \hdots , X_N]$ be a homogeneous polynomial of degree $e$, and $F_1, \hdots, F_r \in k[X_0, \hdots , X_n]$ homogeneous polynomials of degree $d$. For every integer $\mu \geq 2$ we define inductively a homogeneous polynomial $\Phi^{(\mu)} \in k[X_0, \hdots ,X_{N_{\mu}-1}]$ of degree $D_{\mu}$, where $D_{\mu} =  e d^{\mu - 1}$ and $N_{\mu} = (n+1)\left\lfloor \frac{N_{\mu -1}}{r}\right\rfloor$. The base case is defined by $\Phi^{(1)} = \Phi$ and $N_1 = N+1$. Denote by $\underline{F}(X_0,\hdots,X_n)$ the vector $(F_1(X_0,\hdots , X_n),\hdots ,F_r(X_0, \hdots, X_{n}))$ and similarly for other sets of variables. In general, we define $\Phi^{(\mu)}$ as  
        \begin{multline*}
            \Phi^{(\mu)}(X_0, \hdots, X_{N_{\mu}-1}) = \Phi^{(\mu-1)}(\underline{F}(X_0,\hdots , X_n),\underline{F}(X_{n+1}, \hdots, X_{2n+1}),\hdots, \\  \hdots, \underline{F}(X_{N_{\mu}-n -1},\hdots X_{N_{\mu}-1}), 0, \hdots, 0).
        \end{multline*}
    Let $Z_1, \hdots Z_r\subseteq \P^n_k$ be the hypersurfaces defined by $F_1, \hdots, F_r$ respectively. We denote by $\Phi^{(\mu)}(Z_1, \hdots , Z_{r})$ the hypersurface of $\P^{N_{\mu}-1}_k$ defined by $\Phi^{(\mu)}$ and by $Z_{\Phi}$ the hypersurface of $\P^{N}_k$ defined by $\Phi = 0$. The main property of this construction is the following: if the intersection $Z \coloneq \bigcap_{i=1}^r Z_i$ and $Z_{\Phi}$ have no rational point, the hypersurface $\Phi^{(\mu)}(Z_1, \hdots , Z_{r})$ has no rational point either. The following proposition is just a rephrasing of this property.
        \begin{prop}\label{prop higher Lang's construction}
            Keep the previous notation. Let $l/k$ be a finite extension and $\mu \in~\mathbf{N}$. If $\Phi^{(\mu)}(Z_1,\hdots ,Z_r)$ admits an $l$-point, then either $Z(l)$ is non-empty or $Z_{\Phi}(l)$ is non–empty. 
        \end{prop}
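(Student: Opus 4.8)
The plan is to prove the statement by induction on $\mu$, keeping the data $\Phi, F_1, \ldots, F_r$ and the extension $l/k$ fixed throughout; the proposition is exactly the contrapositive of the ``main property'' of Lang's construction recalled above, so the induction simply unwinds one layer of the recursion at a time. Since all the forms involved have coefficients in $k \subseteq l$, an $l$-point of any of these hypersurfaces is nothing but a nonzero coordinate vector over $l$ annihilated by the defining form, so I would work directly with such vectors and there is no base-change subtlety to address. For the base case $\mu = 1$ we have $N_1 = N+1$ and $\Phi^{(1)} = \Phi$, so $\Phi^{(1)}(Z_1, \ldots, Z_r)$ is literally the hypersurface $Z_\Phi \subseteq \mathbf{P}^N_l$; thus an $l$-point of the former is an $l$-point of $Z_\Phi$ and the conclusion is immediate.

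For the inductive step, assume the statement for $\mu - 1$ and let $a = (a_0, \ldots, a_{N_\mu - 1}) \in l^{N_\mu} \setminus \{0\}$ satisfy $\Phi^{(\mu)}(a) = 0$. Writing $m = \lfloor N_{\mu-1}/r \rfloor$, so that $N_\mu = (n+1)m$, I would partition the coordinates of $a$ into the $m$ consecutive blocks $a^{(j)} := (a_{(n+1)j}, \ldots, a_{(n+1)j + n}) \in l^{n+1}$ for $0 \leq j \leq m-1$. The defining recursion for $\Phi^{(\mu)}$ then reads $\Phi^{(\mu)}(a) = \Phi^{(\mu-1)}(b)$, where $b := (\underline{F}(a^{(0)}), \ldots, \underline{F}(a^{(m-1)}), 0, \ldots, 0) \in l^{N_{\mu-1}}$ is obtained by feeding each block through $\underline{F} = (F_1, \ldots, F_r)$ and padding with zeros. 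The argument now splits according to whether some block already solves the problem directly: if for some $j$ the block $a^{(j)}$ is nonzero and satisfies $F_1(a^{(j)}) = \cdots = F_r(a^{(j)}) = 0$, then $[a^{(j)}]$ is an $l$-point of $Z = \bigcap_{i=1}^{r} Z_i$ and we are done.

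In the remaining case no block is a nonzero common zero of the $F_i$, and I claim $b$ is then a genuine $l$-point of $\Phi^{(\mu-1)}(Z_1, \ldots, Z_r)$, which by the inductive hypothesis forces $Z(l) \neq \emptyset$ or $Z_\Phi(l) \neq \emptyset$, completing the induction. The equality $\Phi^{(\mu-1)}(b) = 0$ is immediate from the recursion, so the only real content — and the single place the case distinction is used — is to check that $b \neq 0$. This is where I expect the one genuine subtlety to lie: if $b$ vanished, then $\underline{F}(a^{(j)}) = 0$ for every $j$, so in the present case each block $a^{(j)}$ would have to be zero, whence $a = 0$, contradicting the choice of $a$. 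Hence $b \neq 0$, the inductive hypothesis applies to $b$, and the proof concludes.
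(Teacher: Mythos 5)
Your proof is correct. The paper states this proposition without proof, presenting it as the defining property of Lang's construction (going back to \cite{Lang1952QuasiAlgebraicClosure}), and your induction on $\mu$ --- splitting into the case where some block $a^{(j)}$ is a nonzero common zero of the $F_i$ (giving a point of $Z$) and the case where the assembled vector $b$ is nonzero (allowing the inductive hypothesis to apply) --- is exactly the standard argument that the paper leaves implicit; the one genuinely delicate point, checking $b \neq 0$, is handled correctly.
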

        \noindent Another elementary yet relevant property of this construction is the following:
        \begin{prop}\label{prop Langs construction tending to infty}
            Keep the previous notation. Let $i \geq 1$ be an integer. Suppose that $r d^i \leq n$. Then the quantity $N_{\mu} / D_{\mu}^i$ goes to infinity as $\mu$ does.
        \end{prop}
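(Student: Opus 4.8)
The plan is to reduce the statement to a comparison of two geometric growth rates. Since $D_\mu = e d^{\mu-1}$, we have $D_\mu^i = e^i d^{i(\mu-1)}$, so $D_\mu^i$ grows geometrically with ratio $d^i$. Setting $c := (n+1)/r$, the recursion $N_\mu = (n+1)\lfloor N_{\mu-1}/r\rfloor$ says that, up to the error from the floor, $N_\mu$ grows geometrically with ratio $c$. The hypothesis $rd^i \le n$ gives $d^i \le n/r < (n+1)/r = c$, so the growth ratio of $N_\mu$ strictly exceeds that of $D_\mu^i$; once the floor is controlled, this forces $N_\mu/D_\mu^i \to \infty$. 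We may assume $d \ge 2$: when $d = 1$ the $F_j$ are linear, so $Z = \bigcap_{j=1}^r Z_j$ is an intersection of $r \le n$ hyperplanes in $\mathbf{P}^n$, a nonempty linear subspace with an $l$-point over every $l$, whence Proposition~\ref{prop higher Lang's construction} holds trivially and the present growth statement is never invoked.

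First I would show $N_\mu \to \infty$. Since $d \ge 2$ we have $d^i \ge 2$, hence $r \le n/d^i \le n/2$, i.e. $n+1 \ge 2r+1$. I claim the map $g(x) := (n+1)\lfloor x/r\rfloor$ satisfies $g(x) > x$ for every real $x \ge r$: writing $k = \lfloor x/r\rfloor \ge 1$ we have $x < (k+1)r$ while $g(x) = (n+1)k \ge (2r+1)k = 2rk+k$, and $2rk + k - (k+1)r = r(k-1)+k > 0$ for $k \ge 1$, so $g(x) > (k+1)r > x$. As $N_1 = N+1 \ge r+2 \ge r$ and $g$ maps $[r,\infty)$ into $[n+1,\infty) \subseteq [r,\infty)$, the sequence $(N_\mu)$ is a strictly increasing sequence of integers and therefore tends to infinity.

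Then I would upgrade this to the sharp geometric rate. Fix any $\lambda$ with $d^i < \lambda < c$; this is possible since $d^i < c$. Because $\lambda < (n+1)/r$ we have $1/r - \lambda/(n+1) > 0$, so for all sufficiently large $x$ one has $\lfloor x/r\rfloor \ge x/r - 1 \ge \frac{\lambda}{n+1}x$. As $N_\mu \to \infty$, there is $\mu_0$ such that $N_{\mu-1}$ lies in this range for all $\mu > \mu_0$, whence $N_\mu = (n+1)\lfloor N_{\mu-1}/r\rfloor \ge \lambda N_{\mu-1}$, and iterating gives $N_\mu \ge \lambda^{\mu-\mu_0}N_{\mu_0}$. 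Dividing by $D_\mu^i = e^i d^{i(\mu-1)}$ yields
\[ \frac{N_\mu}{D_\mu^i} \;\ge\; \frac{\lambda^{\mu-\mu_0} N_{\mu_0}}{e^i d^{i(\mu-1)}} \;=\; \mathrm{const}\cdot\Bigl(\frac{\lambda}{d^i}\Bigr)^{\mu}, \]
and since $\lambda/d^i > 1$ the right-hand side tends to infinity.

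The only real obstacle is the floor function: it blocks a one-line geometric estimate and, more seriously, could in principle let $(N_\mu)$ stall at small values, which is precisely why the monotonicity step (using $n+1 \ge 2r$) must precede the asymptotic estimate. Once $N_\mu$ is guaranteed to be large, the subtracted $1$ in $\lfloor x/r\rfloor \ge x/r - 1$ becomes negligible and the true ratio $c = (n+1)/r$, which by $rd^i \le n$ strictly exceeds $d^i$, takes over; everything else is bookkeeping of the two geometric sequences.
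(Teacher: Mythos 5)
Your proof is correct and its core is the same as the paper's: both arguments rest on the observation that the recursion multiplies $N_\mu$ by roughly $(n+1)/r$ while $D_\mu^i$ is multiplied by $d^i$, and $rd^i\leq n$ makes the quotient of these ratios exceed $1$. The paper simply computes
\[\frac{N_{\mu}}{D_{\mu}^i}\cdot\frac{D_{\mu-1}^i}{N_{\mu-1}}=\frac{n+1}{rd^i}\cdot\frac{\lfloor N_{\mu-1}/r\rfloor}{N_{\mu-1}/r}\]
and asserts that this tends to $(n+1)/(rd^i)>1$. That assertion tacitly uses $N_{\mu}\to\infty$ (otherwise the floor-correction factor need not tend to $1$), which the paper does not justify; your preliminary step showing that $g(x)=(n+1)\lfloor x/r\rfloor$ satisfies $g(x)>x$ on $[r,\infty)$ once $n+1\geq 2r+1$ supplies exactly this missing ingredient, and your choice of an intermediate ratio $\lambda\in(d^i,(n+1)/r)$ then makes the ``grows like a geometric progression'' conclusion fully rigorous. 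Your caveat about $d=1$ is also substantive rather than cosmetic: for instance with $d=1$, $i=1$, $r=n=3$, $N=4$ one gets $N_1=5$, $N_2=N_3=\cdots=4$, so the sequence stalls and the statement as literally written fails; some restriction such as $d\geq 2$ (harmless in all applications, as you note, since for $d=1$ the intersection of $r\leq n$ hyperplanes already has points everywhere) is genuinely needed. In short, your route is the paper's route carried out carefully, and it repairs a real gap in the published argument.
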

    \noindent One of the key ideas of the proof of \cite[Theorem 1a]{Nagata1957NotesPaperLang} is to use the previous construction with $\Phi$ a \textit{normic form}. In our context, normic forms in the sense of \cite{Lang1952QuasiAlgebraicClosure} are not enough to prove the main result of the section. 
    \begin{defi}\label{def l-normic forms}
        Let $l/k$ be a finite extension. We say that a hypersurface $H \subseteq \P_k^n$ is an $l$-normic form if it admits a point in a finite extension $k'/k$ if and only if $l\subseteq k'$.
    \end{defi}
    \noindent Note that the previous definition only makes sense because we fixed an algebraic closure $\overline{k}$ of $k$ and we consider all finite extension as subextensions of $\overline{k}$.
    \begin{prop}\label{prop existence l-normic}
        Let $l/k$ be a finite normal simple extension. Then there exists an $l$-normic form of arbitrarily large dimension. 
    \end{prop}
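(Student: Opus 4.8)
The plan is to construct an explicit sequence of $l$-normic forms of strictly increasing dimension by iterated substitution, in the spirit of the construction of Subsection~\ref{subsec: Lang's construction}, starting from the binary form attached to the minimal polynomial of a primitive element. First I would fix a primitive element $l = k(\theta)$ (which exists since $l/k$ is simple) and let $f \in k[T]$ be its minimal polynomial, of degree $d = [l:k]$, taken monic. Homogenising, set $g(X_0, X_1) := X_1^d f(X_0/X_1)$, a binary form of degree $d$. The base case is to check that $g$ is $l$-normic in the sense of Definition~\ref{def l-normic forms}: a nontrivial zero $(x_0, x_1) \in (k')^2$ of $g$ forces $x_1 \neq 0$ (since $g(x_0, 0) = x_0^d$), hence gives a root $x_0/x_1 \in k'$ of $f$, while conversely a root of $f$ in $k'$ gives such a zero. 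Thus $\{g = 0\} \subseteq \P^1_{k}$ has a $k'$-point if and only if $f$ has a root in $k'$. Here is where normality enters: any root $\alpha \in k'$ of $f$ generates a subfield $k(\alpha) \subseteq \overline{k}$ that is the image of a $k$-embedding $l \hookrightarrow \overline{k}$, and since $l/k$ is normal this image is $l$ itself; so $f$ has a root in $k'$ exactly when $l \subseteq k'$, and $g$ is $l$-normic.

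To increase the dimension while preserving the normic property, I would iterate $g$ against itself. Define $g_1 := g$ and, for $m \geq 2$, on two disjoint sets of variables $\underline{X}, \underline{Y}$ of size $2^{m-1}$ each, set
\[ g_m(\underline{X}, \underline{Y}) := g\bigl(g_{m-1}(\underline{X}),\, g_{m-1}(\underline{Y})\bigr), \]
a nonzero homogeneous form of degree $d^m$ in $2^m$ variables. The claim to prove by induction on $m$ is that each $g_m$ is $l$-normic, i.e.\ $\{g_m = 0\} \subseteq \P^{2^m - 1}_k$ has a $k'$-point if and only if $l \subseteq k'$; since the ambient dimension $2^m - 1 \to \infty$, this yields $l$-normic forms of arbitrarily large dimension.

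For the inductive step, writing $A := g_{m-1}(\underline{x})$ and $B := g_{m-1}(\underline{y})$ for a hypothetical nontrivial $k'$-zero $(\underline{x}, \underline{y})$ of $g_m$, I would split into two cases. If $(A,B) \neq (0,0)$ then $(A,B)$ is a nontrivial zero of $g = g_1$ over $k'$, so $l \subseteq k'$; if $(A,B) = (0,0)$ then at least one of $\underline{x}, \underline{y}$ is a nontrivial zero of $g_{m-1}$ over $k'$, and the inductive hypothesis again gives $l \subseteq k'$. Conversely, if $l \subseteq k'$ then by induction $g_{m-1}$ has a nontrivial zero $\underline{a}$ over $k'$, and the point $(\underline{a}, \mathbf{0})$ is a nontrivial zero of $g_m$ because $g\bigl(g_{m-1}(\underline{a}), g_{m-1}(\mathbf{0})\bigr) = g(0,0) = 0$. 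I expect the main subtlety to be precisely this bookkeeping of trivial versus nontrivial zeros at each level---ensuring that the substituted values $(A,B)$ being simultaneously zero is still witnessed by a genuine nontrivial zero of the inner form---rather than any deep input; the single conceptual ingredient is the normality of $l/k$ used in the base case, and everything above it is elementary manipulation of homogeneous forms. A final routine check is that each $g_m$ is not identically zero, which follows since the substitution of nonzero forms in disjoint variables into a nonzero form is again nonzero.
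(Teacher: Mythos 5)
Your proof is correct and follows essentially the same route as the paper: the base case is the homogenised minimal polynomial of a primitive element (with normality guaranteeing that any root generates all of $l$), and the dimension is then increased by the iterated substitution of Subsection~\ref{subsec: Lang's construction} with $r=1$ and $\Phi = F_1$ equal to that binary form, doubling the number of variables at each step. The only difference is cosmetic: you re-prove the key property of the composed forms directly by induction (and, usefully, make the ``if $l \subseteq k'$ then there is a point'' direction explicit), where the paper simply invokes Proposition~\ref{prop higher Lang's construction}.
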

    \begin{proof}
        Let $a\in l^{\times}$ be a primitive element of $l$ and $p_a(T) \in k[T]$ its minimal polynomial over $k$. Consider the homogenisation $F(X_0,X_1)$ of $p_a$. Since $l/k$ is normal, the closed point $Z$ of $\P^1_k$ defined by $F(X_0,X_1)=0$ is an $l$-normic form. Consider the hypersurfaces $\Phi^{(\mu)}(Z)$ obtained by applying Lang's construction with $r=1$ and $\Phi = F = F_1$. By Proposition \ref{prop higher Lang's construction} we have that $\Phi^{(\mu)}(Z)$ only admits points in extensions containing $l/k$. Moreover, since $F$ has $2$ variables, we have $N_{\mu} = 2^{\mu}$, hence the dimension of $\Phi^{(\mu)}(Z)$ can be made arbitrarily large.
    \end{proof}
    
    \subsection{From hypersurfaces to intersections}
    \noindent An important step in the proof of \cite[Theorem 2a]{Nagata1957NotesPaperLang} is considering intersections of hypersurfaces. The following results are the analogue of \cite[Theorem 1a]{Nagata1957NotesPaperLang}.
    \begin{prop}\label{prop intersection hypersurfaces}
        Let $q,i$ be non-negative integers and $\alpha \in \mathrm{K}_q(k)$. Let $Z_1, \hdots, Z_r \subseteq \P^n_k$ hypersurfaces of degree $d$ such that $r d^i \leq n$. Assume that $Z\coloneq \bigcap_{i=1}^r Z_i$ admits a point on a simple normal extension of $k$ and that $k$ satisfies $C_i(\alpha)$. Then, we have $\alpha \in \mathrm{N}_q(Z/k)$.
    \end{prop}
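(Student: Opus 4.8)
The plan is to mimic the strategy of \cite[Theorem 2a]{Nagata1957NotesPaperLang}: feed an $l$-normic form into Lang's construction so that the degree hypothesis $rd^i \leq n$ produces a single hypersurface to which $C_i(\alpha)$ applies, and then argue that every finite extension contributing to the norm group of that hypersurface already contributes to $\mathrm{N}_q(Z/k)$.

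First I would fix a simple normal extension $l/k$ with $Z(l) \neq \emptyset$, which exists by hypothesis. By Proposition \ref{prop existence l-normic} there is an $l$-normic form $\Phi$ on $\mathbf{P}^N_k$ of some degree $e$, with $N$ as large as we wish, in particular with $N-1 \geq r$. Writing $F_1, \hdots, F_r$ for the degree-$d$ forms cutting out $Z_1, \hdots, Z_r$, I would apply Lang's construction to $\Phi$ and $\underline{F}$, obtaining the hypersurface $\Phi^{(\mu)}(Z_1, \hdots, Z_r) \subseteq \mathbf{P}^{N_\mu - 1}_k$ of degree $D_\mu = e d^{\mu-1}$. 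Since $r d^i \leq n$, Proposition \ref{prop Langs construction tending to infty} gives $N_\mu / D_\mu^i \to \infty$, so I may choose $\mu$ with $D_\mu^i \leq N_\mu - 1$. The degree condition required by $C_i$ is then satisfied by $\Phi^{(\mu)}(Z_1, \hdots, Z_r)$ in $\mathbf{P}^{N_\mu - 1}_k$, and the hypothesis $C_i(\alpha)$ yields $\alpha \in \mathrm{N}_q(\Phi^{(\mu)}(Z_1, \hdots, Z_r)/k)$.

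It then suffices to prove the inclusion $\mathrm{N}_q(\Phi^{(\mu)}(Z_1, \hdots, Z_r)/k) \subseteq \mathrm{N}_q(Z/k)$. This norm group is generated by the images $\mathrm{N}_{k'/k}(\mathrm{K}_q(k'))$ as $k' = k(x)$ ranges over residue fields of closed points $x$ of $\Phi^{(\mu)}(Z_1, \hdots, Z_r)$. For each such $x$, Proposition \ref{prop higher Lang's construction} (applied with its finite extension taken to be $k'$) forces one of two alternatives. If $Z(k') \neq \emptyset$, then the residue field of some closed point of $Z$ embeds into $k'$, and transitivity of the norm map gives $\mathrm{N}_{k'/k}(\mathrm{K}_q(k')) \subseteq \mathrm{N}_q(Z/k)$. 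If instead $Z_\Phi(k') \neq \emptyset$, then the $l$-normic property of $\Phi$ (Definition \ref{def l-normic forms}) forces $l \subseteq k'$, whence $\mathrm{N}_{k'/k}(\mathrm{K}_q(k')) \subseteq \mathrm{N}_{l/k}(\mathrm{K}_q(l)) \subseteq \mathrm{N}_q(Z/k)$, the final inclusion using $Z(l) \neq \emptyset$. In either case each generator lies in $\mathrm{N}_q(Z/k)$, so $\alpha \in \mathrm{N}_q(Z/k)$.

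The essential point, and the only genuinely new ingredient over Lang--Nagata, is the use of $l$-normic rather than plain normic forms: it is precisely what converts the ``$Z_\Phi$ acquires a point'' alternative into the statement $l \subseteq k'$, which is harmless because $Z$ already has a point over $l$. The main thing to check carefully is that the two alternatives of Proposition \ref{prop higher Lang's construction} really exhaust every closed point and that the norm-transitivity bookkeeping is correct in each case; by contrast the degree estimate is immediate from Proposition \ref{prop Langs construction tending to infty}.
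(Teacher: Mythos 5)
Your proposal is correct and follows essentially the same route as the paper: apply Lang's construction to an $l$-normic form (Proposition \ref{prop existence l-normic}) for a simple normal extension $l$ with $Z(l)\neq\emptyset$, invoke Proposition \ref{prop Langs construction tending to infty} to make the degree condition hold for large $\mu$, apply $C_i(\alpha)$ to the resulting hypersurface, and use Proposition \ref{prop higher Lang's construction} together with the $l$-normic property to reduce every contributing extension to one over which $Z$ has a point. The only cosmetic difference is that you phrase the final reduction via norm transitivity through $l$, whereas the paper observes directly that $l\subseteq l_j$ and $Z(l)\neq\emptyset$ imply $Z(l_j)\neq\emptyset$; these are equivalent.
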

    \begin{proof}
        Consider a simple normal extension $l/k$ such that $Z(l)\neq \emptyset$.  Applying Proposition \ref{prop existence l-normic} to the extension $l/k$ we obtain a hypersurface $H =\{\Phi = 0\} \subseteq \P^n_{k}$ with $r \leq n$ that only admits points on extensions of $l$. Since $k$ satisfies $C_i(\alpha)$, Proposition~\ref{prop Langs construction tending to infty} ensures the existence of $\mu\in\NN$ such that $\alpha \in \mathrm{N}_{q}(\Phi^{(\mu)}(Z_1,\hdots ,Z_r)/k)$. Explicitly, this means that there exist finite extensions $l_1,\hdots ,l_s/k$ and classes $\beta_j \in \mathrm{K}_q(l_j)$, for each $1\leq j\leq s$, such that $\Phi^{(\mu)}(Z_1,\hdots , Z_r)(l_j) \neq \emptyset$ and $\alpha = \prod_{j=1}^s \mathrm{N}_{l_j/k}(\beta_j)$. By Proposition~\ref{prop higher Lang's construction} we deduce that for every $1\leq j \leq s$ we either have $Z(l_j) \neq \emptyset$ or $H(l_j)\neq \emptyset$. Since $H$ is an $l$-normic form we have $l \subseteq l_j$ for every $1\leq j\leq s$ such that $H(l_j)\neq \emptyset$. Thus, since $Z$ admits a point over $l$ we have that $Z(l_j)\neq\varnothing$ every $1\leq j\leq s$. Therefore for every $1\leq j\leq s$ the set $Z(l_j)$ is non-empty, and hence we have that $\alpha \in \mathrm{N}_q(Z/k)$.
    \end{proof}
\begin{rmk}\label{remark: failure of norm forms}
        Let $l/k$ be a finite Galois extension of prime degree or, more generally, that it does not have any intermediate non-trivial extensions. Fix a primitive element $\omega \in l$. Then, the hypersurface given by 
        \begin{equation*}
            \mathrm{N}_{l/k}(X_0 + X_1 \omega + \hdots , X_{[l:k]-1} \omega^{[l:k]-1})=0,
        \end{equation*}
         is an $l$-normic form. These are the normic forms used in \cite{Nagata1957NotesPaperLang}, but they are not enough to prove Proposition~\ref{prop intersection hypersurfaces} since the intersection of hypersurfaces does not necessarily have a point in such an extension.
    \end{rmk}
    
    \noindent Thanks to the primitive element theorem, Proposition \ref{prop intersection hypersurfaces} applies to any perfect field. It can also apply for imperfect fields when $[k:k^p] \leq p^i$ due to the following folklore~proposition.
    \begin{prop}\label{prop separable closed Cdelta}
        Let $k$ be a separably closed field of positive characteristic $p$. Assume that $[k:k^p]\leq p ^{\delta}$. Then the field $k$ is $C_{\delta}$.
    \end{prop}
    \begin{proof}
        If $\delta'$ is such that $[k:k^p]=p^{\delta'}$, the field $k$ is elementary equivalent in the sense of model theory to the separable closure of $\mathbf{F}_p(x_1,\dots, x_{\delta'})$ thanks to \cite[Proposition]{Ershov1067FieldsSolvableTheory}, and this is a $C_{\delta'}$ field due to \cite[Theorem 2a]{Nagata1957NotesPaperLang}. Since the $C_i$ property can be stated in first order logic, we conclude that $k$ is $C_{\delta'}$, and hence $C_{\delta}$. 
    \end{proof}
    \noindent Furthermore, since we only require these intersections to have a point over a simple normal extension in order to apply proposition~\ref{prop intersection hypersurfaces} and not a Galois extension, we can slightly improve this bound.
    \begin{lemma}\label{lemma p-degree Ci}
        Let $k$ be a field of positive characteristic $p$ and $\delta \in \mathbf{N}$ such that $[k:k^p] \leq p^{\delta}$. Let $Z_1,\dots , Z_r \subseteq \mathbf{P}^n_k$ be hypersurfaces of degree $d$ such that $rd^{\delta -1} \leq n$. Denote by $Z$ the intersection $\bigcap_{j=1}^rZ_j$. Then, there exists a simple normal extension $l/k$ such that $Z(l) \neq \emptyset$. 
    \end{lemma}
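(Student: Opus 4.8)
The plan is to reduce the statement to a field-theoretic control on the inseparability of the residue field of a well-chosen closed point, and then to exploit the numerical budget $md^{\delta-1}\le n$ exactly as one exploits the Lang--Nagata condition $md^{i}\le n$ with $i=\delta-1$. First I would dispose of the perfect case: if $k$ is perfect then every finite extension is separable, so taking the Galois closure of the residue field of any closed point of $Z$ produces a simple normal extension, and it remains only to ensure $Z(\overline{k})\neq\emptyset$. Hence I may assume $\delta\ge 1$; then $[k:k^p]\le p^\delta$ with $\delta\ge1$ forces $k\neq k^p$, so $k$ is infinite, and since $d^{\delta-1}\ge 1$ the hypothesis gives $m\le md^{\delta-1}\le n$. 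By the projective dimension theorem the intersection $Z=\bigcap_{j=1}^m Z_j$ is non-empty over $\overline{k}$, with every component of dimension at least $n-m\ge 0$. Thus the task becomes: produce a closed point $x\in Z$ whose residue field $k(x)$ is simple and normal over $k$. I record that every purely inseparable extension is automatically normal, so the goal is a residue field that is the compositum of a Galois extension of $k$ with a simple purely inseparable one.

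The heart of the argument is to bound the inseparable part of $k(x)$. Since $k$ is infinite I would first cut $Z$ down to dimension zero by intersecting with general $k$-rational hyperplanes; by Bézout the resulting scheme has degree at most $d^m$, so each closed point has residue degree, hence inseparable exponent, controlled by $d$ and $m$. To see that some point is in fact simple, I would analyse the inseparability through a $p$-basis $t_1,\dots,t_\delta$ of $k$ (of cardinality $\log_p[k:k^p]\le\delta$) together with Frobenius descent: if a closed point $x$ has inseparable exponent $e$, then its coordinates satisfy $x_i^{p^{e}}\in k(x)_s$, so $[x_i^{p^e}]$ is a separable point of the Frobenius twist $\{F_j^{(p^e)}=0\}$, a variety of the same degree and dimension defined over $k^{p^e}$. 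The point is that a single degree-$d$ equation can only force inseparability in a bounded number of the $p$-basis directions, and the extra room encoded in $md^{\delta-1}\le n$ is precisely what should allow one to move the chosen point into general position so that these directions collapse to a single purely inseparable generator --- exactly as in the toy case of $X^p-\sum_i t_iY_i^p$, where a generic point has residue field $k\big(\sum_i t_i^{1/p}y_i\big)$, a simple degree-$p$ extension, rather than the non-simple $k(t_1^{1/p},\dots,t_\delta^{1/p})$.

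Once a closed point $x$ is produced whose purely inseparable part $k(x)/k(x)_s$ is simple, I would finish by replacing $k(x)$ with a simple normal extension still carrying a point of $Z$: take the Galois closure $N$ of the separable subextension $k(x)_s/k$ (itself simple and normal, being Galois) and form its compositum with the simple purely inseparable piece; a standard primitive-element argument for a separable-times-simple-inseparable compositum shows the result is simple, and it is normal because it is the compositum of a normal separable and an (automatically normal) purely inseparable extension. Then $Z(l)\neq\emptyset$ for $l=N\cdot k(x)$, as required. The main obstacle is the middle step: turning the purely geometric inequality $md^{\delta-1}\le n$ into the algebraic statement that some closed point of $Z$ is inseparable in only one direction, i.e. has simple purely inseparable part. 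This is the positive-characteristic analogue of Lang's quasi-algebraic closure estimate, and I expect it to require the same inductive use of Lang's construction (Propositions \ref{prop higher Lang's construction} and \ref{prop Langs construction tending to infty}), applied to an inseparable normic-type form, so that each inseparable direction beyond the first is absorbed at the cost of one factor of $d$ in the dimension budget.
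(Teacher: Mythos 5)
There is a genuine gap: the step you yourself flag as ``the main obstacle'' --- producing a closed point of $Z$ whose purely inseparable part over $k$ is generated by a single element --- is exactly the content of the lemma, and your proposal does not prove it. The Bézout/generic-hyperplane reduction only bounds the degree of closed points, which says nothing about whether their residue fields are simple over $k$; the $p$-basis/Frobenius-twist heuristic (``a single degree-$d$ equation can only force inseparability in a bounded number of directions'') is not substantiated and is in tension with Remark \ref{rmk hypersup with only non-simple points}, where two equations already force a non-simple extension; and the numerical hypothesis $md^{\delta-1}\le n$ is never actually used in your argument beyond extracting $m\le n$. So what remains is a plan, not a proof. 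The final assembly step (compositum of a Galois extension with a simple purely inseparable one is simple and normal) is fine, but it is not where the difficulty lies.

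The idea you are missing is to work over one large, carefully chosen overfield rather than hunting for a good closed point directly. Fix a single $x\in k\setminus k^p$ and set $K:=\bigcup_{m\ge1}k^s(x^{1/p^m})$. Two things happen at once. First, $[K:K^p]\le p^{\delta-1}$, and a separably closed field of $p$-degree at most $p^{\delta-1}$ is $C_{\delta-1}$ (via Ershov's elementary equivalence with $\mathbf{F}_p(x_1,\dots,x_{\delta-1})^s$, Lang--Nagata, and the fact that $C_i$ is first order); by \cite[Theorem 1a]{Nagata1957NotesPaperLang} and the hypothesis $md^{\delta-1}\le n$ the intersection $Z$ then has a $K$-point --- this is where the exponent $\delta-1$ enters, not through any pointwise analysis of inseparable directions. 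Second, every finite subextension $l/k$ of $K/k$ is automatically of the form $l_s(x^{1/p^m})$ with $l_s/k$ separable, hence has finitely many intermediate fields and is therefore simple, and its normal closure stays inside $K$. So the simplicity is built into the choice of $K$ for free, rather than extracted from the geometry of $Z$.
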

    \begin{proof}
        Denote by $k^s$ the separable closure of $k$. Now fix $x \in k \setminus k^p$. The field $K := \bigcup_{m\geq 1} k^s(x^{1/p^m})$ satisfies $[K:K^p]\leq p^{\delta-1}$ due to \cite[Lemma~3.15]{DiegoLuco2023TransferSerreII}. We can apply Proposition \ref{prop separable closed Cdelta} and \cite[Theorem 1a]{Nagata1957NotesPaperLang} to ensure that $Z(K) \neq \emptyset$. In particular, there exists a finite subextension $l/k$ of $K/k$ such that $Z(l)\neq \emptyset$. We may assume that $l$ is normal since its normal closure is contained in $K$. It remains to prove that $l$ is simple. By construction there exists a separable extension $l_s/k$ and a $m \in \mathbf{N}$ such that $l=l_s(x^{1/p^m})$. This implies that every subextension $k'/k$ of $l/k$ is a separable subextension of $l/k(x^{1/p^{m'}})$ for some $m' \leq m$. We deduce that $l/k$ has only finitely many subextensions because for every $m'\leq m$ the extension $l/k(x^{1/p^{m'}})$ has only finitely many separable subextensions. Hence, $l/k$ is a simple extension.
    \end{proof}
    \noindent From this lemma and Proposition \ref{prop intersection hypersurfaces} we can deduce the following analogue of \cite[Theorem~1a]{Nagata1957NotesPaperLang}.
    \begin{cor}
        Let $k$ be a field satisfying $C_i^q$. If $k$ has positive characteristic $p$, further assume that $[k:k^p]\leq p^{i+1}$. Then, the following condition is satisfied for all finite extensions $l/k$: Let $Z_1,\hdots ,Z_m \subseteq \P^n_l$ be hypersurfaces of degree $d$ such that $md^i\leq n$ and $Z$ the intersection $\bigcap_{j=1}^mZ_j$. Then, we have $\KK_q(l) = \mathrm{N}_q(Z/l)$.
    \end{cor}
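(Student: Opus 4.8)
The plan is to deduce this corollary by assembling the two tools just proved: Proposition~\ref{prop intersection hypersurfaces}, which converts a point of the intersection lying on a \emph{simple normal} extension into the membership $\alpha\in\mathrm{N}_q(Z/l)$ once the base field satisfies $C_i(\alpha)$, and Lemma~\ref{lemma p-degree Ci}, which manufactures exactly such a point in positive characteristic. Fix a finite extension $l/k$, hypersurfaces $Z_1,\hdots,Z_m\subseteq\P^n_l$ of degree $d$ with $md^i\le n$, and set $Z=\bigcap_{j=1}^m Z_j$. The inclusion $\mathrm{N}_q(Z/l)\subseteq\mathrm{K}_q(l)$ is automatic, so I only need the reverse one; I therefore fix an arbitrary $\alpha\in\mathrm{K}_q(l)$ and aim to show $\alpha\in\mathrm{N}_q(Z/l)$. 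To apply Proposition~\ref{prop intersection hypersurfaces} over the base field $l$ I must verify two things: that $l$ satisfies $C_i(\alpha)$, and that $Z$ has a point on a simple normal extension of $l$.

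The first is immediate from the hypothesis that $k$ satisfies $C_i^q$: by definition this property quantifies over \emph{all} finite extensions of $k$, so specialising it to $l/k$ shows that every hypersurface of $\P^{n'}_l$ of degree $d'$ with $(d')^i\le n'$ has full norm group $\mathrm{K}_q(l)$. In particular $l$ satisfies $C_i(\beta)$ for every $\beta\in\mathrm{K}_q(l)$, and a fortiori $C_i(\alpha)$. For the second, in positive characteristic I would first record that the $p$-degree is invariant under finite extensions, $[l:l^p]=[k:k^p]$: since the Frobenius is a field isomorphism $l\xrightarrow{\sim}l^p$ carrying $k$ onto $k^p$ one has $[l^p:k^p]=[l:k]$, and comparing the two factorisations of $[l:k^p]$ through the towers $k^p\subseteq l^p\subseteq l$ and $k^p\subseteq k\subseteq l$ yields $[l:l^p]=[k:k^p]\le p^{i+1}$. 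Lemma~\ref{lemma p-degree Ci} applied with $\delta=i+1$ (so that the degree hypothesis reads $md^{\delta-1}=md^i\le n$) then produces a simple normal extension $l'/l$ with $Z(l')\neq\emptyset$. In characteristic zero I would instead argue by dimension: as $d\ge 1$ and $md^i\le n$ we have $m\le n$, so over $\overline{l}$ the intersection of $m$ hypersurfaces in $\P^n$ is nonempty, whence $Z$ has a closed point over some finite extension of $l$, and by the primitive element theorem its normal closure is a simple normal extension of $l$ over which $Z$ acquires a point.

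With both hypotheses in hand, Proposition~\ref{prop intersection hypersurfaces} over $l$ with $r=m$ gives $\alpha\in\mathrm{N}_q(Z/l)$; since $\alpha$ was arbitrary this yields $\mathrm{K}_q(l)=\mathrm{N}_q(Z/l)$. The genuinely delicate point is the second step: Proposition~\ref{prop intersection hypersurfaces} requires a point on a \emph{simple normal} extension rather than an arbitrary finite one, and this is precisely where the bound $[k:k^p]\le p^{i+1}$ is indispensable, since it transports to $[l:l^p]\le p^{i+1}$ and feeds Lemma~\ref{lemma p-degree Ci} with the correct value $\delta=i+1$. By contrast the characteristic-zero case is soft, resting only on the dimension count together with the primitive element theorem. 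I would expect the verification of the $p$-degree invariance and the careful bookkeeping of the exponent $\delta=i+1$ to be the main things requiring attention.
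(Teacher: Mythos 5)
Your proposal is correct and is exactly the deduction the paper intends: the corollary is stated immediately after Lemma~\ref{lemma p-degree Ci} and Proposition~\ref{prop intersection hypersurfaces} with the remark that it follows from them, and your argument supplies precisely the missing details (that $C_i^q$ for $k$ gives $C_i(\alpha)$ for every finite extension $l$ and every $\alpha\in\mathrm{K}_q(l)$, that $[l:l^p]=[k:k^p]$ so the lemma applies over $l$ with $\delta=i+1$, and the soft characteristic-zero case via the dimension count and the primitive element theorem).
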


\section{Stability under finite extensions}
\label{sec: Stability under finite extensions}
In this section we study the stability of the $C_i^q$ property under finite algebraic extensions. To do this, we consider the following variant of the $C_i^q$ property.
\begin{defi}
Let $k$ be a field and $i,q$ a pair of non-negative integers. We say $k$ satisfies the $C^q_i(k)$ property if for every hypersurface $Z\subseteq\P^n_k$ of degree $d$ such that $d^i\leq n$ we~have
\begin{equation*}
    \operatorname{N}_q(Z/k)=\operatorname{K}_q(k).
\end{equation*}    
\end{defi}
\noindent In particular, every finite extension $l/k$ satisfies $C_i^q(l)$ if and only if $k$ satisfies $C_i^q$.\par
The aim of this section is to prove the following result.

\begin{thm}
\label{thm: transferability under finite alg extension}
Let $k$ be a field and $\alpha \in \KK_q(k)$. Assume that $k$ satisfying $C_i(\alpha)$ and if $k$ has positive characteristic $p$, further assume that $[k:k^p] \leq p^{i+1}$. Then, every finite Galois extension $l/k$ satisfies $C_i(\alpha|_l)$.
\end{thm}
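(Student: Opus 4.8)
The plan is to mimic the Lang--Nagata descent (\cite[Theorem~2a]{Nagata1957NotesPaperLang}) through restriction of scalars, upgrading it to carry the class $\alpha$ in Milnor $\mathrm{K}$-theory. Fix a hypersurface $W=\{G=0\}\subseteq\P^n_l$ of degree $d$ with $d^i\le n$, and set $m=[l:k]$. Choosing a $k$-basis $e_1,\dots,e_m$ of $l$ and substituting $Y_r=\sum_{s} X_{rs}e_s$ into $G$, we may write $G=\sum_{t} G_t\, e_t$ with $G_1,\dots,G_m\in k[X_{rs}]$ homogeneous of degree $d$ in the $m(n+1)$ variables $X_{rs}$. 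Let $Z=\bigcap_{t}\{G_t=0\}\subseteq\P^N_k$ with $N=m(n+1)-1$. For an extension $F/k$, a common zero of the $G_t$ over $F$ is exactly a zero of $G$ over $l\otimes_k F$; this is the (affine) restriction of scalars, and it is the mechanism translating $W$ over $l$ into an intersection of $m$ degree-$d$ hypersurfaces over $k$. Since $d^i\le n$ and $m\ge 1$ we have $m d^i\le mn\le m(n+1)-1=N$, so both $m d^i\le N$ and $m d^{i-1}\le N$ hold.

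First I would produce a point of $Z$ on a simple normal extension of $k$. In characteristic $0$ this is immediate: $Z$ has a $\overline{k}$-point because $W$ does, hence a point over some finite extension, whose normal closure is simple and normal by the primitive element theorem. In characteristic $p$ with $[k:k^p]\le p^i$, I would apply Lemma~\ref{lemma p-degree Ci} with $\delta=i$ to the intersection $Z$ of $m$ hypersurfaces of degree $d$ in $\P^N_k$: the hypotheses $[k:k^p]\le p^{\delta}$ and $m d^{\delta-1}\le N$ are exactly what we verified, so $Z$ acquires a point over a simple normal extension (the case $i=0$ forces $k$ perfect, so every finite extension is separable and the conclusion is even easier). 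With this point in hand, Proposition~\ref{prop intersection hypersurfaces} applies to $Z$ --- using $m d^i\le N$ and the hypothesis $C_i(\alpha)$ --- and yields $\alpha\in\mathrm{N}_q(Z/k)$. Concretely, $\alpha=\prod_{j}\mathrm{N}_{l_j/k}(\beta_j)$ for finite extensions $l_j/k$ with $Z(l_j)\ne\emptyset$ and classes $\beta_j\in\mathrm{K}_q(l_j)$.

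It remains to descend this identity from $k$ to $l$. Restricting to $l$ and using the base-change (projection) formula for the Milnor norm --- namely $\mathrm{N}_{l_j/k}(\beta)|_l=\prod_{\nu}\mathrm{N}_{F_\nu/l}(\beta|_{F_\nu})$, where $l\otimes_k l_j\cong\prod_\nu F_\nu$ and each $F_\nu$ is viewed as an extension of $l$ --- one rewrites $\alpha|_l$ as a product of terms $\mathrm{N}_{F_\nu/l}(\gamma_\nu)$. Each such term lies in $\mathrm{N}_q(W/l)$ provided $W(F_\nu)\ne\emptyset$, which is precisely the statement that the chosen point of $Z$ over $l_j$ is a point of the \emph{genuine} restriction of scalars $\mathrm{R}_{l/k}(W)$: for then $W(l\otimes_k l_j)=\prod_\nu W(F_\nu)\ne\emptyset$ forces every factor to carry a $W$-point. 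This is the heart --- and the main obstacle --- of the argument, since the naive intersection $Z$ above is strictly larger than $\mathrm{R}_{l/k}(W)$ (it contains degenerate points vanishing on some factor of $l\otimes_k l_j$). One must therefore either cut $Z$ down to a projective model of the restriction of scalars that is still an intersection of $m$ degree-$d$ hypersurfaces with the same numerics, or argue that the points furnished by Proposition~\ref{prop intersection hypersurfaces} may be taken non-degenerate, so that the base-change formula genuinely lands in $\mathrm{N}_q(W/l)$. The extra subtlety in positive characteristic is that $l\otimes_k l_j$ need not be reduced when $l/k$ is inseparable; the bound $[k:k^p]\le p^i$, which allowed us to invoke Lemma~\ref{lemma p-degree Ci}, is exactly what keeps these inseparability phenomena under control. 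Once every factor field is seen to carry a $W$-point, we conclude $\alpha|_l\in\mathrm{N}_q(W/l)$, i.e.\ $l$ satisfies $C_i(\alpha|_l)$.
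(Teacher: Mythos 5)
Your overall strategy is the same as the paper's: descend $W$ to the intersection $Z$ of $m=[l:k]$ degree-$d$ hypersurfaces in $\P^{m(n+1)-1}_k$ (this is exactly Proposition~\ref{prop hypersurface and finite extensions}), check the numerics so that Lemma~\ref{lemma p-degree Ci} and Proposition~\ref{prop intersection hypersurfaces} produce a decomposition $\alpha=\prod_j\mathrm{N}_{l_j/k}(\beta_j)$ with $Z(l_j)\neq\emptyset$, and then restrict to $l$ via the base-change formula of \cite[Lemma 7.3.6]{GilleSzamuely2017CentralGalois} for $l\otimes_k l_j$. Your verification of the inequalities $md^{i-1}\leq md^{i}\leq m(n+1)-1$ is in fact more explicit than the paper's, and your treatment of the $i=0$ edge case is fine.

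The problem is that the step you yourself single out as ``the heart --- and the main obstacle'' is left undone: you do not prove that \emph{every} residue field of $l\otimes_k l_j$ carries a $W$-point, and neither of the two repairs you gesture at (shrinking $Z$ to a genuine projective restriction of scalars, or choosing non-degenerate points) is carried out. The idea you are missing is a conjugation argument: Proposition~\ref{prop hypersurface and finite extensions} is an \emph{equivalence} $Z(k')\neq\emptyset\Leftrightarrow W(lk')\neq\emptyset$ valid for every finite $k'/k$, and since $Z$ is defined over $k$, a point of $Z$ over $l_j$ gives $Z(\sigma(l_j))\neq\emptyset$ for every embedding $\sigma$ of $l_j$ into $\overline{k}$. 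The residue fields $n_{j,s}$ of $l\otimes_k l_j$ are precisely the composites $l\cdot\sigma(l_j)$, so applying the equivalence to each conjugate separately yields $W(n_{j,s})\neq\emptyset$ for all $s$; one never needs a single point of $Z(l_j)$ that is non-degenerate at all factors simultaneously. (The multiplicities $e_{j,s}$ in the non-reduced case are harmless, since $e_{j,s}\mathrm{N}_{n_{j,s}/l}(\gamma)=\mathrm{N}_{n_{j,s}/l}(\gamma^{e_{j,s}})$ still lies in $\mathrm{N}_q(W/l)$ once $W(n_{j,s})\neq\emptyset$.) To be fair, your worry is not misplaced: the forward direction of Proposition~\ref{prop hypersurface and finite extensions} (a $k'$-point of $Z$ yields a point of $W$ over the specific composite $lk'$ rather than over some unspecified factor of $l\otimes_k k'$) is exactly the degeneracy issue you raise, and the paper's justification of it is terse. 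But as written your proposal stops at the decisive point and therefore does not constitute a complete proof.
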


 \noindent Before proving, let us verify the following result regarding Weil restrictions.
    \begin{lemma}\label{lemma hypersurface and finite extensions}
        Let $l/k$ be a finite Galois extension of degree $m$ and $H \subseteq \P^n_l$ a hypersurface of degree $d$. Then, there exist hypersurfaces $Z_1,\hdots , Z_{m} \subseteq \P^{(n+1)m -1}_k$ of degree $d$ such that for every finite extension $k'/k$ the intersection $Z \coloneq  \bigcap_{j=1}^m Z_j$ admits a $k'$-point if and only if we have $H(l\otimes_k k') \neq \emptyset$.
    \end{lemma}
    \begin{proof}
        Let $F(X_0,\hdots,X_n)$ be the polynomial defining $H$. Fix a basis $ \omega_1,\dots, \omega_{m}$ of $l$ over $k$. We can replace the variable $X_i$ by $\underline{T}_i =T_{i,1}\omega_1 + \hdots + T_{i,m}\omega_{m}$. After rearranging the equation and expanding the constants, we obtain the decomposition
        \[
            F(\underline{T}_0,\hdots, \underline{T}_n)= \sum_{j=1}^{m}F_{j}(T_{0,1}, \hdots , T_{n,m}) \omega_{j},
        \]
        for certain homogeneous polynomials $F_j$ of degree $d$ with coefficients in $k$. For every $1\leq j \leq m$ we let $Z_j$ be the hypersurface in $\P^{(n+1)m-1}_k$ defined by $F_j =0$. It is clear that the intersection $Z$ has a point over $k'$ if $H(l\otimes_k k')\neq \emptyset$. \par 
        Conversely, let us consider $[t_{0,1}: \dots : t_{n,m}] \in Z(k')$. The tensor product $l \otimes_k k'$ decomposes as a product $\prod_{i=1}^s k_i$ of field $k_i$. One can verify that $A\coloneq(\sum_{j=1}^{m} t_{0,j}\otimes \omega_j , \dots , \sum_{j=1}^{m} t_{n,j}\otimes \omega_{j})$ is a non-zero solution of $F = 0$ in $l \otimes_k k'$. Unfortunately, it is not enough to determine an $l \otimes_k k'$-point of $H$. However, under the identification $l \otimes_k k' = \prod_{i=1}^s k_i$, there exists an index $i_0$ such that the $i_0$-th component of $A$ is non-zero —in particular, we have $H(k_i) \neq \emptyset$. Since the extension $l/k$ is Galois, the fields $k_i$ are all isomorphic over $k$. Noting $H(l\otimes_k k') = \prod_{i=1}^sH(k_i)$, we deduce that $H(l\otimes_k k')$ is non-empty.
    \end{proof}

    \noindent It should be noted that the variety $Z$ in Lemma~\ref{lemma hypersurface and finite extensions} is not isomorphic to Weil restriction of $H$. However, the Weil restriction of $H$ is the intersection of the equations $F_j$ defining $Z$ inside the Weil restriction $\mathrm{R}_{l/k}(\mathbf{P}^n_l)$, but not when embedded in $\mathbf{P}_k^{(n+1)m - 1}$. Nevertheless, a Segre-like embedding realises $\mathrm{R}_{l/k}(\mathbf{P}^n_l)$ as a closed subvariety of $\mathbf{P}_k^{(n+1)m - 1}$ and under this embedding we have
    \[
        \mathrm{R}_{l/k}(H) \simeq Z \cap \mathrm{R}_{l/k}(\P^n_l) \subseteq \mathbf{P}_k^{(n+1)m - 1}.
    \]
    So, Lemma~\ref{lemma hypersurface and finite extensions} states that in the case of Galois extensions, $Z(k')\neq \emptyset$ implies that $\mathrm{R}_{l/k}(Z)(k')$ is non-empty.

\begin{proof}[Proof of Theorem~\ref{thm: transferability under finite alg extension}]
    Let $H\subseteq \P^n_l$ be a hypersurface of degree $d$ such that $d^i\leq n$. Let $Z_1,\dots , Z_m\subseteq \P^{m(n+1)-1}_k$ be the hypersurfaces given by Lemma~\ref{lemma hypersurface and finite extensions} and denote by $Z \coloneq  \bigcap_{j=1}^{m} Z_j$ their intersection. By Proposition \ref{prop intersection hypersurfaces} and Lemma \ref{lemma p-degree Ci}, for every $\alpha \in \KK_q(k)$ there exist finite extensions $k_1, \hdots , k_r/k$ and $\beta_j \in \KK_q(k_j)$ such that $Z(k_j)\neq \emptyset$ and $\alpha~=~\prod_{j=1}^r\mathrm{N}_{k_j/k}(\beta_j)$. \par
    Since the extension $l/k$ is Galois, the tensor product $k_j \otimes_k l$ can be decomposed as a product of field $n_{j,s}$. Then, \cite[Lemma 7.3.6]{GilleSzamuely2017CentralGalois} ensures that the following diagram is~commutative.
    \begin{equation*}
        \begin{tikzcd}[column sep = 10ex, row sep = 6ex]
            \prod_{j=1}^r \KK_q(k_j) \ar[r] \ar[d,swap,"\prod i_{n_{j,s}/k_j}"] & \KK_q(k) \ar[d, "i_{l/k}"]\\
             \prod_{j=1}^r \prod_{s=1} \KK_q(n_{j,s}) \ar[r,"\prod \mathrm{N}_{n_{j,s}/l}"]& \KK_q(l). 
        \end{tikzcd}
    \end{equation*}
    Thie is enough to conclude becuase Lemma~\ref{lemma hypersurface and finite extensions} ensures that $H(n_{j,s}) \neq \emptyset$.
\end{proof}

\section{Transfer results for \texorpdfstring{$C_i(\alpha)$}{Cialpha}}
\label{sec: Transfer results for C_i(alpha)}
    \noindent In this section we prove the main result of the paper, transfer results for the $C_i(\alpha)$ property from $k$ to function fields over $k$ and to the Laurent series fields. These results are of different nature depending on whether $\alpha$ is a constant class or not. 
   \subsection{Transfer principle for constant classes}
    \noindent Let $K/k$ be an extension. We call a class $\alpha \in \mathrm{K}_q(K)$ \textit{constant} if it belongs to the image of the restriction $\KK_q(k) \to \KK_q(K)$. In the case where $K$ is a function field over $k$ and $q=1$ constant classes are indeed constant functions.
    \begin{thm}\label{thm constant transfer}
        Let $k$ be a field of characteristic exponent $p$. Let $i,q \in \NN$ and $\alpha \in \KK_q(k)$. Assume that $k$ satisfies $C_i(\alpha)$ and $[k:k^p] \leq p^{i+1}$. Then, every finitely generated regular extension $K/k$ of finite transcendence degree $\delta$ satisfies $C_{i+\delta}(\alpha|_K)$.
    \end{thm}
    \noindent Before proving the theorem, we need the following lemma.
    \begin{lemma}\label{lemma Weil restriction of hypersurfaces}
        Let $l/k$ be a finite extension and $H \subseteq \P^n_l$ a hypersurface of degree $d$. Then, there exist hypersurfaces $Z_1,\dots, Z_{m} \subseteq \P^{(n+1)m-1}_k$ of degree $d$ where $m=[l:k]$ such that for every extension $k'/k$ linearly disjoint from $l/k$ we have $H(l\otimes_k k') \neq \emptyset$ if and only if the intersection $Z \coloneq \bigcap_{i=1}^{m} Z_i$ admits a point over $k'$.
    \end{lemma}

    \begin{proof}
        Let $F(X_0,\hdots,X_n)$ be the polynomial defining $H$. Fix a basis $\omega_0,\dots, \omega_{m-1}$ of $l$ over $k$. We can replace the variable $X_i$ by $\underline{T}_i =T_{i,1}\omega_{i,0} + \dots + T_{i,m}\omega_{m-1}$. After rearranging the equation and expanding the constants, we see that
        \[
            F(\underline{T}_0,\hdots, \underline{T}_n)= \sum_{j=1}^{m}F_{j}(T_{0,1}, \hdots , T_{n,m}) \omega_{j},
        \]
        for certain homogeneous polynomials $F_j$ of degree $d$ with coefficients in $k$. For every $1\leq j \leq m$ we let $Z_j$ be the hypersurface in $\P^{(n+1)m-1}_k$ defined by $F_j =0$. It is clear that the intersection $Z$ has a point over $k'$ if $H(l\otimes_k k')\neq \emptyset$. Conversely, let us consider $[t_{0,1}: \dots : t_{n,m}] \in Z(k')$. Then, one can directly verify that 
        \[
        \left[\sum_{i=1}^{m}t_{0,i}\otimes \omega_0:\dots : \sum_{i=1}^{m}t_{n,i}\otimes \omega_i\right]\in \P^n(k'\otimes_k l)
        \] 
        lies in $H$. Note that we cannot have $\sum_{i=1}^{m}t_{0,i}\omega_j = 0$ for every $i \in \{0,\dots , n\}$ since the extensions $l/k$ and $k'/k$ are linearly disjoint.
    \end{proof}

    \begin{proof}[Proof of Theorem~\ref{thm constant transfer}]
        Let us first consider the purely transcendental case, that is $K=k(t_1,\dots, t_{\delta})$. In this case, the proof is easily reduced to the case $\delta = 1$. Let $Z \subseteq \mathbf{P}^n_{k(t)}$ be a hypersurface of degree $d$ such that $d^{i+1} \leq n$. Let $F \in k(t)[X_0,\dots , X_n]$ be a homogeneous polynomial defining $Z$. We may assume that $F$ has coefficients in $k[t]$. Let $M$ be the maximum degree in $t$ of the coefficients of $F$. Fix $s \in \mathbf{N}$. By explicit computation, we can find homogeneous polynomials $F_{0}, \hdots, F_{ds +M} \in k[T_{0,0}, \hdots, T_{n,s}]$ of degree $d$ such~that
        \[ F\left( \sum_{r=0}^s T_{0,r}t^r, \hdots , \sum_{r=0}^s T_{n,r}t^r\right) = \sum_{m=0}^{ds+M} F_m(T_{0,0}, \hdots, T_{n,s})t^m.\]
        For every $m \in \{0,\hdots, ds +M\}$ we denote by $Z_m \subseteq \mathbf{P}^{(n+1)(s+1)-1}_k$ the hypersurface defined by $F_m = 0$ and by $X$ the intersection $\bigcap_{m=0}^{sd+M} Z_m$. If we choose $s$ to be at least $(M+1)d^i - n$, we have $(sd+M+1)d^{i} \leq (s+1)(n+1)-1$ since $d^{i+1} \leq n$. By Lemma~\ref{lemma p-degree Ci}, the variety $X$ admits a point in a simple normal extension. Then, Proposition \ref{prop intersection hypersurfaces} ensures that $\alpha$ belongs to $\mathrm{N}_{q}(X/k)$. By \cite[Lemma 7.3.6]{GilleSzamuely2017CentralGalois} we conclude that $\alpha |_{k(t)}\in\mathrm{N}_q(Z/k(t))$. \par
        Note that the previous argument proves a slightly stronger property than the $C_{i+\delta}(\alpha|_K)$ property for $K$. It also proves that for every hypersurface $Z \subseteq \P^n_K$ of degree $d$ such that $d^{i+\delta}\leq n$, there exist finite extensions $k_1,\dots, k_r/k$ such that for every $j\in \{1,\dots , r\}$ we have $Z(k_jK) \neq \emptyset$ and $\alpha$ belongs to $\langle \Norm_q(k_j/k)\mid j\in \{1,\dots , r\}\rangle$. Moreover, we can follow the argument of Proposition~\ref{prop intersection hypersurfaces} to ensure that this also holds for intersections of hypersurface: for all hypersurfaces $Z_1,\dots, Z_m \subseteq \P^n_k$ of degree $d$ such that $md^{i+\delta} \leq n$ we can find finite extensions $k_1,\dots , k_r/k$ such that $Z(k_iK) \neq \emptyset$ and $\alpha \in \langle \Norm_q(k_j/k)\mid j\in \{1,\dots , r\}\rangle$ where $Z$ is $\bigcap_{j=1}^{m} Z_j$. \par

        Let $K/k$ be as in the statement and $t_1,\dots, t_{\delta} \in K$ be a separable transcendence basis of $K/k$. Fix $K_0$ to be $k(t_1,\dots , t_{\delta})$. Let $H\subseteq \P^n_{K}$ be a hypersurface of degree $d$ such that $d^{i+\delta} \leq n$ and $Z, Z_1,\dots , Z_{m} \subseteq \P_{K_0}^{(n+1)m -1}$ be as in Lemma~\ref{lemma Weil restriction of hypersurfaces}. Note that $[K_1:K_1^p] \leq p^{i+\delta +1}$ and $m d^{i+\delta} \leq (n+1)m-1$, so $Z$ admits a point in a normal simple extension of $K_0$ thanks to Lemma~\ref{lemma p-degree Ci}. By the previous paragraphs, there exist finite extensions $k_1,\dots, k_r/k$ such that $Z(k_jK_0) \neq \emptyset$ and $\alpha$ belongs to $\langle \Norm_q(k_j/k)\mid j\in \{1,\dots , r\}\rangle$. Now, since the extension $K/k$ is regular, $k_jK_0/K_0$ and $K/K_0$ are linearly disjoint for every $j\in \{1,\dots ,r\}$, see \cite[VIII~\S 4]{Lang2002Algebra} and \cite[Proposition~VIII~3.1]{Lang2002Algebra}. Thus, Lemma~\ref{lemma Weil restriction of hypersurfaces} ensures that $H$ admits a $k_j\otimes_{K_0} K$-point. This is enough to conclude that $\alpha|_{K} \in \Norm_q(H/K)$.
    \end{proof}

    \begin{rmk}\label{rmk only constant extensions}
        Note the argument above proves something slightly stronger than the $C_{i+\delta}(\alpha|_K)$ property for $K$. It proves that for all hypersurfaces $Z_1,\dots,Z_m \subseteq \P^n_K$ of degree $d$ such that $md^{i+\delta} \leq n$, there exist finite extensions $k_1, \dots , k_r/k$ such that the intersection $Z \coloneq \bigcap_{j=1}^m Z_j$ satisfies $Z(k_jK) \neq \emptyset$ and $\alpha \in \langle \mathrm{N}_q(k_j/k) \, | \, j \in \{1,\dots , r\} \rangle$.
    \end{rmk}

    \begin{rmk}\label{rmk hypersup with only non-simple points}
        Note that Theorem \ref{thm constant transfer} applies as soon as one can ensure that the intersection of hypersurfaces as in Proposition~\ref{prop intersection hypersurfaces} admits a point on a simple normal extension. We do not know if this is always the case. \par 
        The following example shows that the condition $r d^i \leq n$ is necessary. Let $K$ be a field such that $[K:K^p] \geq p^2$ and $a,b \in K^{\times}$ elements satisfying $[k^p(a,b):k^p] = p^2$. Let $Z_a$ and $Z_b$ be the hypersurfaces defined by 
        \begin{align*}
            Z_a: (X_0^p-aX_1^p)^p - a(X_2^p - a X_3^p)^p &= 0 \\
            Z_b: (X_0^p-bX_1^p)^p - b(X_2^p - b X_3^p)^p &= 0.
        \end{align*}
        Note that $Z_a$ and $Z_b$ are constructed as the first iteration of Lang's construction explained in Section~\ref{subsec: Lang's construction} applied to $\Phi = F_1= X_0^p-aX_1^p$ and $\Phi = F_1 = X_0^p-bX_1^p$, respectively. Then, thanks to Proposition~\ref{prop higher Lang's construction} the hypersurfaces $Z_a$ and $Z_b$ can only admit rational points in extensions containing $K(a^{1/p})$ and $K(b^{1/p})$ respectively. In particular, the intersection $Z \coloneq  Z_a \cap Z_b$ can only admit rational points in extensions containing $K(a^{1/p},b^{1/p})$. However, the intersection $Z$ is non-empty because $[a^{1/p}b^{1/p}:b^{1/p}:a^{1/p} : 1]$ is a solution over $K(a^{1/p}, b^{1/p})$.
    \end{rmk}
    
    \noindent When the field $k$ satisfies $[k:k^p] =p$, we can apply Theorem~\ref{thm constant transfer} for any extension $K/k$ such that $K$ does not contain any algebraic element over $k$ thanks to the following~lemma:
    \begin{lemma}\label{lemma regular p-degree 1}
        Let $k$ be a field of positive characteristic $p$ such that $[k:k^p] \leq p$ and $K/k$ a finitely generated field extension. Then, the extension $K/k$ is regular whenever $k$ is algebraically closed inside $K$.
    \end{lemma}
    \begin{proof}
        Since every extension over a perfect field is separably generated \cite[Corollary~VIII~4.4]{Lang2002Algebra}, we might assume that $[k:k^p] = p$. \par
        Let $t_1, \dots, t_{\delta} \in K$ be a transcendence basis of $K/k$ and $K_0 \coloneq k(t_1,\dots, t_{\delta})$. If the extension $K/K_0$ is separable, then the extension $K/k$ is regular by definition. Let $K_1/K_0$ be the maximal separable subextension of $K/K_0$. \par
        We first consider the case $K= K_1(a^{1/p})$ for some $a\in K_1 \setminus K^p$. Let us assume, for the sake of contradiction, that there exists a finite extension $l/k$ such that $l$ and $K$ are not linearly disjoint over $k$. Since the $l$ and $K_1$ are linearly disjoint over $k$, \cite[Proposition~VIII~\S~3]{Lang2002Algebra} ensures that $lK_1$ and $K$ are not linearly disjoint over $K_1$. In particular, $a^{1/p}$ belongs to $lK_1$ or equivalently, $K \subseteq lK_1$. Additionally, there exists a purely inseparable subextension $k'/k$ of $l/k$ such that $l/k'$ is separable. Indeed, the extension $l/k$ can be decomposed as $l_s/k$ for a separable extension and $l=l_s(x^{1/p^r})$ for a well-chosen $x\in k$. Since $[k:k^p] = p$, we can take $k' = k(x^{1/p^r})$.
        \begin{equation*}
            \begin{tikzcd}[column sep =3ex , row sep = 4ex]
                & lK_1 \ar[dl,dash] \ar[dr,dash,"k'K_1" description]& & K \ar[ll,hookrightarrow]\ar[dl,dash]\\
                l\ar[rd,dash, "k'" description]& & K_1\ar[dl,dash] &\\
                & k & &
            \end{tikzcd}
        \end{equation*}
        Since the extension $lK_1/k'K_1$ is separable, we must have $K \subseteq k'K_1 = K_1(x^{1/p^r})$. Then, there must exist $\lambda_0, \dots , \lambda_{p^r -1} \in K_1$ such that 
        \[
            a^{1/p} = \sum_{i=0}^{p^r-1} \lambda_i x^{i/p^r}.
        \]
        We can take the $p$-th power of the previous expresion to obtain the following:
        \begin{align*}
            a & = \sum_{i=0}^{p^r-1} \lambda_i^p x^{i/p^{r-1}} \\
            &= \sum_{s=0}^{p^{r-1}-1}\left(\sum_{j=0}^{p-1}\lambda_{p^{r-1}j+s}^px^j\right) x^{s/p^{r-1}}.
        \end{align*}
        Since $a$ belongs to $K_1$, we must have $\sum_{j=0}^{p-1}\lambda_{p^{r-1}j+s}^px^j = 0$ for every $s \neq 0$. Moreover, the elements $1,x,\dots , x^{p-1}$ are linearly independent over $K_1^p$ because the extension $K_1/k$ is separable. We deduce that $\lambda_{p^{r-1}j+s} = 0$ for every $s \in \{1,\dots , p^r-1\}$ and in particular,
        \[
            a^{1/p} = \sum_{j=0}^{p-1}\lambda_{p^{r-1}j} x^{j/p}.
        \]
        This leads to a contradiction because it implies that $K=K_1(x^{1/p})$, but $x^{1/p}$ algebraic over $k$. We deduce that $K/k$ is linearly disjoint from $\overline{k}/k$ and thus, it is regular. \par
        Let $K/K_1$ be a general purely inseparable extension. There exists a tower of subextensions $K=K_n \supseteq K_{n-1}  \supseteq \dots \supseteq K_1$ such that $K_i = K_{i-1}(a_i^{1/p})$ for $a_i \in K_i \setminus K_i^p$. The previous argument ensures that $K_2/k$ is a regular extension. An inductive argument allows us to conclude that $K/k$ is also a regular extension.
    \end{proof}
    
    \noindent In the case $q=0$, we can deduce the optimal transfer result from \cite[Theorem~2a]{Nagata1957NotesPaperLang}.
    \begin{prop}\label{prop transfer C10}
        Let $K/k$ be a field extension of finite transcendence degree $j$. Assume that $k$ satisfies $C_i^0$. Then $K$ satisfies $C_{i+j}^0$. 
    \end{prop}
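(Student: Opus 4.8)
The plan is to sidestep the Milnor-$\mathrm{K}$-theoretic machinery entirely and reduce the statement to the classical Tsen--Lang--Nagata theory for honest $C_i$ fields, which is what makes the characteristic hypothesis $[k:k^p]\le p^{i+1}$ of Theorem~\ref{thm constant transfer} unnecessary in the case $q=0$. First I would invoke Lemma~\ref{lemma Ci0 prime to l} in both directions: applied to $K$ it tells us that proving $C_{i+j}^0$ for $K$ is the same as proving that the prime-to-$\ell$ extension $K(\ell)$ is $C_{i+j}$ for every prime $\ell$; applied to $k$ it converts the hypothesis into the statement that $k(\ell)$ is $C_i$ for every prime $\ell$. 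Thus the whole problem collapses to: given that $k(\ell)$ is $C_i$, show that $K(\ell)$ is $C_{i+j}$.

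Next I would pin down the relation between $K(\ell)$ and $k(\ell)$. Fixing $\ell$ and working inside $\overline{K}\supseteq\overline{k}$, write $K(\ell)=(K^{s})^{P_K}$ for a Sylow pro-$\ell$ subgroup $P_K$ of $\mathrm{Gal}(K^s/K)$. Restriction to $k^s$ carries $P_K$ to a pro-$\ell$ subgroup of $\mathrm{Gal}(k^s/k)$, hence into some Sylow pro-$\ell$ subgroup $P_k$; the field $F_\ell:=(k^s)^{P_k}$ is a conjugate of $k(\ell)$, so it is again $C_i$, and the inclusion of restricted Sylow groups forces $F_\ell\subseteq K(\ell)$. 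Choosing a transcendence basis $t_1,\dots,t_j$ of $K/k$ (which stays one for $K(\ell)/F_\ell$, since $K(\ell)/K$ and $F_\ell/k$ are algebraic) gives $F_\ell(t_1,\dots,t_j)\subseteq K(\ell)$ with both fields of transcendence degree $j$ over $k$; hence $K(\ell)$ is an algebraic extension of $F_\ell(t_1,\dots,t_j)$.

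It then remains to run the classical transfer. Iterating Lang's transcendental theorem \cite{Lang1952QuasiAlgebraicClosure} $j$ times shows $F_\ell(t_1,\dots,t_j)$ is $C_{i+j}$, and since the $C_{i+j}$ property is stable under finite extensions by \cite[Theorem 2a]{Nagata1957NotesPaperLang}, a direct-limit argument (every form over $K(\ell)$ has its coefficients in a finite subextension, where a nontrivial zero already exists) upgrades this to the full algebraic extension $K(\ell)$. As this holds for all $\ell$, Lemma~\ref{lemma Ci0 prime to l} yields that $K$ is $C_{i+j}^0$. The point of this route, as opposed to applying Theorem~\ref{thm constant transfer} with $q=0$, is that the prime-to-$\ell$ closures are \emph{genuine} $C_i$ fields, so the Tsen--Lang--Nagata theory applies unconditionally in every characteristic and no bound on $[k:k^p]$ is required.

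The main obstacle I anticipate is precisely the compatibility of the prime-to-$\ell$ closures along $K/k$: prime-to-$\ell$-ness of a finite extension is \emph{not} stable under base change, since an irreducible polynomial of degree prime to $\ell$ can acquire over $K$ an irreducible factor of degree divisible by $\ell$. One therefore cannot naively assert $k(\ell)\subseteq K(\ell)$. The Sylow observation above is exactly what repairs this, because a Sylow pro-$\ell$ subgroup of the smaller group $\mathrm{Gal}(K^s/K)$ restricts into \emph{some} Sylow pro-$\ell$ subgroup of $\mathrm{Gal}(k^s/k)$, guaranteeing that $K(\ell)$ contains a conjugate of $k(\ell)$. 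The only subtlety left is that $k(\ell)$ is well defined only up to conjugacy, but this is harmless: conjugate fields are $k$-isomorphic and the $C_i$ property is an isomorphism invariant, so replacing $k(\ell)$ by $F_\ell$ costs nothing.
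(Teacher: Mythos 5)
Your proposal is correct and follows essentially the same route as the paper: both reduce the statement via Lemma~\ref{lemma Ci0 prime to l} to the classical $C_i$ theory of prime-to-$\ell$ closures and then invoke the Lang--Nagata transfer, which is exactly why no bound on $[k:k^p]$ is needed here. The only organizational difference is that the paper sidesteps the base-change subtlety you isolate by working with the compositum $l(\ell)L$ (where $l$ is the algebraic closure of $k$ in a finite extension $L/K$) and observing that every finite subextension of $l(\ell)L/L$ automatically has degree prime to $\ell$, whereas you establish the containment $F_\ell\subseteq K(\ell)$ directly by a Sylow pro-$\ell$ argument; both are valid.
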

    \begin{proof}
        Let $L/K$ be a finite extension. Denote by $l$ the algebraic closure of $k$ in $L$. Let $Z \subseteq \P^n_L$ be a hypersurface of degree $d$ such that $d^{i+j} \leq n$. By \cite[Lemma~1]{KatoKuzumaki1986Dimension} it suffices to prove that for every prime number $\ell$ the hypersurface $Z$ admits a rational point on a finite extension $L_{\ell}/L$ of degree prime to $\ell$. Let $l(\ell)/l$ be a maximal prime to $\ell$ extension. By \cite[Lemma~1]{KatoKuzumaki1986Dimension} we have that $l(\ell)$ is $C_i$. We can use \cite[Theorem 2a]{Nagata1957NotesPaperLang} to deduce that the composite extension $l(\ell)L$ is $C_{i+j}$. In particular, there exists a finite subextension $L_{\ell}/L$ of $l(\ell)L / L$ such that $Z(L_{\ell})\neq\emptyset$. Since any subextension of $l(\ell)L / L$ has degree prime to $\ell$ and this can be done for every prime number, we conclude that $Z$ admits a zero-cycle of degree $1$. 
    \end{proof}
        
    \noindent Unlike the $q=0$ case, we cannot expect that ``if $k$ satisfies $C_i^q$, then $k(x)$ and $k(\!(x)\!)$ satisfies $C_{i+1}^q$'' for $q \geq 1$. To see this we need the following lemma:
    \begin{lemma}\label{lemma going-down KK properties}
        Let $k$ be a field and $p$ be either a prime number or $1$. Assume that the field of rational functions $k(x)$ or the field of Laurent series $k(\!(x)\!)$ satisfies $C_{i+1}^q$ away from $p$ with $q \geq 1$. Then, $k$ satisfies $C_{i+1}^{q-1}$ away from $p$.
    \end{lemma}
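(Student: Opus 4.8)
The plan is to prove the statement directly. To show that $k$ satisfies $C_{i+1}^{q-1}$, fix a finite extension $k'/k$, a hypersurface $Z \subseteq \P^n_{k'}$ of degree $d$ with $d^{i+1}\leq n$, and an arbitrary class $\alpha \in \mathrm{K}_{q-1}(k')$; the goal is to show $\alpha \in \mathrm{N}_{q-1}(Z/k')$. The key device is to promote $\alpha$ to a degree-$q$ class using the coordinate $x$. In both cases set $K := k'\lau{x}$, a complete discrete valuation field with residue field $k'$ and uniformiser $x$; in the rational function field case $K$ is the completion of $k'(x)$ at the place $x=0$, and the associated residue map $\partial_{x=0}\colon \mathrm{K}_q(k'(x)) \to \mathrm{K}_{q-1}(k')$ factors through $\mathrm{K}_q(K)$. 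Put $\beta := \{x, \alpha|_{K}\} \in \mathrm{K}_q(K)$. Since the residue map is characterised by $\partial(\{x, u_2, \dots, u_q\}) = \{\overline{u_2}, \dots, \overline{u_q}\}$ and every unit coming from $k'$ reduces to itself, additivity gives $\partial(\beta) = \alpha$.

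Next I would invoke the hypothesis. Base changing $Z$ to $k'(x)$ (respectively to $K$) yields a hypersurface of the same degree $d$ in $\P^n$ with $d^{i+1}\leq n$, and $k'(x)/k(x)$ (respectively $K/k\lau{x}$) is finite. Hence the assumed $C_{i+1}^q$ property yields $\mathrm{N}_q(Z/k'(x)) = \mathrm{K}_q(k'(x))$ (respectively over $K$). In particular there are finite extensions $L_1, \dots, L_r$ of $k'(x)$ (respectively of $K$), classes $\gamma_j \in \mathrm{K}_q(L_j)$, and a relation $\{x, \alpha\}= \sum_{j} \mathrm{N}_{L_j/(\cdot)}(\gamma_j)$ in which each $L_j$ satisfies $Z(L_j) \neq \emptyset$.

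Then I would descend by taking residues. In the Laurent series case each $L_j/K$ is itself a complete discrete valuation field with residue field $l_j$ finite over $k'$, and the left-hand commutative square of Section~\ref{sec: Preliminaries and notation} relating $\partial$ with the norm gives $\partial(\mathrm{N}_{L_j/K}(\gamma_j)) = \mathrm{N}_{l_j/k'}(\partial_{L_j}(\gamma_j))$. In the rational function field case one first maps the identity $\{x,\alpha\} = \sum_j \mathrm{N}_{L_j/k'(x)}(\gamma_j)$ into $\mathrm{K}_q(K)$ along the completion $k'(x) \hookrightarrow K$: decomposing $L_j \otimes_{k'(x)} K$ into Artinian local algebras with residue fields $L_{j,s}$ (the completions of $L_j$ at the places above $x=0$) and multiplicities $e_{j,s}$, \cite[Lemma 7.3.6]{GilleSzamuely2017CentralGalois} rewrites the norm from $L_j$ as $\sum_s e_{j,s}\mathrm{N}_{L_{j,s}/K}$, after which the same Henselian square applies to each $L_{j,s}$. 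In both cases, applying $\partial$ to $\beta$ and to the right-hand side gives
\begin{equation*}
    \alpha \;=\; \partial(\beta) \;=\; \sum_{j,s} e_{j,s}\,\mathrm{N}_{l_{j,s}/k'}\!\bigl(\partial_{L_{j,s}}(\gamma_j)\bigr),
\end{equation*}
with the convention $e_{j,s}=1$ and a single index $s$ in the Laurent series case. It remains to see that each residue field $l_{j,s}$ carries a point of $Z$: the valuation ring of $L_{j,s}$ (or of $L_j$) is a discrete valuation ring containing $k'$, whose fraction field carries a point of $Z$ and whose residue field is $l_{j,s}$, so the valuative criterion of properness for the proper $k'$-scheme $Z$ produces $Z(l_{j,s}) \neq \emptyset$. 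By transitivity of the norm and the definition of $\mathrm{N}_{q-1}(Z/k')$, each summand then lies in $\mathrm{N}_{l_{j,s}/k'}(\mathrm{K}_{q-1}(l_{j,s})) \subseteq \mathrm{N}_{q-1}(Z/k')$, whence $\alpha \in \mathrm{N}_{q-1}(Z/k')$. As $\alpha$, $Z$, and $k'$ were arbitrary, $k$ satisfies $C_{i+1}^{q-1}$.

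The two genuinely load-bearing points are the compatibility of the residue map with norms and the geometric descent of rational points to residue fields. The Laurent series case is essentially immediate from the Henselian square of Section~\ref{sec: Preliminaries and notation}; the rational function field case requires the extra step of passing to the completion at $x=0$ and bookkeeping the (possibly inseparable) decomposition of $L_j\otimes_{k'(x)}K$ via \cite[Lemma 7.3.6]{GilleSzamuely2017CentralGalois}, exactly in the style of the proof of Theorem~\ref{thm: transferability under finite alg extension}. The main obstacle I anticipate is ensuring that $Z(L_j)\neq\emptyset$ really yields $Z(l_{j,s})\neq\emptyset$; this is supplied cleanly by the valuative criterion, since every valuation in play is trivial on $k'$, so its ring is a $k'$-algebra and $Z$ is proper over $k'$.
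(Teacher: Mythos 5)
Your proof is correct and follows essentially the same route as the paper's: lift $\alpha$ to a degree-$q$ class with residue $\alpha$ at $x=0$, apply the $C_{i+1}^q$ hypothesis over $k'(x)$ (resp.\ $k'\lau{x}$), push the resulting norm decomposition down via the compatibility of residues with norms, and use the valuative criterion of properness to get points of $Z$ over the residue fields. The only differences are cosmetic: you take the explicit lift $\{x,\alpha\}$ where the paper invokes surjectivity in Milnor's exact sequence, and you route the norm--residue compatibility through the completion and \cite[Lemma 7.3.6]{GilleSzamuely2017CentralGalois} where the paper cites \cite[Corollary 7.4.3]{GilleSzamuely2017CentralGalois} directly.
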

    \begin{proof}
    Since the proof for $k(\!(t)\!)$ is very similar, we only prove the statement for $k(x)$. \par
    Let $l/k$ be a finite extension and $Z \subseteq \P^n_l$ a hypersurface of degree $d$ such that $d^{i+1}\leq n$ and $\alpha \in \KK_{q-1}(l)$. Since the residue map $\partial:\KK_q(l(x)) \to \KK_{q-1}(l)$ with respect to the $x$-adic valuation is surjective, there exists a class $\beta \in \KK_q(l(x))$ such that $\partial(\beta)=\alpha$. Moreover, there exist finite extensions $L_1,\hdots,L_m/l(x)$ and $\gamma_j \in \KK_q(L_j)$ for every $j \in \{1,\dots , m\}$ such that $Z(L_j)\neq \emptyset$ and $p^s\beta = \prod_{j=1}^m\mathrm{N}_{L_j/l(x)}(\gamma_j)$ for some $s\in \NN$ because $l(x)$ satisfies $C_{i+1}^q$ away from $p$. The $x$-adic valuation extends to discrete valuations $v_{j,1},\hdots , v_{j,r_j}$ over $L_j$. Denote by $l_{j,u}$ the residue field of $L_{j}$ and by $\partial_{j,u}:\KK_q(L_j) \to \KK_{q-1}(l_{j,u})$ the residue map both with respect to the valuation $v_{j,u}$. By compatibility of the residues with the norm \cite[Corollary 7.4.3]{GilleSzamuely2017CentralGalois}, we have that 
    \begin{equation*}
        p^s \alpha = \prod_{j=1}^m \prod_{u=1}^{r_j}\mathrm{N}_{l_{j,u}/l}(\partial_{j,u}(\gamma_j)).
    \end{equation*}
    Moreover, the valuative criterion for properness ensures that $Z(l_{j,u})\neq \emptyset$, and hence the result follows.
    \end{proof}
        \begin{rmk}
        The optimistic statement ``if $k$ satisfies $C_i^q$, then $k(x)$ and $k(\!(x)\!)$ satisfies $C_{i+1}^q$'' is false for $q \geq 1$. Indeed, assume that this statement is true. In \cite{CTMadore2004NotC10} we can find a construction of a field of cohomological dimension $1$ that is not $C_1^0$, but the optimistic statement together with Lemma \ref{lemma going-down KK properties} ensure that $C^1_0$ implies $C_1^0$. \par
        A more reasonable expectation would be that if $k$ satisfies $C_i^q$ and $C_{i-1}^{q+1}$, then $k(x)$ and $k(\!(x)\!)$ satisfies $C_{i+1}^q$. In the latter case this is known when $k$ is perfect and when restricting to hypersurfaces of prime degree by \cite[Theorem 1 (3)]{KatoKuzumaki1986Dimension}. Or less optimistically, that if $k$ satisfies Kato-Kuzumaki's conjecture for all non-negative integers $i,q$, then so do $k(x)$ and $k(\!(x)\!)$.
    \end{rmk}
    \subsection{Transfer results for non-constant classes}
    Complementary to the previous section, in this section we consider the behaviour of the non-constant classes of transcendental extensions. Namely, we have the following theorem:
    \begin{thm}\label{thm non-contant transfer}
        Suppose that $k$ satisfies $C_i^q$. If $k$ has positive characteristic we further assume that $[k:k^p] \leq p^{i+1}$. Then, the field $k(x)$ satisfies $C_{i+1}(\alpha)$ for every element $\alpha \in \mathrm{K}_{q+1}(k(x))$.
    \end{thm}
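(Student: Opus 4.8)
The plan is to study the norm group $N:=\mathrm{N}_{q+1}(Z/k(x))$ of a hypersurface $Z\subseteq\P^n_{k(x)}$ of degree $d$ with $d^{i+1}\le n$ by means of Milnor's exact sequence
\[0\to\mathrm{K}_{q+1}(k)\to\mathrm{K}_{q+1}(k(x))\xrightarrow{(\partial_p)}\bigoplus_{p\in\mathbf{A}^1_{k,(0)}}\mathrm{K}_q(k(p))\to0.\]
Since the kernel of $(\partial_p)$ is exactly the image of the constant classes $\mathrm{K}_{q+1}(k)$, it suffices to establish two facts: (i) $N$ contains this constant subgroup, and (ii) $(\partial_p)(N)=\bigoplus_p\mathrm{K}_q(k(p))$. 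Indeed (i) gives $\ker(\partial_p)\subseteq N$, hence $N=(\partial_p)^{-1}\big((\partial_p)(N)\big)$, and then (ii) forces $N=\mathrm{K}_{q+1}(k(x))$, which is precisely $\alpha\in N$ for every $\alpha\in\mathrm{K}_{q+1}(k(x))$, i.e. $C_{i+1}(\alpha)$.

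For (i) I would reduce to the constant transfer. A constant symbol $\{a_0,\dots,a_q\}$ with $a_j\in k^\times$ factors as $\{a_0,\{a_1,\dots,a_q\}\}$; as $k$ satisfies $C_i^q$ it satisfies $C_i(\{a_1,\dots,a_q\})$, so Theorem~\ref{thm constant transfer}—whose characteristic-$p$ hypothesis $[k:k^p]\le p^{i+1}$ is exactly ours—yields $\{a_1,\dots,a_q\}|_{k(x)}\in\mathrm{N}_q(Z/k(x))$, and the projection formula $\{a_0,\mathrm{N}_q(Z/k(x))\}\subseteq\mathrm{N}_{q+1}(Z/k(x))$ places $\{a_0,\dots,a_q\}$ in $N$. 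Running over such generators (for $q=0$ one applies Theorem~\ref{thm constant transfer} directly to $1\in\mathrm{K}_0(k)$) gives $\mathrm{K}_{q+1}(k)|_{k(x)}\subseteq N$.

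The substance is (ii), which I would treat one point at a time. Fix a closed point $p$ with residue field $k(p)$; it is finite over $k$, hence again satisfies $C_i^q$, and $[k(p):k(p)^p]=[k:k^p]\le p^{i+1}$. Passing to the completion $k(x)_p\cong k(p)\lau{\tau}$, the base change of $Z$ still has $d^{i+1}\le n$, so the constant part of Proposition~\ref{prop constant transfer Laurent} over $k(p)$ together with Remark~\ref{rmk only constant extensions Laurent} writes any $\gamma\in\mathrm{K}_q(k(p))$ as $\gamma=\prod_s\mathrm{N}_{l_s/k(p)}(\eta_s)$ with $l_s/k(p)$ finite, $\eta_s\in\mathrm{K}_q(l_s)$, and $Z(l_s\lau{\tau})\neq\emptyset$; crucially only the constant (unramified) part is used, so $[k:k^p]\le p^{i+1}$ suffices and the stronger $p^{\max\{i,q\}}$ condition is never needed. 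I would then descend to global data: by the valuative criterion of properness each $l_s\lau{\tau}$-point extends to an $l_s[\![\tau]\!]$-point of a projective model, and Greenberg's theorem (\cite{Greenberg1966RationalPointsHenselian}; equivalently Artin approximation over the excellent Henselian local ring of $\mathbf{A}^1_k$ at $p$) furnishes a finite extension $L_s/k(x)$, unramified at $p$ with residue field $l_s$ at a place $w_s\mid p$, such that $Z(L_s)\neq\emptyset$. Using the residue (Weil reciprocity) sequence for $L_s$ and absorbing the reciprocity defect into the places above $\infty$—which the affine residue sequence for $k(x)$ does not record—I would pick $\xi_s\in\mathrm{K}_{q+1}(L_s)$ whose residues over finite points are concentrated at $w_s$ with $\partial_{w_s}\xi_s=\eta_s$. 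Then $\mathrm{N}_{L_s/k(x)}(\xi_s)\in N$, and the norm–residue compatibility \cite[Corollary~7.4.3]{GilleSzamuely2017CentralGalois} shows $\prod_s\mathrm{N}_{L_s/k(x)}(\xi_s)\in N$ has residue $\gamma$ at $p$ and $0$ at every other finite point. Such single-support elements generate the whole right-hand side, giving (ii).

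The main obstacle is precisely this local-to-global descent: realizing an abstract residue class as the residue of an \emph{honest} element of the global norm group requires finite extensions of $k(x)$ that carry actual $k(x)$-points of $Z$, not merely points of the fibre over $p$ (a single special-fibre point need not lift). This is where Proposition~\ref{prop constant transfer Laurent} is indispensable—it manufactures genuine points over $l_s\lau{\tau}$—and where Greenberg/Artin approximation performs the descent. A secondary technical point is the single-support arrangement, which I would handle by exploiting that the residue at infinity is unconstrained in the affine sequence; the characteristic-$p$ bookkeeping is benign exactly because only the constant part of the Laurent transfer is invoked.
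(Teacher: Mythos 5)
Your overall skeleton is the same as the paper's: use Milnor's exact sequence for $k(x)$, show that $N=\mathrm{N}_{q+1}(Z/k(x))$ contains the constant classes, and show that $(\partial_p)(N)=\bigoplus_p\KK_q(k(p))$. Your step (i) is correct and is essentially how the paper concludes (the projection-formula argument is the standard proof that $C_i^q$ implies $C_i^{q+1}$, after which Theorem~\ref{thm constant transfer} applies). The problem is in step (ii), at the local-to-global descent.

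After producing points of $Z$ over $l_s\lau{\tau}$ and descending via Greenberg/Artin approximation, the finite extensions $L_s/k(x)$ you obtain are arbitrary: they are function fields of curves over finite extensions of $k$ that have no reason to be rational. Your next move requires an element $\xi_s\in\KK_{q+1}(L_s)$ whose residues vanish at every finite place except $w_s$, where the residue is the prescribed $\eta_s$. That is the surjectivity of $\KK_{q+1}(L_s)\to\bigoplus_{w\nmid\infty}\KK_q(L_s(w))$, which is exactly what Milnor's exact sequence provides for a \emph{rational} function field and which fails for general curves: already for $q=0$ the cokernel of this map is $\mathrm{Pic}$ of the affine curve, so concentrating the residue at $w_s$ amounts to asking that $\eta_s\cdot[w_s]$ be principal there — false in general (think of an elliptic curve minus the fibre over $\infty$). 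The reciprocity defect cannot simply be ``absorbed at infinity''. The paper sidesteps this entirely: Remark~\ref{rmk only constant extensions}, applied to the global rational function field $k(p_j)(x)$ (no completion, no approximation theorem needed), already produces \emph{constant} extensions $l_{jr}(x)/k(x)$ with $Z(l_{jr}(x))\neq\emptyset$, and for $l_{jr}(x)$ Milnor's exact sequence does give the required surjectivity of the residue map, after which the norm–residue compatibility finishes the computation. So the fix is not to pass through the completion at all, but to keep the auxiliary extensions constant.
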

    \begin{proof}
        Let $Z \subseteq \mathbf{P}^n_{k(x)}$ be a hypersurface of degree $d$ with $d^{i+1}\leq n$, and $\alpha \in \mathrm{K}_{q+1}(k(x))$. We use the notation described in Section \ref{sec Milnor rational functions}. Let $p_1,\hdots , p_n \in \mathbf{A}^1_{k,(0)}$ and $\beta _j \in \mathrm{K}_q(k(p_j))$ be such that $\partial_{p_j}(\alpha) =  \beta_j$ and $\partial_p(\alpha) = 1$ for any $p\not \in \{p_1,\hdots, p_n\}$. We know that $k(p_j)(x)$ satisfies $C_{i+1}(\beta_j |_{k(p_j)(x)})$ for every $1\leq j \leq n$, moreover, we only need constant extensions thanks to Remark \ref{rmk only constant extensions}. Then, for every $1\leq j\leq n$ there exist finite extensions $l_{j,1}, \hdots , l_{j,m_j} / k(p_j)$ and elements $\gamma_{jr} \in \mathrm{K}_{q}(l_{jr})$ such that $Z(l_{jr}(x)) \neq \emptyset$ and 
        \[ 
        \beta_j = \prod_{r=1}^{m_j} \mathrm{N}_{l_{jr}/k(p_j)}(\gamma_{jr}).
        \]
        Milnor's exact sequence ensures that the map 
        \[
        \partial: \mathrm{K}_{q+1}(l_{jr}(x)) \to \bigoplus_{q \in \mathbf{A}^1_{l_{jr},(0)}}\mathrm{K}_{q}(l_{jr}(q))
        \]
        is surjective. In particular, for every $1\leq j \leq n$ and $1\leq r\leq m_j$ there exists a class $\tilde{\gamma}_{jr} \in \mathrm{K}_{q+1}(l_{jr}(x))$ such that $\partial_{p_j'}(\tilde{\gamma}_{jr}) = \gamma_{jr}$ and $\partial_p(\tilde{\gamma}_{jr}) = 1$ for any $p\neq p_j'$ where $p_j'$ is a rational point of $\mathbf{A}^1_{l_{jr}}$ above $p_j$. We deduce that 
        \begin{align*}
        \partial_{p_j}\left( \prod_{r=1}^{m_j} \mathrm{N}_{l_{jr}(x)/k(x)}(\tilde{\gamma}_{jr})\right) & = \prod_{r=1}^{m_j} \mathrm{N}_{l_{jr}/k(p_j)}(\partial_{p_j'}(\tilde{\gamma}_{jr}))\\
        &= \prod_{r=1}^{m_j} \mathrm{N}_{l_{jr}/k(p_j)}(\gamma_{jr}) \\
        &= \beta_j.
    \end{align*}
    Moreover, for any $p\neq p_j$ we have $\partial_{p}\left( \prod_{r=1}^{m_j} \mathrm{N}_{l_{jr}(x)/k(x)}(\tilde{\gamma}_{jr})\right) = 1$. Therefore by Milnor's exact sequence we have that 
    \[\alpha \cdot \left(\prod_{j=1}^n \prod_{r=1}^{m_j} \mathrm{N}_{l_{jr}(x) /k(x)}(\tilde{\gamma}_{jr}) \right)^{-1}\in \mathrm{K}_q(k).\]
    Since $k$ also satisfies $C_i^{q+1}$, by Theorem~\ref{thm constant transfer} we have that $k(x)$ satisfies $C_{i+1}(\alpha_0|_{k(x)})$ for any $\alpha_0 \in \mathrm{K}_{q+1}(k)$, and hence we conclude that $\alpha$ belongs to $\mathrm{N}_{q+1}(Z/k(x))$.
    \end{proof}
    \begin{rmk}
     The previous proof also works for a more general family of curves, but the hypothesis is difficult to verify. Let $k$ be a field and $C$ a geometrically integral smooth curve over $k$. The argument of Theorem \ref{thm non-contant transfer} also works when the map
    \[ \KK_{q+1}(k(C)) \to \bigoplus_{p\in C_{(0)}} \KK_{q}(k(p))\]
    is surjective and its kernel is the image of $\KK_q(k)$.
    \end{rmk}

\subsection{Transfer Principles for Laurent Series}

In this section we obtain an analogue of Theorem \ref{thm constant transfer} for Laurent series. To prove this result, we need the following refinement of \cite[Theorem~1]{Greenberg1966RationalPointsHenselian}.
    \begin{lemma}\label{lemma refined Greenberg}
        Let $R$ be a Henselian discreetly valued field, $K$ its fraction field, and $t$ a uniformiser. Let $\underline{F}=(F_1,\dots,F_r)$ be an $r$-tuple of polynomials in $R[X_1,\dots, X_n]$. Denote by $I$ the ideal generated by $\underline{F}$ in $R[X_1,\dots,X_n]$. If $K$ has positive characteristic, we further assume that the extension $\hat{K}/K$ is separable where $\hat{K}$ is the completion of $K$. Then, there are integers $N\geq 1$, $c \geq 1$ and $s\geq 0$ only depending on $I$ such that for every unramified extension $R'/R$, integer $\nu\geq N$ and $x_1,\dots, x_n \in R'$ such that 
        \[
            \underline{F}(x_1,\dots,x_n) \equiv 0  \mod t^{\nu},
        \]
        there exists $y_1,\dots, y_n \in R'$ such that\begin{align*}
            y_i & \equiv x_i \mod t^{\lfloor\nu/c\rfloor - s} \\
            0 & = \underline{F}(y_1,\dots, y_n).
        \end{align*}
    \end{lemma}
    \noindent Lemma~\ref{lemma refined Greenberg} can be proved in an identical manner to that of \cite[Theorem~1]{Greenberg1966RationalPointsHenselian} by strengthening the inductive hypothesis.
\begin{cor}\label{cor constant transfer Laurent}
        Let $k$ be a field. Let $q$ be a non-negative integer and $\alpha \in \mathrm{K}_q(k)$. Assume that $k$ satisfies $C_i(\alpha)$ for some non-negative integer $i$. Then, $K\coloneq k(\!(t)\!)$ satisfies $C_{i+1}(\alpha|_{K})$. If we assume $[k:k^p]\leq p^{i}$ when $k$ is of positive characteristic $p$, then $K$ satisfies $C_{i+1}(\beta)$ for every $\beta \in \mathrm{U}_q(K)$ that maps to $\alpha$ along the specialisation map $s:\mathrm{U}_q(K) \to \mathrm{K}_q(k)$.
    \end{cor}
    \begin{proof}
        Let $Z \subseteq\mathbf{P}^n_K$ be a hypersurface of degree $d$ such that $d^{i+1} \leq n$. For every $\nu\in \mathbf{N}$ we can find a hypersurface $Z^{(\nu)}\subseteq \mathbf{P}^n_{k(t)}$ of degree $d$ with the same reduction modulo $t^{\nu}$ as $Z$. By Theorem~\ref{thm constant transfer} and Remark~\ref{rmk only constant extensions} there exist finite extensions $l_1,\hdots , l_r /k$ and elements $\alpha_j \in \mathrm{K}_q(l_j)$ for every $1\leq j\leq r$ such that $Z^{(\nu)}(l_j(t)) \neq \emptyset$ and $\alpha = \prod_{j=1}^r \mathrm{N}_{l_j/k}(\alpha_j)$. By Lemma~\ref{lemma refined Greenberg}, for sufficiently large $\nu$ we have that $Z(l_j(\!(t)\!)) \neq \emptyset$, for every $1\leq j \leq r$. By \cite[Lemma 7.3.6]{GilleSzamuely2017CentralGalois} we deduce that $\alpha|_{K}\in\mathrm{N}_q(Z/K)$. \par 
        Keeping the notation from the previous paragraph and assuming that $k$ satisfies $[k:k^p]\leq p^{i}$ if it has characteristic $p$, we claim that
        \begin{equation*}
            \mathrm{U}_{q}^1(K)\subset \operatorname{N}_q(Z/K).
        \end{equation*}
         Indeed, if $k$ is characteristic zero, then since $\mathrm{U}_{q}(K)$ is $\ell$-divisible for all primes $\ell$, we have $\mathrm{U}_{q}^1(K)\subset \operatorname{N}_q(Z/K)$. Moreover, if $k$ has positive characteristic $p$, then by Proposition~\ref{prop separable closed Cdelta} and \cite[Theorem~2]{Greenberg1966RationalPointsHenselian} the field $k^s\lau{t}$ is $C_{i+1}$, in particular, we have that $Z^{(\nu)}(k^{s}\lau{t})\neq\varnothing$ for all $\nu\in \NN$, and hence we have $Z(k^{s}\lau{t})\neq\varnothing$. Therefore by Lemma~\ref{lem: Z has K^sep point implies U^1(K) norm of Z} we have $\mathrm{U}_{q}^1(K)\subset \operatorname{N}_q(Z/K)$. \par
        
    Now let $\beta \in \mathrm{U}_q(K)$ be such that $s(\beta) = \alpha$. We have the following commutative diagram with exact rows
        \begin{equation} \label{diag norms and unity groups}
            \begin{tikzcd}
                0 \ar[r] & \prod_{j=1}^r\mathrm{U}_{q}^1(l_j(\!(t)\!)) \ar[r] \ar[d] & \prod_{j=1}^r\mathrm{U}_{q}(l_j(\!(t)\!)) \ar[r] \ar[d, "\prod_{j=1}^r\mathrm{N}_{l_j(\!(t)\!)/K}"] & \prod_{j=1}^r\mathrm{K}_{q}(l_j) \ar[r] \ar[d, "\prod_{j=1}^r  \mathrm{N}_{l_j/k}"] & 0 \\
                0 \ar[r] & \mathrm{U}_{q}^1(K) \ar[r]  & \mathrm{U}_{q}(K) \ar[r]  & \mathrm{K}_{q}(k) \ar[r] & 0.
            \end{tikzcd}
        \end{equation}
  In particular, for every $1\leq j \leq r$ there exists a class $\beta_j \in \KK_q(l_j\lau{t})$ such that $s(\beta_j) = \alpha_j$ for $\alpha_j$ as above. A simple diagram chase in diagram \ref{diag norms and unity groups} ensures that $\beta^{-1} \cdot \prod_{j=1}^r \mathrm{N}_{l_j\lau{t}/K}(\beta_j)$ belongs to $\mathrm{U}^1_q(K)$. Therefore since $\mathrm{U}^1_q(K)\subset \mathrm{N}_q(Z/K)$ the result follows.
    \end{proof}

    \begin{rmk}\label{rmk only constant extensions Laurent}
        Note that in the same spirit of Remark \ref{rmk only constant extensions} we only need constant extensions. Keep the notation from Corollary~\ref{cor constant transfer Laurent}. The last proof ensures that $K$ satisfies the following property: For any hypersurface $Z \subseteq \mathbf{P}^n_k$ of degree $d$ such that $d^{i+1}\leq n$ there exists a family $l_1,\hdots , l_r/k$ of finite extensions and classes $\beta_j \in \mathrm{K}_q(l_j)$ such that $Z(l_j(\!(t)\!))\neq \emptyset$ for every $1\leq j\leq r$ and $\alpha = \prod_{j=1}^{r} \mathrm{N}_{l_j/k}(\beta_j)$.
    \end{rmk}

    \noindent A slightly simpler proof than Theorem~\ref{thm non-contant transfer} gives a stronger result for Laurent series in characteristic zero.
    \begin{prop}\label{prop non-constant transfer Laurent}
        Let $k$ be a field of characteristic zero. Assume that $k$ satisfies $C_i^q$. Then, the field $k(\!(x)\!)$ satisfies $C_{i+1}^{q+1}$.
    \end{prop}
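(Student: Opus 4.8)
The plan is to treat this as the Laurent-series analogue of Theorem~\ref{thm non-contant transfer}; it is in fact cleaner, since $K := k(\!(x)\!)$ has the single residue field $k$ rather than the infinitely many residue fields $k(p)$ of $\mathbf{A}^1_k$. The Milnor $\KK$-theory of $K$ carries the two-step filtration $\mathrm{U}_{q+1}^1(K) \subseteq \mathrm{U}_{q+1}(K) \subseteq \KK_{q+1}(K)$ with $\KK_{q+1}(K)/\mathrm{U}_{q+1}(K) \xrightarrow{\sim} \KK_q(k)$ via the residue map $\partial$ and $\mathrm{U}_{q+1}(K)/\mathrm{U}_{q+1}^1(K) \xrightarrow{\sim} \KK_{q+1}(k)$ via the specialisation map $s$, and I would show that $\mathrm{N}_{q+1}(Z/K)$ accounts for all three graded pieces. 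First I would reduce to verifying $\KK_{q+1}(K) = \mathrm{N}_{q+1}(Z/K)$ for every hypersurface $Z \subseteq \mathbf{P}^n_K$ of degree $d$ with $d^{i+1} \leq n$: every finite extension $L/K$ is itself a Laurent series field $k'(\!(y)\!)$ over a finite extension $k'/k$ (structure theory of complete discretely valued fields of equal characteristic zero), and $k'$, being finite over $k$, again satisfies $C_i^q$; since the assertion for $L$ has the same shape with $k$ replaced by $k'$, the quantification over finite extensions hidden in $C_{i+1}^{q+1}$ is absorbed by letting the base field vary.

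Fix such a $Z$. Since $n \geq d^{i+1} \geq 1$ the hypersurface has a point over $\overline{K} = K^{s}$ (as $\operatorname{char} k = 0$), so Lemma~\ref{lem: Z has K^sep point implies U^1(K) norm of Z} and the remark following it give $\mathrm{U}_{q+1}^1(K) \subseteq \mathrm{N}_{q+1}(Z/K)$ immediately, disposing of the bottom piece. Next, for an arbitrary $\alpha \in \KK_{q+1}(K)$ I would realise its residue $\beta := \partial(\alpha) \in \KK_q(k)$ as the residue of a genuine norm. As $k$ satisfies $C_i(\beta)$, Proposition~\ref{prop constant transfer Laurent} together with Remark~\ref{rmk only constant extensions Laurent} yields finite extensions $l_1,\dots,l_r/k$ and classes $\beta_j \in \KK_q(l_j)$ with $Z(l_j(\!(x)\!)) \neq \emptyset$ and $\beta = \prod_j \mathrm{N}_{l_j/k}(\beta_j)$. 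Lifting each $\beta_j$ to $\widetilde{\beta}_j := \{x,\beta_j\} \in \KK_{q+1}(l_j(\!(x)\!))$, whose residue is $\beta_j$, and setting $\tau := \prod_j \mathrm{N}_{l_j(\!(x)\!)/K}(\widetilde{\beta}_j)$, the compatibility of $\partial$ with the norm recorded in Section~\ref{sec: Preliminaries and notation} gives $\partial(\tau) = \prod_j \mathrm{N}_{l_j/k}(\beta_j) = \beta$, while $Z(l_j(\!(x)\!)) \neq \emptyset$ shows $\tau \in \mathrm{N}_{q+1}(Z/K)$.

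It then remains to handle the middle piece. After the previous step $\alpha\tau^{-1} \in \mathrm{U}_{q+1}(K)$, so I would put $\gamma := s(\alpha\tau^{-1}) \in \KK_{q+1}(k)$. The monotonicity $C_i^q \Rightarrow C_i^{q+1}$ — which holds because norm-group surjectivity in degree $q$ propagates to degree $q+1$ via the projection formula $\{a,\mathrm{N}_{l/k}(\eta)\} = \mathrm{N}_{l/k}(\{a|_l,\eta\})$, the symbols $\{a,\cdot\}$ generating $\KK_{q+1}$ — shows that $k$ satisfies $C_i(\gamma)$, so Proposition~\ref{prop constant transfer Laurent} gives $\gamma|_K \in \mathrm{N}_{q+1}(Z/K)$, with $\partial(\gamma|_K) = 1$ and $s(\gamma|_K) = \gamma$. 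Consequently $\delta := \alpha\tau^{-1}(\gamma|_K)^{-1}$ lies in $\mathrm{U}_{q+1}(K)$ with $s(\delta) = \gamma\gamma^{-1} = 1$, hence in $\mathrm{U}_{q+1}^1(K) \subseteq \mathrm{N}_{q+1}(Z/K)$; writing $\alpha = \delta\,\tau\,(\gamma|_K)$ as a product of three elements of the subgroup $\mathrm{N}_{q+1}(Z/K)$ concludes the argument.

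The step I expect to require the most care is the residue piece: it is crucial that $\beta$ decompose as $\prod_j \mathrm{N}_{l_j/k}(\beta_j)$ using only \emph{constant} extensions $l_j/k$, for only then does $x$ remain a uniformiser of $l_j(\!(x)\!)$, so that $\partial_{l_j(\!(x)\!)}(\{x,\beta_j\}) = \beta_j$ and the residue–norm square may be applied without ramification corrections. This is exactly the strengthening supplied by Remark~\ref{rmk only constant extensions Laurent}, and is the reason the bare statement of Proposition~\ref{prop constant transfer Laurent} does not suffice. By contrast, the bottom and middle pieces follow directly from Lemma~\ref{lem: Z has K^sep point implies U^1(K) norm of Z} and the transfer for constant classes, respectively.
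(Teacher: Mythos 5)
Your argument is correct and follows essentially the same route as the paper: reduce to the base Laurent series field via the structure of finite extensions of $k(\!(x)\!)$, kill the residue of $\alpha$ using Remark~\ref{rmk only constant extensions Laurent} and a lift along the surjection $\partial$, and then dispatch the remaining class in $\mathrm{U}_{q+1}(K)$ via Proposition~\ref{prop constant transfer Laurent}. The only difference is cosmetic: you unfold the unit-group step into its $s$- and $\mathrm{U}^1$-pieces by hand (and usefully make explicit the monotonicity $C_i^q \Rightarrow C_i^{q+1}$), whereas the paper delegates exactly this to the second assertion of Proposition~\ref{prop constant transfer Laurent}.
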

    \begin{proof}
        Any finite extension of $k(\!(x)\!)$ is of the form $l(\!(t^{1/m})\!)$ for a finite extension $l/k$, a uniformiser $t$ of the unique extension of the $x$-adic valuation, and a positive integer $m$. We can simplify the notation and set $l=k$, $x=t$ and $m=1$. Consider $\alpha \in \KK_{q+1}(k(\!(x)\!))$. Let $Z \subseteq \mathbf{P}^n_{k(\!(x)\!)}$ be a hypersurface of degree $d$ such that $d^{i+1} \leq n$. By Remark~\ref{rmk only constant extensions Laurent} there exist finite extensions $l_1,\hdots, l_m/k$ and $\beta_j \in \KK_q(l_j)$ such that $Z(l_j(\!(x)\!))\neq \emptyset$ and $\partial(\alpha) = \prod_{j=1}^m\mathrm{N}_{l_j/k}(\beta_j)$. Since the residue map $\partial: \KK_{q+1}(l_j(\!(x)\!)) \to \KK_q(l_j)$ is surjective we can find elements $\tilde{\beta_j} \in \KK_q(l_j(\!(x)\!))$ such that $\partial(\tilde{\beta_j})= \beta_j$. In particular we have
        \[  \alpha \cdot \prod_{j=1}^m \mathrm{N}_{l_j(\!(x)\!)/k(\!(x)\!)}(\tilde{\beta_j}^{-1}) \in \mathrm{U}_{q+1}(k(\!(x)\!)),\]
        and hence by Corollary~\ref{cor constant transfer Laurent} the result follows.
    \end{proof}

    \noindent The proof of Proposition \ref{prop transfer C10} can be adapted to obtain the following result without characteristic restrictions:
    \begin{prop}
        Assume that $k$ satisfies $C_i^0$. Then the field of Laurent series $k(\!(t)\!)$ satisfies~$C_{i+1}^0$.
    \end{prop}

\section{Applications}
\label{sec: Applications}
    In this section we apply the results from the previous sections to obtain new cases for which the $C^q_i$ property holds. By using Theorem~\ref{thm constant transfer} we obtain the following general transfer result:
    \begin{thm}\label{thm transfer applications}
        Let $i,q$ be a pair of non-negative integers and $k$ a field of exponent characteristic $p$ such that $k(x_1,\dots, x_q)^{\mathrm{perf}}$ satisfies $C_i^q$. Then, for all non-negative integers $j$ and $q' \leq q$, the field $k(x_1,\dots, x_{q'+j})$ satisfies the $C_{i+j}^{q'}$ property away from $p$.
    \end{thm}
    \begin{proof}
        It would be enough to prove that for every non-negative integer $j$, the field $k(x_1,\dots, x_{q+j})$ satisfies $C_{i+j}^q$ away from $p$. Indeed, a direct application of Lemma~\ref{lemma going-down KK properties} yields that the $C_{i+j}^q$ property away from $p$ for $k(x_1,\dots, x_{q+j})$ implies the $C_{i+j}^{q'}$ property away from $p$ for the field $k(x_1,\dots, x_{q'+j})$.\par
        We consider all algebraic extensions as subextensions of a fixed algebraic closure of $k(x_1,\hdots,x_{q+j})$. Let $K/k(x_1,\hdots,x_{q+j})$ be a finite extension and $\alpha \coloneq \{f_1,\hdots , f_q \} \in \KK_q(K)$ be a non-trivial symbol. Assume that $f_1,\hdots ,f_r$ are algebraically independent over $k$ and $f_{r+1},\hdots ,f_q$ are algebraic over $k(f_1,\hdots , f_r)$. So that $k(f_1,\hdots,f_q)$ is an extension of transcendence degree $r$ over $k$. We can find $g_{r+1},\hdots ,g_q \in K$ such that the extension $K(f_1,\dots, f_q, g_{r+1},\hdots, g_q)/k$ has transcendence degree $q$. Let $K_0$ be the algebraic closure of $K(f_1,\dots, f_q, g_{r+1},\hdots, g_q)$ in $K$. \par
        Fix $\alpha_0 \coloneq \{f_1,\hdots , f_q \} \in \KK_q(K_0)$ and note that $\alpha_0 |_K = \alpha$. By our hypothesis, the field $K_0^{\mathrm{perf}}$ satisfies $C_i(\alpha_0|_{K_0^{\mathrm{perf}}})$. Since the extension $KK_0^{\mathrm{perf}}/K_0^{\mathrm{perf}}$ is finitely generated, regular, and has transcendence degree $j$, Theorem~\ref{thm constant transfer} ensures that $KK_0^{\mathrm{perf}}$ satisfies $C_{i+j}(\alpha_0|_{KK_0^{\mathrm{perf}}})$. \par

        The fact that $KK_0^{\mathrm{perf}}$ satisfies $C_{i+j}(\alpha_0|_{KK_0^{\mathrm{perf}}})$ implies the following: for every hypersurface $Z \subseteq \P^n_{K}$ of degree $d$ such that $d^{i+j} \leq n$, there exist finite extensions $K_1,\dots ,K_r/KK_0^{\mathrm{perf}}$ and $\beta_j \in \KK_q(K_j)$ such that $Z(K_j)\neq \emptyset$ and 
        \[
        \alpha_0|_{K K_0^{\mathrm{perf}}}=\prod_{j=1}^r \Norm_{K_j/KK_0^{\mathrm{perf}}}(\beta_j).
        \] 
        Thus, there exists a large enough finite purely inseparable extension $K_0'/K_0$ together with finite extensions $K_1', \dots, K_r'/KK_0'$ and classes $\beta_j' \in \KK_q(K_j')$ such that for every $j\in \{1,\dots , r\}$ we have
        \begin{itemize}
            \item  $\beta_j' |_{K_j} = \beta_j$, 
            \item  $Z(K_j') \neq \emptyset$, and
            \item  $K_j' \otimes_{KK_0'} KK_0^{\mathrm{perf}}$ is a field.
        \end{itemize}
        Note that the tensor product $K_j' \otimes_{KK_0'} KK_0^{\mathrm{perf}}$ gets naturally identified with $K_j$. Then, \cite[Lemma~7.3.6]{GilleSzamuely2017CentralGalois} ensures that the following diagram is commutative
        \begin{equation*}
            \begin{tikzcd}[row sep = 5ex , column sep = 13ex]
                \prod_{j=1}^r\mathrm{K}_{q}(K_j') \ar[r,"\prod_j \mathrm{N}_{K_j'/KK_0'}"] \ar[d, "\prod_j\iota_j"]&  \mathrm{K}_{q}(KK_0') \ar[d, "\iota"] \\
                \prod_{j=1}^r\mathrm{K}_{q}(K_j) \ar[r, "\prod_j \mathrm{N}_{K_j/KK_0^{\mathrm{perf}}}"]& \mathrm{K}_{q}(KK_0^{\mathrm{perf}}).
            \end{tikzcd}
        \end{equation*}
        We deduce that
        \[
        \alpha_0|_{KK_0'} = \prod_{j=1}^s\Norm_{K_j'/KK_0'}(\beta_j')
        \] 
        because the extension $KK_0^{\mathrm{perf}}/KK_0'$ is purely inseparable and thus, the restriction map $\iota: \mathrm{K}_{q}(KK_0') \to \mathrm{K}_{q}(KK_0^{\mathrm{perf}})$ is injective as stated in Proposition~\ref{prop norm purely inseparable}.\par
        Since $\Norm_{KK_0'/K}(\alpha_0|_{KK_0'}) = [K_0':K_0] \alpha$, we conclude that $[K_0':K_0]\alpha$ belongs to $\Norm_q(Z/K)$. This is enough to conclude that $k(x_1,\dots , x_{q+j})$ satisfies the $C_{i+j}^q$ property away from~$p$.
    \end{proof}
\noindent The following diagram gives a graphic representation of these relations:
    \begin{equation*}
    \begin{tikzcd}[row sep = 3ex, column sep = 3ex] 
    {} & & & & & \\
    C^{i+q}_0 \arrow[u] \arrow[rd, "\ddots", phantom]  & & & & &\\
    & \mathrm{perf}\; C_i^q  \arrow[r, Rightarrow] \arrow[rrr, "\cdots" description, Rightarrow, bend left] \arrow[d, Rightarrow] & C_{i+1}^q \arrow[d, Rightarrow] & \cdots & C_{i+j}^q \arrow[d, Rightarrow] &    \\
    & C_{i}^{q-1} \arrow[dd,"\vdots" description, Rightarrow]& C_{i+1}^{q-1} \arrow[dd,"\vdots" description, Rightarrow] & \cdots  &  C_{i+j}^{q-1} \arrow[dd,"\vdots" description, Rightarrow]& \\
    C_0^i \arrow[rd, "\ddots", phantom] \arrow[uuu, "\vdots" description, no head] &  &  & &  &    \\
    {} \arrow[r,start anchor=center, no head] \arrow[u,start anchor=center, no head] & C_i^0 \arrow[r, no head] \arrow[rd, no head, dotted] & \cdots \arrow[r, no head]& C_{i+q-1}^0 \arrow[r,no head] \arrow[rd, no head, dotted] & C_{i+q}^0 \arrow[r] \arrow[rd, no head, dotted]& {} \\
    & & k &  & k(x_1,\dots,x_{q-1})    & k(x_1,\dots,x_q)  
    \end{tikzcd}
    \end{equation*}

\noindent In the above diagram, the point $(a,b)$ represents the $C_a^b$ property away from $p$ for $k(x_1,\cdots, x_{a+b-i})$ for non-negative integers $(a,b) \neq (i,q)$ such that $a+b \geq i$ and the point $(i,q)$ represents the $C_i^q$ property for $k(x_1,\dots , x_q)^{\mathrm{perf}}$.

    \noindent Moreover, thanks to Lemma~\ref{lemma regular p-degree 1} we can obtain the following result for $q=1$.

    \begin{thm}\label{thm transfer applications p-degree 1}
        Let $i,q$ be a pair of non-negative integers and $k$ a perfect field such that $k(x)$ satisfies $C_i^1$. Then, for every non-negative integer $j$, the field $k(x_1,\dots, x_{j+1})$ satisfies the $C_{i+j}^{1}$~property.
    \end{thm}
    \noindent The argument is similar to that of Theorem~\ref{thm transfer applications}, but we include it for the convenience of the~reader.
    \begin{proof}
        Let $K/k(x_1,\dots , x_{j+1})$ be a finite extension and $f\in K^{\times}$. Let $\tilde{k}$ be the algebraic closure of $k(f)$ in $K$ when $f$ is not algebraic over $k$ and the algebraic closure of $k(x_1)$ when $f$ is algebraic over $k$. In either case, the field $\tilde{k}$ satisfies the $C_i(f)$ property by hypothesis. Moreover, the extension $K/\tilde{k}$ is an extension of transcendence degree $j$ such that $\tilde{k}$ is algebraically closed in $K$. Then, Lemma~\ref{lemma regular p-degree 1} ensures that $K/\tilde{k}$ is a regular and Theorem~\ref{thm constant transfer} allow us to conclude that $K$ satisfies $C_{i+j}(f)$. This is enough to conclude.
    \end{proof}
    \begin{rmk}
        If we could replace the ``regularity'' hypothesis in Theorem~\ref{thm constant transfer} by ``$k$ is algebraically closed in $K$'', the argument above would yield the following result: let $k$ be a field of characteristic exponent $p$. Assume that $[k:k^p] \leq p^{i-q+1}$. If the field $k(x_1,\hdots, x_q)$ satisfies $C_i^q$, then for all non-negative integers $j$ and $q ' \leq q $, the field $k(x_1,\dots, x_{q'+j})$ satisfies $C_{i+j}^{q'}$.
    \end{rmk}

    \noindent The main corollary of Theorem~\ref{thm transfer applications p-degree 1} is for function fields over finite fields. By combining Proposition~\ref{prop global fields C11} with the theorem, we deduce the following result:
    \begin{cor}\label{cor Cn1 function finite}
        The field $\mathbf{F}_p(x_1,\dots , x_n)$ satisfies $C_n^1$.
    \end{cor}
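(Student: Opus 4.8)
The plan is to deduce the statement directly from the two principal results already established upstream: Proposition~\ref{prop global fields C11}, which furnishes the base case that $\mathbf{F}_p(x)$ satisfies $C_1^1$, together with the transfer machinery of Theorem~\ref{thm transfer applications}. Conceptually, a single transcendental variable already carries the full arithmetic weight of the $C_1^1$ property over $\mathbf{F}_p$, and Theorem~\ref{thm transfer applications} lets each further variable raise the Lang level $i$ by exactly one while keeping the Milnor index $q=1$ fixed.

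First I would set $k = \mathbf{F}_p$ and record that $\mathbf{F}_p$ is perfect, so $[k:k^p] = 1 = p^0$. I would then invoke Proposition~\ref{prop global fields C11} to get that $k(x_1) = \mathbf{F}_p(x_1)$ satisfies $C_1^1$; this serves as the hypothesis ``$k(x_1,\hdots,x_q)$ satisfies $C_i^q$'' of Theorem~\ref{thm transfer applications} in the case $q = 1$, $i = 1$. Next I would verify the positive-characteristic constraint of that theorem: with $i = 1$ and $q = 1$ the required bound reads $[k:k^p] \leq p^{i-q+1} = p$, and since $[\mathbf{F}_p : \mathbf{F}_p^p] = 1 \leq p$, the hypothesis is satisfied. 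Applying Theorem~\ref{thm transfer applications} with $j = n-1$ then yields that $k(x_1,\hdots,x_{q+j}) = \mathbf{F}_p(x_1,\hdots,x_n)$ satisfies $C_{i+j}^q = C_n^1$, which is exactly the assertion (the case $n=1$ being the base case itself, corresponding to $j=0$).

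Since both inputs are already in hand, I do not anticipate a genuine obstacle in this final step: the only points requiring care are the index bookkeeping ($q=1$, $i=1$, $j=n-1$) and the observation that the perfectness of $\mathbf{F}_p$ is more than enough to meet the $p$-degree bound $[k:k^p] \leq p^{i-q+1}$. The real substance of the argument lives entirely upstream, namely in the proof of the base case $C_1^1$ for $\mathbf{F}_p(x)$ (Proposition~\ref{prop global fields C11}, built on the local field input of Appendix~\ref{Appendix Local fields}) and in the inductive transfer of Theorem~\ref{thm transfer applications}, itself resting on the constant-class transfer of Theorem~\ref{thm constant transfer}.
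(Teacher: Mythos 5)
Your proposal is correct and is exactly the paper's argument: the paper deduces the corollary by combining Proposition~\ref{prop global fields C11} (the base case $C_1^1$ for $\mathbf{F}_p(x)$) with Theorem~\ref{thm transfer applications}, which is the same application with $q=1$, $i=1$, $j=n-1$ that you describe. Your verification of the $p$-degree hypothesis $[\mathbf{F}_p:\mathbf{F}_p^p]=1\leq p^{i-q+1}$ is the only bookkeeping needed and is carried out correctly.
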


    \noindent There are several cases where the hypothesis of Theorem~\ref{thm transfer applications} is not known to hold, but Theorem~\ref{thm constant transfer}, Theorem~\ref{thm non-contant transfer}, Corollary~\ref{cor constant transfer Laurent}, and Proposition~\ref{prop non-constant transfer Laurent} still give partial results towards the expected Kato-Kuzumaki properties. The remainder of this section is devoted to explain some of these applications.

    \subsection{Finite fields}
    \noindent We cannot apply Theorem~\ref{thm transfer applications} to deduce the $C_n^2$ property away from $p$ for the field $\mathbf{F}_p(x_1,\dots, x_{n+1})$ because the $C_1^2$ property for $\mathbf{F}_p(x,y)^{\mathrm{perf}}$ is still unknown. However, we also can apply Theorem~\ref{thm constant transfer} to deduce the following result because $\mathbf{F}_p(x)$ satisfies $C_0^2$.
    \begin{prop}\label{prop partial Cn-1 2 function over finite}
        Let $K/ \mathbf{F}_q(x_1,\dots, x_n)$ be a finite extension. Let $f$ be in $K^{\times}$ and denote by $\tilde{k}$ the algebraic closure of $\mathbf{F}_q(f)$ in $K$. Then, if the extension $K/\tilde{k}$ is regular, the field $K$ satisfies $C_{n-1}(\alpha)$ for every $\alpha \in \mathrm{im}\left(\KK_2(\tilde{k}) \to \KK_2(K)\right)$.
    \end{prop}

    \noindent It should be noted that the group $\KK_2(K)$ is not generated by the images of $\KK_2(\tilde{k})$ when $f$ ranges over $K^{\times}$. Even when $K= \mathbf{F}_p(x,y)$ this is not the case. Indeed, \cite[Theorem~7]{CadoretPirutka2021Reconstructing} ensures that the groups $\KK_2(\tilde{k})$ is a torsion group for $\tilde{k}$ as in Proposition~\ref{prop partial Cn-1 2 function over finite} while the class $\{x,y\} \in \KK_2(K)$ cannot be torsion again by \cite[Theorem~7]{CadoretPirutka2021Reconstructing}.

    \subsection{\texorpdfstring{$p$}{p}-adic fields}
        \noindent When the base field is a $p$-adic field we may combine the main results in \cite{DiegoLuco2024KKp-adicFunction} with our results to obtain the following:
        \begin{prop}\label{prop p-adic function field C2n+1}
            Let $k$ be a $p$-adic field and $X$ a geometrically integral variety over $k$ of dimension $n$. Denote by $K$ the function field of $X$. Fix an element $f\in K^{\times}$ and denote by $\tilde{k}$ the algebraic closure of $k(f)$ in $K$, then $K$ satisfies $C_{n+1}(\alpha)$ for every $\alpha \in \mathrm{im}\left( \KK_2(\tilde{k})\to \KK_2(K)\right)$.
        \end{prop}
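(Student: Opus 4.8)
The plan is to descend the class $\alpha$ to the constant field $\tilde k$, apply the constant–class transfer theorem (Theorem~\ref{thm constant transfer}) to climb back up to $K$, and then read off the index. Since $k$ is a $p$-adic field it has characteristic zero, so no $p$-degree hypotheses intervene; moreover $\operatorname{trdeg}_k K = \dim X = n$. Write $\alpha = \alpha_0|_K$ for some $\alpha_0 \in \KK_2(\tilde k)$. As $\tilde k$ is by construction algebraically closed in $K$, Theorem~\ref{thm constant transfer} applies and shifts the index by the transcendence degree, exactly as in the proof of Theorem~\ref{thm transfer applications}. Hence it suffices to find an integer $i$ with $\tilde k$ satisfying $C_i(\alpha_0)$ and $i + \operatorname{trdeg}_{\tilde k}K \le n+1$, since the transfer then yields $C_{i+\operatorname{trdeg}_{\tilde k}K}(\alpha_0|_K)$ and $C_j(\beta)$ implies $C_{j'}(\beta)$ whenever $j'\ge j$ (the constraint $d^{j'}\le n$ being more restrictive). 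I would now split into two cases according to whether $f$ is algebraic or transcendental over $k$.

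If $f$ is algebraic over $k$, then geometric integrality of $X$ forces $k$ to be algebraically closed in $K$, so $f \in k$ and therefore $\tilde k = k$, giving $\operatorname{trdeg}_{\tilde k}K = n$. As a $p$-adic field has cohomological dimension $2$, the norm–residue isomorphism shows $k$ satisfies $C_0^2$; in particular $C_0(\alpha_0)$ holds. Theorem~\ref{thm constant transfer} then produces $C_n(\alpha|_K)$, and a fortiori $C_{n+1}(\alpha|_K)$.

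If instead $f$ is transcendental over $k$, then $\tilde k$ is a finitely generated extension of $k$ of transcendence degree $1$ in which $k$ is algebraically closed; that is, $\tilde k$ is the function field of a geometrically integral curve over the $p$-adic field $k$, and $\operatorname{trdeg}_{\tilde k}K = n-1$. Here I would invoke the main results of \cite{DiegoLuco2024KKp-adicFunction} to conclude that such a field satisfies $C_2^2$, hence $C_2(\alpha_0)$. Theorem~\ref{thm constant transfer} then gives $C_{2+(n-1)}(\alpha|_K) = C_{n+1}(\alpha|_K)$, as desired, and combining the two cases proves the proposition.

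The transcendence-degree bookkeeping and the reduction to $\tilde k$ are routine; the essential input, on which the whole statement rests, is the Kato–Kuzumaki type property $C_2^2$ for function fields of curves over $p$-adic fields supplied by \cite{DiegoLuco2024KKp-adicFunction}. This is also the reason the conclusion is $C_{n+1}$ rather than $C_n$: only the over-determined property $C_2^2$ of $\tilde k$ is available, whereas the diagonal property $C_1^2$, which would improve the index by one, lies outside the range currently known for these fields.
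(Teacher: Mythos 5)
Your proof is correct and follows essentially the same route as the paper: the key input is the $C_2^2$ property of function fields of curves over $p$-adic fields from \cite{DiegoLuco2024KKp-adicFunction}, combined with Theorem~\ref{thm constant transfer} to shift the index by $\operatorname{trdeg}_{\tilde k}K$. Your explicit case split for $f$ algebraic over $k$ (where $\tilde k=k$ and one falls back on $C_0^2$) is a point the paper's two-line proof glosses over, but it is handled correctly and does not change the argument.
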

        \begin{proof}
            The field $\tilde{k}$ satisfies $C_2^2$ thanks to \cite[Main Theorem 1]{DiegoLuco2024KKp-adicFunction}. We can conclude directly from Theorem~\ref{thm constant transfer}.
        \end{proof}
        \noindent This gives a partial result towards the $C_3^2$ property for $K:= \mathbf{Q}_p(x,y)$, but some elements of $\KK_2(K)$ are missing. Most notably, we do not know whether or not $K$ satisfies $C_3(\{x,y\})$. \par
        We can also use \cite[Main Theorem 2]{DiegoLuco2024KKp-adicFunction} to get a statement in the same spirit as Proposition \ref{prop p-adic function field C2n+1}. Before giving the exact result, we state a weaker result to illustrate its use.
        \begin{prop}
            Let $k$ be a $p$-adic field and $K:= k(x_1,\hdots , x_n)$. Let $f_1,\hdots, f_n \in K^{\times}$ be such that $K = k(f_1,\hdots , f_n)$. Then, $K$ satisfies $C_1(\alpha)$ for every $\alpha \in \mathrm{im}\left(\KK_2(k(f_1)) \to \KK_2(K)\right)$.
        \end{prop}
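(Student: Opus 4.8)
The plan is to run the template of the one-line proof of Proposition~\ref{prop p-adic function field C2n+1}, feeding Theorem~\ref{thm constant transfer} with the external input \cite[Main Theorem 2]{DiegoLuco2024KKp-adicFunction} in place of Main Theorem~1. First I would record the free simplifications. Since $K=k(f_1,\dots,f_n)$ has transcendence degree $n$ over $k$ and is generated over $k$ by the $n$ elements $f_1,\dots,f_n$, these are algebraically independent; hence $K$ is purely transcendental over any of the intermediate fields $k(f_1,\dots,f_m)$, each of which is therefore algebraically closed in $K$, and $\alpha$ is a \emph{constant} class relative to every such field that contains $f_1$. Moreover $k$ is $p$-adic, so $\operatorname{char}k=0$ and every hypothesis of the shape $[{-}:{-}^p]\le p^{?}$ occurring in Theorem~\ref{thm constant transfer} and Theorem~\ref{thm: transferability under finite alg extension} is vacuous.

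Because Theorem~\ref{thm constant transfer} raises the index by the transcendence degree of the top field over the base, to land on index $1$ I would keep that transcendence degree equal to $1$. Concretely, let $k_0$ be the algebraic closure of $k(f_1,\dots,f_{n-1})$ in $K$ (and $k_0:=k$ when $n=1$): then $k_0$ is algebraically closed in $K$, the extension $K/k_0$ has transcendence degree $1$, and since $f_1\in k_0$ the class $\alpha$ lies in the image of $\KK_2(k_0)\to\KK_2(K)$, i.e. it is constant over $k_0$. The field $k_0$ is a function field over the $p$-adic field $k$, so I would invoke \cite[Main Theorem 2]{DiegoLuco2024KKp-adicFunction} to obtain for it a $C_0(\alpha)$-type statement for this constant class (using Theorem~\ref{thm: transferability under finite alg extension}, whose characteristic hypothesis is vacuous here, to pass from the explicit curve function field handled by Main Theorem~2 to its finite extension inside $k_0$ if necessary). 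For $n=1$ this step is just the base case $K=k(f_1)$ of Main Theorem~2.

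With $k_0$ carrying $C_0(\alpha|_{k_0})$, I would then apply Theorem~\ref{thm constant transfer} to the single remaining transcendental generator, i.e. to the extension $K/k_0$ of transcendence degree $1$ with $k_0$ algebraically closed in $K$ and $\alpha$ constant. The theorem transfers $C_0(\alpha)$ on $k_0$ to $C_{0+1}(\alpha|_K)=C_1(\alpha|_K)$, which is the assertion.

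The main obstacle is precisely the strength and placement of the external input. The index arithmetic forces the base field $k_0$ to have transcendence degree $n-1$ over $k$ and to satisfy a $C_0(\alpha)$ statement for the curve-constant class $\alpha$; the delicate point is to confirm that Main Theorem~2 really delivers such a $C_0(\alpha)$ property at that transcendence degree (as opposed to a weaker, higher-index statement), and that it applies to the finite extension $k_0$ of the curve function field $k(f_1)$ via Theorem~\ref{thm: transferability under finite alg extension}. By contrast, the algebraic independence of the $f_i$, the purely transcendental structure of $K$ over $k_0$, the descent of $\alpha$ to $\KK_2(k_0)$, and the vanishing of all characteristic-$p$ hypotheses are routine and cause no difficulty.
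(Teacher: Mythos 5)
Your instinct that the index bookkeeping does not close is correct, but your repair opens a genuine gap rather than fixing one. The input you feed into Theorem~\ref{thm constant transfer} is a $C_0(\alpha|_{k_0})$ statement on $k_0=k(f_1,\dots,f_{n-1})$, and no such statement is available. \cite[Main Theorem 2]{DiegoLuco2024KKp-adicFunction} supplies $C_1^2(k(x))$ for the \emph{one-variable} rational function field over a $p$-adic field --- one index higher than what you need --- and the $C_0$ statement you require is in fact false. Already for $n=2$ one has $k_0=k(f_1)\cong\QQ_p(x)$ and the classes $\alpha$ range over all of $\KK_2(k(f_1))$; since in characteristic zero every finite extension $F/k(f_1)$ is realised as a closed point of $\P^1_{k(f_1)}$, i.e.\ as a hypersurface of degree $[F:k(f_1)]$ in $\P^1$ (and $d^0=1\leq 1$ imposes no constraint), the property $C_0(\alpha|_{k_0})$ for all such $\alpha$ would force $\mathrm{N}_{F/k(f_1)}\colon\KK_2(F)\to\KK_2(k(f_1))$ to be surjective for every finite extension $F$, which by the norm-residue isomorphism amounts to $\operatorname{cd}(k(f_1))\leq 2$, whereas $\operatorname{cd}(\QQ_p(x))=3$. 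For $n\geq 3$ matters are worse still: $k_0$ is then an $(n-1)$-variable function field, hence not a finite extension of $k(f_1)$, so Theorem~\ref{thm: transferability under finite alg extension} cannot carry Main Theorem~2 up to $k_0$ at all.

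The argument the paper intends is the same template instantiated one step lower: take the base to be $k(f_1)$ itself (which is algebraically closed in $K$ because $f_1,\dots,f_n$ are algebraically independent, as you correctly observe), feed in $C_1(\alpha)$ for $\alpha\in\KK_2(k(f_1))$ from \cite[Main Theorem 2]{DiegoLuco2024KKp-adicFunction}, and apply Theorem~\ref{thm constant transfer} across the purely transcendental extension $K/k(f_1)$ of transcendence degree $n-1$. As the proof of Theorem~\ref{thm transfer applications} makes explicit, that theorem raises the index by the full transcendence degree, so the output is $C_{1+(n-1)}(\alpha|_K)=C_n(\alpha|_K)$ --- in agreement with the ``exact result'' stated immediately afterwards, which concludes $C_n$ for function fields of $n$-dimensional varieties. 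The subscript $1$ in the proposition as printed is not reachable by the methods of this paper for $n\geq 2$ (the printed statement and the method agree only when $n=1$, where your argument and the paper's coincide in simply quoting Main Theorem~2); the delicate point you flagged is exactly where your proof breaks.
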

        \noindent The previous proposition applies for $f_1=x_1$, but also for 
        \[f_1 = \frac{ax_i+b}{cx_i +d} + \beta,\]
        where $i \in \{ 1,\hdots , n \}$ is an index, $a,b,c,d \in \mathbf{Q}_p$ are elements satisfying $ad - bc \neq 0$, and $\beta~\in~\mathbf{Q}_p(x_1,\hdots , x_{i-1},x_{i+1},\hdots , x_n)^{\times}$.\par
        More generally, let $k$ be a $p$-adic field and $X$ a geometrically integral variety over $k$. Denote by $K$ the function field of $X$. Fix an element $f\in K^{\times}$ and denote by $\tilde{k}$ the algebraic closure of $k(f)$ in $K$. The field $\tilde{k}$ is the function field of a smooth projective geometrically integral curve $C$ defined over an extension $k_0$ of $k$. We can repeat the argument in the proof of Proposition \ref{prop p-adic function field C2n+1} by replacing \cite[Main Theorem 1]{DiegoLuco2024KKp-adicFunction} by \cite[Corollary 4.9]{DiegoLuco2024KKp-adicFunction} to obtain the following:
        \begin{prop}
            Let $k$ be a $p$-adic field and $X$ a geometrically integral variety over $k$ of dimension $n$. Denote by $K$ the function field of $X$. Fix an element $f\in K^{\times}$ and denote by $\tilde{k}$ the algebraic closure of $k(f)$ in $K$. Let $k_0/k$ be a finite extension and $C/k_0$ a geometrically integral, smooth, projective curve over $k_0$ such that $\tilde{k} = k_0(C)$. Assume that $C$ admits a $0$-cycle of degree $1$ over the maximal unramified extension of $k_0$. Then $K$ satisfies $C_{n}(\alpha)$ for every $\alpha \in \mathrm{im}\left( \KK_2(\tilde{k})\to \KK_2(K)\right)$.
        \end{prop}
        \noindent A particular case where this proposition does not apply is $f=1+px^3 + p^2y^3$ in $K=\mathbf{Q}_p(x,y)$ because the curve defined by $z^3 + px^3 + p^2y^3$ in $\P^2_{\mathbf{Q}_p}$ does not admit a zero-cycle of degree $1$ over $\mathbf{Q}_p^{nr}$.\par
        We can also apply Theorem~\ref{thm non-contant transfer} and Proposition~\ref{prop non-constant transfer Laurent} in this context to deduce the following result.
        \begin{prop}
            Let $k$ be a $p$-adic field and $X$ a geometrically integral variety over $k$ of dimension $n$. Denote by $K$ the function field of $K$. Then $K$ satisfied $C_{n+1}(\alpha)$ for every $\alpha \in k^{\times}$ and $k\lau{t_1}\dots\lau{t_n}$ satisfies $C^{n+1}_{n+1}$.
        \end{prop}

    \subsection{Totally imaginary number fields}
    \noindent In the case of function fields over a totally imaginary number field $k$ we cannot apply Theorem~\ref{thm transfer applications} because nothing is known in this case regarding the $C_i^q$ properties for $k(x)$ when $i\neq 0$. We still get some partial result using \cite[Théorème~6.1]{Wittenberg2015KKQp} and Theorem~\ref{thm constant transfer}.
    \begin{prop}
        Let $k$ be a totally imaginary number field and $X$ a geometrically integral variety over $k$ of dimension $n$. Denote by $K$ the function field of $X$. Then $K$ satisfies $C_{n+1}(\alpha)$ for every $\alpha \in k^{\times}$. 
    \end{prop}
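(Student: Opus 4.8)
The plan is to recognise this as an immediate instance of the constant-class transfer result, Theorem~\ref{thm constant transfer}, the only work being to verify its hypotheses. The crucial known input is that totally imaginary number fields already satisfy the $C_1^1$ property.

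First I would assemble the two facts needed. Since $k$ is a totally imaginary number field, it satisfies $C_1^1$ by \cite{Wittenberg2015KKQp}; in particular it satisfies $C_1^1(k)$, so for every $\alpha \in k^{\times} = \mathrm{K}_1(k)$ the field $k$ satisfies $C_1(\alpha)$. Second, because $X$ is geometrically integral of dimension $n$, the base field $k$ is algebraically closed in its function field $K = k(X)$, and the transcendence degree of $K/k$ equals $n$.

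With these in place, I would apply Theorem~\ref{thm constant transfer} with $i = 1$, $q = 1$, and transcendence degree $j = n$; as $k$ has characteristic zero, no hypothesis on $[k:k^p]$ is needed. The transfer raises the index from $1$ to $1+n$ across the transcendence degree $n$ extension (just as in the proof of Theorem~\ref{thm transfer applications}), so $K$ satisfies $C_{1+n}(\alpha|_K) = C_{n+1}(\alpha)$ for every $\alpha \in k^{\times}$, which is precisely the assertion.

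I expect essentially no obstacle here: the substantive content is entirely contained in Theorem~\ref{thm constant transfer}, and the proposition is merely a packaging of it for the totally imaginary number field case. The only points requiring a sentence are that geometric integrality of $X$ guarantees $k$ is algebraically closed in $K$ (so that the transfer theorem applies verbatim) and that the $C_1^1$ property of $k$ supplies exactly the $C_1(\alpha)$ input for each constant class $\alpha \in \mathrm{K}_1(k)$.
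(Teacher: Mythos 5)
Your proposal is correct and is exactly the argument the paper intends: the proposition is stated immediately after the remark that it follows ``using Theorem~\ref{thm constant transfer}'', with the $C_1^1$ property of totally imaginary number fields from \cite{Wittenberg2015KKQp} supplying the $C_1(\alpha)$ hypothesis and geometric integrality guaranteeing that $k$ is algebraically closed in $K$. Your reading of the transfer as raising the index by the transcendence degree $j=n$ (i.e.\ the conclusion $C_{i+j}$, as used in the proof of Theorem~\ref{thm transfer applications}) is the intended one.
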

    \noindent We can also consider Laurent series over $k$. In this context, Proposition \ref{prop non-constant transfer Laurent} gives the following result:
    \begin{prop}
        Let $k$ be a totally imaginary number field. Then $k(\!(t_1)\!) \hdots (\!(t_n)\!)$ satisfies $C_{n+1}^{n+1}$.
    \end{prop}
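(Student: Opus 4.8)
The plan is to reduce the statement to a single external input — that a totally imaginary number field satisfies $C_1^1$ — and then iterate the Laurent series transfer of Proposition~\ref{prop non-constant transfer Laurent}. First I would record the base case: by \cite{Wittenberg2015KKQp} the totally imaginary number field $k$ satisfies the $C_1^1$ property. This is exactly the $n=0$ instance of the claim, reading the empty iterated Laurent series field as $k$ itself and noting $C_{n+1}^{n+1} = C_1^1$.

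The induction is then immediate. Write $F_0 := k$ and $F_{m+1} := F_m(\!(t_{m+1})\!)$, so that $F_n = k(\!(t_1)\!)\cdots(\!(t_n)\!)$. Suppose $F_m$ satisfies $C_{m+1}^{m+1}$. Since $F_m$ has characteristic zero — being an iterated Laurent series field over a number field — Proposition~\ref{prop non-constant transfer Laurent} applies with $i = q = m+1$ and shows that $F_{m+1} = F_m(\!(t_{m+1})\!)$ satisfies $C_{(m+1)+1}^{(m+1)+1} = C_{m+2}^{m+2}$. By induction on $m$ we obtain that $F_n$ satisfies $C_{n+1}^{n+1}$, which is the assertion.

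There is essentially no obstacle here beyond assembling the pieces. The characteristic-zero hypothesis of Proposition~\ref{prop non-constant transfer Laurent} holds at every stage of the tower, so none of the $[k:k^p]$ conditions appearing elsewhere in the paper ever intervene; and the two indices on the $C_i^q$ symbol each increase by exactly one at every step, so they remain equal and the diagonal $C_{m+1}^{m+1}$ is preserved. The only genuine input is Wittenberg's theorem supplying the base case $C_1^1$; once that is in hand the result is a purely formal consequence of the transfer proposition. (One could run the same induction starting from any known $C_i^q$ property of $k$, but it is precisely the starting point $C_1^1$ that produces the diagonal conclusion $C_{n+1}^{n+1}$.)
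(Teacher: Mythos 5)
Your proposal is correct and matches the paper's intended argument exactly: the paper derives this proposition by iterating Proposition~\ref{prop non-constant transfer Laurent} starting from Wittenberg's $C_1^1$ result for totally imaginary number fields, which is precisely your induction. The only difference is that you spell out the induction explicitly, whereas the paper leaves it implicit.
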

    \noindent Given Kato-Kuzumaki's conjecture, this is far from the expected property. However, the only previously known Kato-Kuzumaki's property for such a field was $C_0^{n+2}$ (the property given by the norm-residue isomorphism).

    \appendix
    \section{Some cases of Kato-Kuzumaki in positive characteristic}
    \noindent In this appendix we prove Kato-Kuzumaki's conjecture for local and global fields in positive characteristic. We will also prove Kato-Kuzumaki's conjecture for separably closed fields.
    \subsection{Local fields}\label{Appendix Local fields}
    \noindent It is known that $\mathbf{Q}_p$ satisfies $C_1^1$ and $\mathbf{F}_p\lau{t}$ satisfies $C_1^1$ \textit{away from} $p$, see \cite[Corollaire~5.5 and Corollaire~4.7]{Wittenberg2015KKQp}. By applying the results in \cite{Kostas2024GeometricallyC1Fields} we can prove that $\mathbf{F}_p(\!(t)\!)$ actually satisfies $C_1^1$ and give a new proof in the case of $\mathbf{Q}_p$.
    \begin{prop}\label{prop local fields B11}
        The fields $\mathbf{F}_p\lau{t}$ and $\mathbf{Q}_p$ satisfy $C_1^1$.
    \end{prop}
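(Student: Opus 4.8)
The plan is to treat $K$ (either $\Qp$ or $\FF_p\lau{t}$) uniformly as a complete discretely valued field with ring of integers $R$, maximal ideal $\mathfrak{m}$, a uniformiser $\pi$, and finite residue field $k$. Since $\mathrm{K}_1(K)=K^\times$ and, up to the choice of $\pi$, the group $K^\times$ is the product of $\pi^{\ZZ}$, a lift of $k^\times$ to $R^\times$, and the principal units $\mathrm{U}_1^1(K)=1+\mathfrak{m}$, I would fix a hypersurface $Z\subseteq\P^n_K$ of degree $d\leq n$ and prove $\mathrm{N}_1(Z/K)=K^\times$ by establishing three facts: (i) $\mathrm{U}_1^1(K)\subseteq\mathrm{N}_1(Z/K)$; (ii) the image of $\mathrm{N}_1(Z/K)$ under the specialisation map $s\colon\mathrm{U}_1(K)\to k^\times$ is all of $k^\times$; and (iii) $\mathrm{N}_1(Z/K)$ contains an element of valuation $1$. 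Together these show that $\mathrm{N}_1(Z/K)$ meets every coset of $K^\times$.

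The crux is (i), and this is exactly where the hypothesis that $K$ is geometrically $C_1$, via \cite{Kostas2024GeometricallyC1Fields}, is used: it guarantees that $Z$, having degree $d\leq n$, acquires a point over the maximal unramified extension $K^{\mathrm{nr}}$ (a $C_1$ field by Lang's theorem). As $K^{\mathrm{nr}}/K$ is separable this gives $Z(K^{s})\neq\varnothing$, and then the analogue of Lemma~\ref{lem: Z has K^sep point implies U^1(K) norm of Z} — whose proof uses only completeness and surjectivity of the trace for a separable extension, and so applies verbatim to the mixed-characteristic field $\Qp$ as well — yields $\mathrm{U}_1^1(K)\subseteq\mathrm{N}_1(Z/K)$. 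I expect this to be the main obstacle and the genuinely new input: $\mathrm{U}_1^1(K)$ is $\ell$-divisible only for $\ell\neq p$, so it is precisely the pro-$p$ part $1+\mathfrak{m}$ that the earlier ``away from $p$'' arguments could not reach, and controlling it is what upgrades the known statement for $\FF_p\lau{t}$ to the full $C_1^1$ property.

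For (ii) and (iii), which carry no $p$-primary obstruction, I would reduce to the residue field. Using the commutative diagrams relating the residue and specialisation maps to the norm maps (recalled in Section~\ref{sec: Preliminaries and notation}), the images of $\mathrm{N}_1(Z/K)$ under the valuation $v$ and under $s$ are governed by norm groups of a reduction of $Z$ over $k$; since $k$ is finite it is $C_1$ and has cohomological dimension $1$, hence satisfies $C_1^1$, and Chevalley–Warning together with Hensel lifting of smooth points over unramified extensions of coprime residue degree produces finite (unramified, and tamely ramified) extensions $l/K$ with $Z(l)\neq\varnothing$ whose norms cover $k^\times$ and realise valuation $1$. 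Alternatively, since (ii) and (iii) together amount to the statement that $K^\times/\mathrm{N}_1(Z/K)$ is a $p$-group, one may simply invoke the ``away from $p$'' results of \cite[Corollaire~5.5, Corollaire~4.7]{Wittenberg2015KKQp}; combined with (i), which annihilates the remaining $p$-part (note that $k^\times$ has order prime to $p$ while $1+\mathfrak{m}$ is pro-$p$), this forces $\mathrm{N}_1(Z/K)=K^\times$ and handles $\Qp$ and $\FF_p\lau{t}$ simultaneously.
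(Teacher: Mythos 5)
Your overall strategy is reasonable, and your step (i) does work --- but only because the point of $Z$ you produce lies in a finite \emph{unramified} extension $L/K$, for which all units are norms via the trace/successive-approximation argument. Be careful here: the general principle ``$Z(K^{s})\neq\varnothing$ implies $\mathrm{U}_1^1(K)\subseteq\mathrm{N}_1(Z/K)$'' is false for local fields (for instance $\mathrm{N}_{\Qp(\zeta_{p^2})/\Qp}$ has norm group $p^{\ZZ}\cdot(1+p^2\Zp)$, which misses $1+p\Zp$), so you must not discard the unramifiedness when you pass from $K^{\mathrm{nr}}$ to $K^{s}$. The genuine gap is in step (iii), which is where the entire difficulty of the proposition is concentrated: you need an element of $\mathrm{N}_1(Z/K)$ of valuation prime to $p$, equivalently a point of $Z$ over a finite extension of residue degree prime to $p$, and neither of your two routes produces one. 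The Chevalley--Warning/Hensel route fails in exactly the hard cases: the reduction of $Z$ modulo $\mathfrak{m}$ can be everywhere non-smooth (take $Z:\sum_{i=0}^{n}a_iX_i^p=0$ with unit coefficients, whose reduction is the $p$-th power of a linear form), so there is no smooth residue point to lift over any unramified extension; and in any case norms from an unramified extension of degree $f$ have valuations in $f\ZZ$, so they yield valuation $1$ only when $Z(K)\neq\varnothing$. The ``away from $p$'' route also falls short: Wittenberg's result combined with (i) shows only that $K^{\times}/\mathrm{N}_1(Z/K)$ is a cyclic $p$-group generated by the class of a uniformiser $\pi$. That $p$-primary quotient lives in the value-group direction $\pi^{\ZZ}$, not in the units, so your parenthetical observation that $k^{\times}$ has prime-to-$p$ order while $1+\mathfrak{m}$ is pro-$p$ does not annihilate it; nothing in your argument rules out $K^{\times}/\mathrm{N}_1(Z/K)\cong\ZZ/p^{a}$ with $a>0$.

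This missing step is precisely where the paper's proof (and the reference \cite{Kostas2024GeometricallyC1Fields}) does its real work, and you have attributed that reference to the wrong fact: it is not needed for the classical statement that $K^{\mathrm{nr}}$ is $C_1$ (that is Lang's theorem and only yields \emph{unit} norms), but for the much stronger statement that a maximal \emph{totally ramified} extension of a local field is $C_1$. The paper first reduces to showing that every uniformiser $\pi$ lies in $\mathrm{N}_1(Z/k)$, then uses this $C_1$ property to find a finite totally ramified extension $k'/k$ (inside a maximal totally ramified extension containing the Lubin--Tate extension $K_{\pi}$) with $Z(k')\neq\varnothing$, and finally invokes local class field theory and Lubin--Tate theory to conclude that $\pi\in\mathrm{N}_{k'/k}(k'^{\times})$. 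Since $k'/k$ is totally ramified it has residue degree $1$, which is exactly the valuation-$1$ (more precisely, valuation-prime-to-$p$) input your proposal lacks. Without some substitute for this --- and no elementary substitute is known --- your argument does not close.
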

    \begin{proof}
        Let $k$ be a finite extension of either $\mathbf{F}_p\lau{t}$ or $\mathbf{Q}_p$ and $Z \subseteq \mathbf{P}^n_k$ a hypersurface of degree $d$ such that $d \leq n$. It is enough to prove that every uniformiser $\pi$ of $k$ belongs to $\mathrm{N}_1(Z/k)$.\par
        Let $K_{\pi}/k$ be the maximal abelian totally ramified extension associated to $\pi$ through Lubin-Tate theory and $K/k$ a maximal totally ramified extension containing $K_{\pi}$. The field $K$ is $C_1$, see \cite[Corollary 1.1.3]{Kostas2024GeometricallyC1Fields}. Then we can find a finite subextension $k'/k$ of $K/k$ such that $Z(k') \neq \emptyset$. We have $\mathrm{N}_{k'/k}(k'^{\times}) = \mathrm{N}_{k' \cap K_{\pi}/}(k' \cap K_{\pi}^{\times})$ thanks to \cite[Theorem~3.5]{milneCFT}. Then $\pi$ belongs to $\mathrm{N}_{k'/k}(k'^{\times})$ due to \cite[Corollary~5.12]{Yoshida2008CFTLubinTate}.
    \end{proof}
        \noindent In the previous proof, the hypersurface $Z$ could be replaced by a smooth projective geometrically rationally connected variety because \cite[Corollary 1.1.3]{Kostas2024GeometricallyC1Fields} also states that $K$ is \textit{geometrically} $C_1$, giving the following result.
        \begin{prop}
            Let $k$ be a classical local field, i.e. a finite extension of either $\mathbf{F}_p\lau{t}$ or $\mathbf{Q}_p$. Then, for every smooth projective geometrically rationally connected variety $X$ we have $k^{\times}=\mathrm{N}_1(X/k)$.
        \end{prop}

    \subsection{Global fields in positive characteristic} \label{appendix global function fields}
        \noindent Let us recall the definition of \textit{separable norm groups}, \cite[Definition 1.3]{Diego2018KK}.
    \begin{defi}
        Let $k$ be a field and $Z$ a variety over $k$. For every non-negative integer $q$ we define the group $\mathrm{N}^s_q(Z/k)$ as follows
        \[
            \mathrm{N}^s_q(Z/k) = \left\langle \mathrm{N}_{q}(l_s/k) \, | \, Z(l)\neq\emptyset\right\rangle,
        \]
        where $l_s/k$ is the maximal separable subextension of $l/k$.
    \end{defi}
    \noindent These groups can differ from the norm group $\mathrm{N}_q(Z/K)$, see Remark \ref{rmk hypersup with only non-simple points}. However, the following proposition ensures that there is no difference as soon as the $p$-degree is small enough.
    \begin{prop} \label{prop separable norms coincide with norms}
        Let $K$ be a field such that $[K:K^p] = p^{\delta}$. Then for any variety $Z$ over $K$ and $q\geq \delta$ we have $\mathrm{N}_q(Z/K) = \mathrm{N}_q^s(Z/K)$.
    \end{prop}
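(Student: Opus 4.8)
The plan is to compare the two groups extension-by-extension, isolating the difference between a norm from a finite extension $l/K$ and a norm from its maximal separable subextension $l_s$. One inclusion is formal: for any finite $l/K$ with $Z(l)\neq\varnothing$, transitivity of the norm gives $\mathrm{N}_{l/K}(\mathrm{K}_q(l)) = \mathrm{N}_{l_s/K}(\mathrm{N}_{l/l_s}(\mathrm{K}_q(l))) \subseteq \mathrm{N}_{l_s/K}(\mathrm{K}_q(l_s))$, whence $\mathrm{N}_q(Z/K)\subseteq\mathrm{N}_q^s(Z/K)$. For the reverse inclusion it suffices to show, for each such $l$, that $\mathrm{N}_{l_s/K}(\mathrm{K}_q(l_s))\subseteq\mathrm{N}_q(Z/K)$; since $l/l_s$ is purely inseparable and $\mathrm{N}_{l/K}=\mathrm{N}_{l_s/K}\circ\mathrm{N}_{l/l_s}$, this reduces to proving that $\mathrm{N}_{l/l_s}\colon\mathrm{K}_q(l)\to\mathrm{K}_q(l_s)$ is \emph{surjective}. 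Here $l_s/K$ is finite separable, so $[l_s:l_s^p]=[K:K^p]=p^{\delta}$, and because the imperfection degree is preserved by finite extensions, every field in the ensuing argument has $p$-rank $\delta$.

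Next I would reduce to the degree-$p$ case. Filtering $l_s=F_0\subset F_1\subset\cdots\subset F_m=l$ with each $[F_{j+1}:F_j]=p$ and using transitivity of the norm, surjectivity of each $\mathrm{N}_{F_{j+1}/F_j}$ yields surjectivity of $\mathrm{N}_{l/l_s}$; each $F_j$ still has $p$-rank $\delta$. So it remains to fix a field $F$ with $[F:F^p]=p^{\delta}$ and an extension $l=F(a^{1/p})$ with $a\in F\setminus F^p$, and to prove that $\mathrm{N}_{l/F}$ is surjective on $\mathrm{K}_q(F)$ for $q\geq\delta$. Two standard facts drive this. Since $\mathrm{N}_{l/F}\circ i_{l/F}$ is multiplication by $[l:F]=p$, we always have $p\,\mathrm{K}_q(F)\subseteq\mathrm{N}_{l/F}(\mathrm{K}_q(l))$, so it is enough to show surjectivity \emph{modulo $p$}. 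By Bloch--Kato--Gabber the differential symbol identifies $\mathrm{K}_q(F)/p$ with the group $\nu(q)_F$ of logarithmic forms inside $\Omega^q_F$, where $\dim_F\Omega^1_F=\delta$. For $q>\delta$ this forces $\Omega^q_F=0$, hence $\mathrm{K}_q(F)$ is $p$-divisible and the claim is immediate.

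The boundary case $q=\delta$ is the heart of the matter. By the projection formula $\mathrm{N}_{l/F}(\{\beta,\gamma\})=\{\beta,\mathrm{N}_{l/F}(\gamma)\}$, the image $\mathrm{N}_{l/F}(\mathrm{K}_{\bullet}(l))$ is a graded ideal of $\mathrm{K}_{\bullet}(F)$; since the degree-$1$ norm of a purely inseparable degree-$p$ extension is the $p$-th power map and $a=\mathrm{N}_{l/F}(a^{1/p})$, the element $a$ lies in the degree-$1$ part of this ideal. Consequently $\{a,y_2,\hdots,y_{\delta}\}\in\mathrm{N}_{l/F}(\mathrm{K}_{\delta}(l))$ for all $y_i\in F^{\times}$, and modulo $p$ the image of the norm contains $d\log a\wedge\nu(\delta-1)_F$. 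It then remains to establish the purely differential-form identity $d\log a\wedge\nu(\delta-1)_F=\nu(\delta)_F$ for a $p$-basis element $a$; granting this, $\mathrm{N}_{l/F}$ is surjective modulo $p$, which combined with $p\,\mathrm{K}_{\delta}(F)\subseteq\mathrm{N}_{l/F}(\mathrm{K}_{\delta}(l))$ gives surjectivity and finishes the proof.

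I expect the main obstacle to be exactly this last identity $d\log a\wedge\nu(\delta-1)_F=\nu(\delta)_F$. The point is that, although $\Omega^{\delta}_F$ is one-dimensional so every top form is trivially divisible by $da$, one must perform the division while staying inside the logarithmic subgroup $\nu$, and this is not formal. I would prove it through the structure theory of logarithmic differentials---via the Cartier operator characterisation of $\nu(\bullet)$ and its compatibility with wedging by $d\log$ of a $p$-basis element---after reducing, through Bloch--Kato--Gabber, to a statement involving only a single fixed $p$-basis containing $a$; alternatively one may hope to cite a known surjectivity result for norms along purely inseparable extensions in the spirit of Kato's work.
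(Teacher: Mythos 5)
Your reductions are sound and run parallel to the paper's up to the key step: the inclusion $\mathrm{N}_q(Z/K)\subseteq\mathrm{N}_q^s(Z/K)$ is formal, the reverse inclusion does reduce to surjectivity of $\mathrm{N}_{l/l_s}$ (note $\Spec(l)$ has a single closed point, so this is exactly the paper's Lemma \ref{lemma insep above p-degree surjective}), this reduces to degree-$p$ purely inseparable steps with imperfection degree $\delta$ preserved throughout, and the case $q>\delta$ via $p$-divisibility of $\KK_q(F)$ is correct. Where you and the paper part ways is the core: the paper observes that $\Spec\bigl(F(a^{1/p})\bigr)$ has the same splitting fields as the degree-$p$ hypersurface $X_0^p+aX_1^p=0$ in $\P^1_F$ and cites Kato--Kuzumaki's Proposition 2(1) (with an elementary direct computation for $q=\delta=1$ in Lemma \ref{lemma norm of purely inseparable extension pdeg one}), whereas you attempt to prove the boundary case $q=\delta$ from scratch.

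That attempt has a genuine gap: the identity $d\log a\wedge\nu(\delta-1)_F=\nu(\delta)_F$ on which it rests is false, not merely hard. Already for $\delta=1$ it asserts that $F^{\times}/(F^{\times})^p$ is generated by the class of $a$, i.e.\ $F^{\times}=a^{\ZZ}\cdot(F^{\times})^p$, which fails for $F=\FF_p(t)$ and $a=t$. The underlying problem is that the part of the image of $\mathrm{N}_{l/F}$ you detect --- the ideal generated by $a=\mathrm{N}_{l/F}(a^{1/p})$ together with $p\,\KK_{\delta}(F)$ --- is in general a proper subgroup: the norm of a general element of $\KK_{\delta}(l)$ is not reached by the projection formula with $a^{1/p}$ in one slot (compare Lemma \ref{lemma norm of purely inseparable extension pdeg one}, where the preimage of $x=\sum\lambda_i^pa^i$ is $y=\sum\lambda_ia^{i/p}$, not a multiple of $a^{1/p}$). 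The surjectivity you want is nevertheless true, and your own toolkit proves it if you argue in the opposite direction: by Proposition \ref{prop norm purely inseparable} the norm along a purely inseparable degree-$p$ extension is multiplication by $p$ composed with the inverse of the injective restriction $i_{l/F}$, so surjectivity in degree $\delta$ is equivalent to the vanishing of $i_{l/F}\colon\KK_{\delta}(F)/p\to\KK_{\delta}(l)/p$; by Bloch--Kato--Gabber this is the map $\nu(\delta)_F\to\nu(\delta)_l$, which vanishes because $\Omega^{\delta}_F\to\Omega^{\delta}_l$ is already the zero map ($da\mapsto d\bigl((a^{1/p})^p\bigr)=0$, and every top-degree form over $F$ is a multiple of $da\wedge db_2\wedge\cdots\wedge db_{\delta}$ for a $p$-basis $\{a,b_2,\hdots,b_{\delta}\}$ containing $a$). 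Alternatively, cite Kato--Kuzumaki's Proposition 2 as the paper does.
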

    \noindent This follows from the following lemma.
    \begin{lemma}\label{lemma insep above p-degree surjective}
         Let $K$ be a field such that $[K:K^p] = p^{\delta}$. Then for any purely inseparable finite extension $L/K$ we have $\mathrm{N}_q(L/K) = \KK_q(K)$ for every $q \geq \delta$.
    \end{lemma}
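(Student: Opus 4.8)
The plan is to reduce to a single inseparable step of degree $p$, dispose of the range $q>\delta$ by a divisibility argument, and concentrate all the work on the critical case $q=\delta$.

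\emph{Reductions.} Since $\mathrm{N}_q(L/K)=\mathrm{N}_{L/K}(\KK_q(L))$ and norms are transitive in towers, and since a finite purely inseparable extension decomposes into degree-$p$ steps $K=K_0\subseteq\cdots\subseteq K_m=L$ with $K_{i+1}=K_i(a_i^{1/p})$, it suffices to treat $L=K(a^{1/p})$ with $[L:K]=p$. Here I use that a finite purely inseparable extension leaves the $p$-degree unchanged, so $[K_i:K_i^p]=p^\delta$ at every stage and the hypothesis $q\ge\delta$ persists along the tower. For such an $L$ I record two facts. From $\mathrm{N}_{L/K}\circ i_{L/K}=[L:K]$ we get $p\,\KK_q(K)\subseteq\mathrm{N}_{L/K}(\KK_q(L))$. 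Moreover, $L/K$ being purely inseparable of degree $p$, every $y\in L^{\times}$ satisfies $\mathrm{N}_{L/K}(y)=y^{p}$ (a single embedding with inseparable degree $p$), so the projection formula shows that any symbol with one entry in $(L^{p})^{\times}=K^{p}(a)^{\times}$ is a norm: writing that entry as $c^{p}$ with $c\in L^{\times}$, one has $\{b_1,\dots,b_{q-1},c^{p}\}=\mathrm{N}_{L/K}\bigl(\{b_1,\dots,b_{q-1},c\}\bigr)$.

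\emph{The case $q>\delta$.} By the Bloch--Kato--Gabber isomorphism, $\KK_q(K)/p\cong\nu_q(K)\subseteq\Omega^q_K$, and $\Omega^q_K=0$ since $\dim_K\Omega^1_K=\delta<q$. Hence $\KK_q(K)$ is $p$-divisible, and the first recorded fact gives $\KK_q(K)=p\,\KK_q(K)\subseteq\mathrm{N}_{L/K}(\KK_q(L))$.

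\emph{The critical case $q=\delta$.} As $p\,\KK_\delta(K)$ already lies in the image, it suffices, again via Bloch--Kato--Gabber, to prove that $\nu_\delta(K)\cong\KK_\delta(K)/p$ is generated by the classes of symbols having one entry in $K^{p}(a)$. Fix a $p$-basis $c_1=a,c_2,\dots,c_\delta$, so that $\Omega^\delta_K$ is the line spanned by $\omega_0=\tfrac{dc_1}{c_1}\wedge\cdots\wedge\tfrac{dc_\delta}{c_\delta}$. Reducing a general symbol $\operatorname{dlog}$-wise to the generators $\{z,c_1,\dots,\widehat{c_j},\dots,c_\delta\}$ ($z\in K^{\times}$), those omitting $c_1$ still contain $a=c_1\in K^{p}(a)$ and are of the required type, so only $\{z,c_2,\dots,c_\delta\}$ must be handled. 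These I would treat by a Steinberg-relation argument that peels off the part of $z$ lying in $K^{p}(a)$: if $z=u+m$ with $u\in K^{p}(a)^{\times}$, the identity $\bigl\{\tfrac{u}{z},\tfrac{m}{z}\bigr\}=0$ yields $\{z,m\}=\{u,m\}+\{z,u\}$, whose right-hand terms both have an entry in $K^{p}(a)$; wedging such relations with $\{c_3,\dots,c_\delta\}$ and iterating (inducting on the $(c_2,\dots,c_\delta)$-degree of $z$ relative to $K_1:=K^{p}(a)$) rewrites $\{z,c_2,\dots,c_\delta\}$ as a sum of symbols of the required type.

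\emph{Main obstacle.} The genuine difficulty is precisely this last generation statement at the top degree $q=\delta$. The $q>\delta$ range collapses by divisibility, but for $q=\delta$ the group $\nu_\delta(K)$ is only an $\mathbf{F}_p$-subgroup of the line $\Omega^\delta_K$, not a $K$-subspace, so one cannot simply factor out $\tfrac{da}{a}$; the argument must produce, for \emph{every} $z\in K^{\times}$, an explicit finite combination of symbols meeting $K^{p}(a)$. This is exactly the point at which the hypothesis $[K:K^p]=p^\delta$ together with $q=\delta$ is used, ensuring that $a$ completes to a $p$-basis of size exactly $q$ and hence that the remaining $\delta-1$ symbol slots can be filled by $c_2,\dots,c_\delta$ while the reduction of the free slot terminates.
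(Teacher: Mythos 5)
Your reductions are all correct as far as they go: the passage to a single step $L=K(a^{1/p})$ via invariance of the $p$-degree along the tower, the inclusion $p\,\KK_q(K)\subseteq \mathrm{N}_{L/K}(\KK_q(L))$, the observation via the projection formula and $\mathrm{N}_{L/K}(y)=y^p$ that every symbol with an entry in $(L^p)^\times=K^p(a)^\times$ is a norm, and the disposal of the range $q>\delta$ by Bloch--Gabber--Kato together with $\Omega^q_K=0$. The gap is in the case $q=\delta$, which is where the entire content of the lemma sits. First, the generation of $\KK_\delta(K)/p$ by symbols $\{z,c_{i_1},\hdots,c_{i_{\delta-1}}\}$ built from a $p$-basis is not a formal consequence of the injectivity of $\operatorname{dlog}$: knowing the image of a class in the one-dimensional space $\Omega^\delta_K$ does not let you rewrite the class as a sum of symbols of a prescribed shape. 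That generation statement is a genuine theorem of Kato (a key lemma in the proof of Bloch--Gabber--Kato), so it can be cited, but ``reducing dlog-wise'' is not a proof of it. Second, and more seriously, the residual symbols $\{z,c_2,\hdots,c_\delta\}$ are not actually handled. The Steinberg relation $\{u/z,m/z\}=0$ with $z=u+m$ expands to a relation among $\{u,m\}$, $\{u,z\}$, $\{z,m\}$ and $\{z,z\}$; it controls the symbol $\{z,m\}$, in which $z$ is paired with its own summand $m=z-u$, and multiplying the relation by $\{c_3,\hdots,c_\delta\}$ only produces relations among symbols of the form $\{*,*,c_3,\hdots,c_\delta\}$ whose first two slots are filled by $z,u,m$ --- the target symbol $\{z,c_2,c_3,\hdots,c_\delta\}$ never appears in any of them. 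No induction scheme that terminates at the target is specified, and your own ``main obstacle'' paragraph concedes that this is exactly the unresolved point. As written, the critical step is a plan rather than an argument.

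For comparison, the paper's proof is entirely different and sidesteps all of this: it notes that $L=K(a^{1/p})$ is split exactly by the degree-$p$ hypersurface $X_0^p+aX_1^p=0$ in $\P^1_K$ and invokes Proposition 2 (1) of Kato--Kuzumaki, which under the hypothesis $[K:K^p]\leq p^q$ asserts that the norm group of such a hypersurface is all of $\KK_q(K)$; the general purely inseparable extension is then treated by the same tower decomposition you use. The symbol-theoretic content you are trying to reconstruct by hand is precisely what that citation supplies, so to complete your route you must either prove the relative generation statement for $\{z,c_2,\hdots,c_\delta\}$ in full or fall back on the Kato--Kuzumaki reference as the paper does.
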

    \begin{proof}
        Assume that $L=K(a^{1/p})$ for some $a \in K\setminus K^p$. Note that the hypersurface $Z$ of $\P^1_K$ defined by $X_0^p + aX_1^p$ only has point in extensions containing $L$. We can apply \cite[Proposition 2 (1)]{KatoKuzumaki1986Dimension} to $Z$ to deduce that $\mathrm{N}_q(L/K) = \KK_q(K)$.\par
        A general extension can be decomposed as a tower $K= K_0 \subseteq \hdots \subseteq K_{r-1} \subseteq K_r =L$ such that $K_i /K_{i-1}$ is of the form $K_i = K_{i-1}(a_i^{1/p})$ for some $a_i \in K_{i-1}$. We conclude by the previous paragraph and transitivity of the norm.
    \end{proof}
    \begin{proof}[Proof Proposition \ref{prop separable norms coincide with norms}]
        Let $l/k$ be a finite extension such that $Z(l)\neq \emptyset$. By Lemma~\ref{lemma insep above p-degree surjective} we have that $\mathrm{N}_{q}(l/l_s) = \KK_q(l_s)$, where $l_s/k$ is the maximal separable subextension of $l$. In particular it follows that $\mathrm{N}_q(Z/k) = \mathrm{N}_q^s(Z/k)$ by transitivity of the norm.
    \end{proof}
    \noindent For the applications to functions fields over finite fields we only need the case where $\delta = q = 1$ which admits an elementary proof that we include here for completeness.
        \begin{lemma}\label{lemma norm of purely inseparable extension pdeg one}
        Let $K$ be a field of positive characteristic $p$ such that $[K:K^p] = p$ and $L/K$ a purely inseparable extension. Then the norm map $\mathrm{N}_{L/K}:L^{\times} \to K^{\times}$ is surjective.
    \end{lemma}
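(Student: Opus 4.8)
The plan is to reduce to the case of a degree-$p$ extension and then compute the norm map explicitly. First I would record the invariance of the $p$-degree: for any finite extension $L/K$ one has $[L:L^p] = [K:K^p]$. This follows from the two degree computations $[L:K^p] = [L:K]\,[K:K^p]$ and $[L:K^p] = [L:L^p]\,[L^p:K^p]$, together with the observation that the Frobenius $x \mapsto x^p$ is a field isomorphism, so that $[L^p:K^p] = [L:K]$; comparing the two expressions yields the claim. Consequently the hypothesis $[K:K^p]=p$ is inherited by every intermediate field of the tower I am about to build.

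Next I would decompose the finite purely inseparable extension $L/K$ into a tower $K = K_0 \subseteq K_1 \subseteq \cdots \subseteq K_r = L$ with each step $K_i = K_{i-1}(a_i^{1/p})$ of degree $p$, obtained by successively adjoining $p$-th roots. By transitivity of the norm, $\mathrm{N}_{L/K}$ is the composite of the maps $\mathrm{N}_{K_i/K_{i-1}}$, so it suffices to treat a single degree-$p$ step. By the first paragraph each $K_{i-1}$ still satisfies $[K_{i-1}:K_{i-1}^p]=p$, so we are reduced to proving the statement for $L = K(b)$ with $b^p = a \in K \setminus K^p$ and $[K:K^p]=p$.

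In this degree-$p$ situation I would show directly that $\mathrm{N}_{L/K}(\beta) = \beta^p$ for every $\beta \in L$. Indeed $L^p = K^p(a) \subseteq K$, so $\beta^p \in K$; if $\beta \notin K$ then $K(\beta)=L$, and the minimal polynomial of $\beta$ over $K$ divides $X^p - \beta^p = (X-\beta)^p$ and has degree $[L:K]=p$, hence equals $X^p-\beta^p$ and is also the characteristic polynomial of multiplication by $\beta$; reading off the constant term (and noting the sign is harmless in characteristic $p$) gives $\mathrm{N}_{L/K}(\beta)=\beta^p$, while for $\beta \in K$ this is immediate. Therefore the image of the norm is exactly $(L^\times)^p = (L^p)^\times$.

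It then remains to identify $L^p$ with $K$, which is the real content of the statement. Since $[K:K^p]=p$ is prime and $a \in K \setminus K^p$, the chain $K^p \subsetneq K^p(a) \subseteq K$ forces $K^p(a)=K$; as $L^p = K^p(b^p) = K^p(a)$, we conclude $L^p = K$ and hence $\mathrm{N}_{L/K}(L^\times) = (L^p)^\times = K^\times$, proving surjectivity. The only steps requiring any care are this final identification and the invariance of the $p$-degree that keeps the hypothesis alive along the tower; everything else is a routine norm computation.
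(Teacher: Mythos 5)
Your proof is correct and follows essentially the same route as the paper's: reduce to a single degree-$p$ step $L=K(a^{1/p})$ via a tower and transitivity of the norm, then exploit $K=K^p(a)$ together with the fact that the norm of a purely inseparable degree-$p$ extension is the $p$-th power map (the paper exhibits an explicit preimage $y=\sum_i\lambda_i a^{i/p}$ of $x=\sum_i\lambda_i^p a^i$, whereas you identify the image of the norm as $(L^p)^{\times}=(K^p(a))^{\times}=K^{\times}$, which is the same computation read in the other direction). Your explicit verification that $[K_{i-1}:K_{i-1}^p]=p$ persists along the tower supplies a detail the paper leaves implicit.
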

    \begin{proof}
        Assume that $L = K(a^{1/p})$ for some $a\in K^{\times} \setminus K^{\times p}$. Since $[K:K^p]=p$, we have that $K = K^p(a)$, and hence for every $x \in K$ there exists elements $\lambda_0, \hdots , \lambda_{p-1} \in K$ such that 
        \[x = \sum_{i =0} ^{p-1} \lambda_i^p a^{i}.\]
        Since the extension $L/K$ is a purely inseparable extension of degree $p$, fixing
        \[ y := \sum_{i =0} ^{p-1} \lambda_i a^{i/p} \in L\]
        we have $\mathrm{N}_{L/K}(y) = y^p= x$. For a general extension, we can find  a tower  of subextension $K=K_0 \subseteq \hdots \subseteq K_{r-1}\subseteq K_r = L$ such that $K_i/K_{i-1}$ is of the form $K_i = K_{i-1}(a_i^{1/p})$ for some $a_i \in K_{i-1}$. We conclude by transitivity of the norm.
    \end{proof}
    \noindent Now that we have Proposition~\ref{prop separable norms coincide with norms}, \cite[Theorem 1.4]{Diego2018KK} directly implies the following:
    \begin{thm}\label{thm Diego}
        Let $K$ be a global function field and $Z$ a variety over $K$ containing a geometrically irreducible closed subvariety. Then,
        \[
        \ker \left(K^{\times}/ \mathrm{N}_1(Z/K) \to \prod_{v \in \Omega_K}K^{\times}_v/ \mathrm{N}_1(Z \otimes_K K_v / K_v) \right) = 0.
        \]
    \end{thm}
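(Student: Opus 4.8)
The plan is to observe that \cite[Theorem 1.4]{Diego2018KK} is precisely the vanishing of the analogous kernel formed with the \emph{separable} norm groups $\mathrm{N}^s_1$ in place of the ordinary norm groups $\mathrm{N}_1$, and then to use Proposition~\ref{prop separable norms coincide with norms} to identify the two flavours of norm group in the situation at hand. Thus the whole argument reduces to a formal substitution, once the relevant $p$-degrees have been computed.

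First I would record the $p$-degree of the fields involved. A global function field $K$ is a finite extension of $\mathbf{F}_p(t)$, hence has transcendence degree one over the perfect field $\mathbf{F}_p$; therefore $[K:K^p]=p$. Likewise, every completion $K_v$ is isomorphic to a Laurent series field $\mathbf{F}_{q_v}\lau{\pi_v}$ over a finite, and in particular perfect, residue field, so $[K_v:K_v^p]=p$ as well.

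Next, since $q=1$ and the common $p$-degree exponent is $\delta=1$, so that $q\geq\delta$, Proposition~\ref{prop separable norms coincide with norms} applies and yields
\[
\mathrm{N}_1(Z/K)=\mathrm{N}^s_1(Z/K)
\qquad\text{and}\qquad
\mathrm{N}_1(Z\otimes_K K_v/K_v)=\mathrm{N}^s_1(Z\otimes_K K_v/K_v)
\]
for every place $v\in\Omega_K$. Substituting these identifications into the local--global exact sequence of \cite[Theorem 1.4]{Diego2018KK} then gives exactly the asserted vanishing of the kernel.

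The hypothesis that $Z$ contains a geometrically irreducible closed subvariety is carried over verbatim from \cite[Theorem 1.4]{Diego2018KK}, so no additional geometric input is required. Since the argument is purely a substitution, there is no genuine obstacle here beyond the elementary $p$-degree computation; the real content has already been isolated in Proposition~\ref{prop separable norms coincide with norms} (and, in this $\delta=q=1$ case, in the elementary Lemma~\ref{lemma norm of purely inseparable extension pdeg one}), which is what licenses replacing separable norms by ordinary norms both globally and at each place.
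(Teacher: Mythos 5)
Your proposal is correct and matches the paper's intended argument exactly: the paper states that Theorem 1.4 of \cite{Diego2018KK} (formulated with separable norm groups) ``directly implies'' the result once Proposition~\ref{prop separable norms coincide with norms} is available, and your write-up simply makes that implication explicit via the $p$-degree computations $[K:K^p]=[K_v:K_v^p]=p$ so that $\mathrm{N}_1=\mathrm{N}_1^s$ both globally and at every place. Nothing is missing.
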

    \noindent By following the proof of \cite[Corollary 1.9]{Diego2018KK}, we obtain the following:
    \begin{prop}\label{prop global fields C11}
        The field $\mathbf{F}_p(x)$ satisfies $C_1^1$.
    \end{prop}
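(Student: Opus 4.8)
The plan is to deduce the statement from the local--global principle of Theorem~\ref{thm Diego} combined with the local result of Proposition~\ref{prop local fields B11}, exactly as in \cite[Corollary 1.9]{Diego2018KK}. Unwinding the definition of $C_1^1$, I must show that for every finite extension $K/\FF_p(x)$ — which is again a global function field — and every hypersurface $Z\subseteq\P^n_K$ of degree $d$ with $d\leq n$, one has $\mathrm{N}_1(Z/K)=K^\times$. So fix such a $K$ and $Z$.

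First I would record the local input. Every place $v$ of $K$ has completion $K_v$ a local field of characteristic $p$ with finite residue field, i.e.\ a finite extension of $\FF_p\lau{t}$; since Proposition~\ref{prop local fields B11} is stated for all finite extensions of $\FF_p\lau{t}$, each $K_v$ satisfies $C_1^1$. Because $Z\otimes_K K_v$ is again a hypersurface of degree $d\leq n$ in $\P^n_{K_v}$, this yields $\mathrm{N}_1(Z\otimes_K K_v/K_v)=K_v^\times$ for every $v$, so that the target $\prod_{v\in\Omega_K}K_v^\times/\mathrm{N}_1(Z\otimes_K K_v/K_v)$ of the localisation map is the trivial group.

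Granting that Theorem~\ref{thm Diego} applies to $Z$, the conclusion is then immediate. The map $K^\times/\mathrm{N}_1(Z/K)\to\prod_v K_v^\times/\mathrm{N}_1(Z\otimes_K K_v/K_v)$ has trivial kernel by Theorem~\ref{thm Diego}, but its target vanishes, so the whole group $K^\times/\mathrm{N}_1(Z/K)$ coincides with this kernel and is therefore trivial; that is, $\mathrm{N}_1(Z/K)=K^\times$. The only hypothesis that needs checking is thus that $Z$ contains a geometrically irreducible closed subvariety.

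This last verification is where I expect the real difficulty to lie. Observing that $\mathrm{N}_1(Z/K)$ is generated by the subgroups $\mathrm{N}_1(Z_j/K)$ as $Z_j$ ranges over the $K$-irreducible components of $Z$ (each a hypersurface of degree at most $d\leq n$), I would reduce to the case where $Z$ is integral. If $Z$ is moreover geometrically integral it serves as its own geometrically irreducible subvariety and there is nothing more to do. The remaining case is $Z$ integral but geometrically reducible: here the key point is that for $n\geq 2$ a hypersurface in $\P^n$ is geometrically connected, so the Galois-conjugate geometric components of $Z$ must meet, and their common intersection is a Galois-stable — hence $K$-rational — closed subvariety of dimension at least $n-d\geq 0$. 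I would then extract a geometrically irreducible closed $K$-subvariety from this intersection, recursing on the number of components, with the inequality $d\leq n$ ensuring that the successive intersection loci remain non-empty, and treating the degenerate cases $n\leq 1$ by hand. Making this extraction uniform and rigorous — equivalently, proving directly that a hypersurface of degree $d\leq n$ over $K$ always contains a geometrically irreducible closed subvariety — is the main obstacle, and it is precisely the geometric ingredient that allows Theorem~\ref{thm Diego} to be brought to bear.
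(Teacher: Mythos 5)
Your overall architecture --- local triviality of the norm groups from Proposition~\ref{prop local fields B11} fed into the local--global principle of Theorem~\ref{thm Diego} --- is the same as the paper's, and the reduction to $Z$ integral is fine. The gap is exactly where you place it: the claim that a hypersurface $Z\subseteq\P^n_K$ of degree $d\leq n$ always contains a geometrically irreducible closed subvariety \emph{defined over $K$ itself}. Your sketch does not establish this. The intersection of all the geometric components of $Z$ is indeed non-empty (it has dimension at least $n-d\geq 0$, since there are at most $d$ components, each a divisor) and is Galois-stable, hence descends to a closed $K$-subvariety; but there is no reason for this intersection to be geometrically irreducible, and once you pass to it you have left the class of hypersurfaces, so the inequality $d\leq n$ gives you no further leverage. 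The recursion can terminate, for instance, at a Galois orbit of finitely many conjugate closed points whose residue fields are separable of degree $>1$ over $K$: such a closed subscheme is geometrically reducible and contains no geometrically irreducible closed $K$-subvariety at all. This stronger over-$K$ statement is precisely what is \emph{not} available, which is why Wittenberg's machinery exists in the first place.

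The paper's proof circumvents this by weakening the target: since $\chi(Z,\mathcal{O}_Z)=1$ for a hypersurface of degree $d\leq n$ in $\P^n$, \cite[Proposition~3.3]{Wittenberg2015KKQp} produces finite extensions $K_1,\hdots,K_m/K$ with $\gcd\{[K_i:K]\}=1$ such that each $Z\otimes_K K_i$ (not $Z$ itself) contains a geometrically irreducible closed subvariety. One then applies Theorem~\ref{thm Diego} and Proposition~\ref{prop local fields B11} over each $K_i$ to get $\mathrm{N}_1(Z\otimes_K K_i/K_i)=K_i^{\times}$, and recovers the statement over $K$ by restriction--corestriction: for $\alpha\in K^{\times}$ one has $\alpha^{[K_i:K]}=\mathrm{N}_{K_i/K}(\alpha|_{K_i})\in\mathrm{N}_1(Z/K)$ for every $i$, and a B\'ezout combination of the exponents $[K_i:K]$ yields $\alpha\in\mathrm{N}_1(Z/K)$. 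To repair your argument, replace the attempted direct extraction of a geometrically irreducible subvariety over $K$ by this coprime-degree device.
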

    \noindent We include a proof for completeness.
    \begin{proof}
        Let $K/\mathbf{F}_p(x)$ be a finite extension and $Z \subseteq \P^n_K$ a hypersurface of degree $d$ such that $d\leq n$. Since $\chi(Z, \mathcal{O}_Z) =1$, we can apply \cite[Proposition 3.3]{Wittenberg2015KKQp} to find finite extensions $K_1,\hdots ,K_m/K$ such that $Z \otimes_K K_i$ contains a geometrically irreducible closed subvariety and $\mathrm{gcd}\{[K_i:K] \, | \, i=1,\hdots , m\} =1$. Moreover, Proposition \ref{prop local fields B11} and Theorem \ref{thm Diego} ensures that $K_i^{\times} = \mathrm{N}_1(Z\otimes_K K_i /K_i)$. A standard restriction-corestriction argument allows us to conclude.
    \end{proof}
    \subsection{Separably closed fields}\label{Appendix KK separably closed field}
    \noindent In this section we prove Kato-Kuzumaki's conjecture for separably closed fields. Surprisingly, this proof is fairly elementary and was not considered in the literature before. We need the following well known facts.
    \begin{prop}\label{prop norm purely inseparable}
        Let $k$ be a field of characteristic $p$ and $l/k$ a finite purely inseparable extension. Then, for every non-negative integer $q$ the group $\mathrm{K}_q(k)$ is $p$-torsion free and the natural map $i_{l/k}:\mathrm{K}_q(k) \to \mathrm{K}_q(l)$ is injective. Also, the norm map $\mathrm{N}_{l/k}:~\mathrm{K}_q(l)~\to~\mathrm{K}_q(k)$ is given by $\mathrm{N}_{l/k}(\alpha) = \alpha^{[l:k]}$.
    \end{prop}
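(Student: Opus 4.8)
The plan is to deduce all three assertions from a single deep input — the theorem of Izhboldin that the Milnor $\mathrm{K}$-theory of a field of characteristic $p$ has no $p$-torsion — with everything else being formal manipulation of the norm and restriction maps recalled in the preliminaries. Throughout I would abbreviate $i := i_{l/k}$ and $\mathrm{N} := \mathrm{N}_{l/k}$, and use the property $\mathrm{N}\circ i = [l:k]\cdot\mathrm{id}$ on $\mathrm{K}_q(k)$ listed among the basic properties of the norm.

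First I would record the $p$-torsion-freeness of $\mathrm{K}_q(k)$, citing Izhboldin; this is the first assertion. The injectivity of $i$ is then immediate: since $l/k$ is purely inseparable, $[l:k] = p^n$ is a power of $p$, and $\mathrm{N}\circ i = p^n\cdot\mathrm{id}$ is injective on the $p$-torsion-free group $\mathrm{K}_q(k)$, hence so is $i$. This gives the second assertion.

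For the formula $\mathrm{N}(\alpha) = \alpha^{[l:k]}$ — which, under the injection $i$, I read as the identity $i\circ\mathrm{N} = [l:k]\cdot\mathrm{id}$ on $\mathrm{K}_q(l)$ (i.e. $[l:k]\cdot\alpha$ lies in the image of $i$ and $\mathrm{N}(\alpha)$ is its unique preimage) — I may assume $q\geq 1$, the case $q=0$ being trivial as $\mathrm{K}_0=\mathbf{Z}$ and all maps are multiplication by $[l:k]$. I would first reduce to $[l:k]=p$: a finite purely inseparable extension decomposes as a tower $k=k_0\subset\cdots\subset k_r=l$ of degree-$p$ steps, and transitivity of $i$ and $\mathrm{N}$ together with $i_{k_j/k_{j-1}}\mathrm{N}_{k_j/k_{j-1}}=[k_j:k_{j-1}]\cdot\mathrm{id}$ propagates the identity up the tower. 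So assume $l = k(\theta)$ with $\theta^p = a\in k$. The extra structure is the Frobenius $\phi\colon l\to k$, $x\mapsto x^p$, which is a well-defined field embedding because $l^p\subseteq k$ (one has $(\sum c_i\theta^i)^p=\sum c_i^p a^i\in k$). Functoriality gives $\phi_*\colon\mathrm{K}_q(l)\to\mathrm{K}_q(k)$, and since the composite $\iota\circ\phi$ (with $\iota\colon k\hookrightarrow l$ the inclusion) is the $p$-power Frobenius endomorphism of $l$, acting on symbols by $\{x_1,\dots,x_q\}\mapsto\{x_1^p,\dots,x_q^p\}=p^q\{x_1,\dots,x_q\}$, I obtain $i\circ\phi_* = p^q\cdot\mathrm{id}_{\mathrm{K}_q(l)}$.

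Finally I would combine the two relations $\mathrm{N}\circ i = p\cdot\mathrm{id}$ and $i\circ\phi_* = p^q\cdot\mathrm{id}$. Applying $\mathrm{N}$ to the second relation yields $p^q\,\mathrm{N} = \mathrm{N}\circ i\circ\phi_* = p\,\phi_*$, so $p(\phi_* - p^{q-1}\mathrm{N})=0$ and the $p$-torsion-freeness of $\mathrm{K}_q(k)$ forces $\phi_* = p^{q-1}\mathrm{N}$. Substituting this into $i\circ\phi_* = p^q\cdot\mathrm{id}$ gives $p^{q-1}(i\circ\mathrm{N} - p\cdot\mathrm{id})=0$, and now the $p$-torsion-freeness of $\mathrm{K}_q(l)$ forces $i\circ\mathrm{N} = p\cdot\mathrm{id}$, which is the claim for $[l:k]=p$. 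The only serious ingredient is Izhboldin's torsion-freeness theorem; the remainder is a short manipulation of the Frobenius and norm relations, with the tower reduction to degree $p$ the one routine piece of bookkeeping to carry out carefully.
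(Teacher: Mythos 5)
Your proof is correct, and the first two assertions (torsion-freeness via Izhboldin, which is the same result the paper cites from Geisser--Levine, and injectivity of $i_{l/k}$ from $\mathrm{N}_{l/k}\circ i_{l/k}=[l:k]\cdot\mathrm{id}$) coincide with the paper's argument. For the norm formula, however, you take a genuinely different route. The paper argues symbol by symbol and avoids any tower reduction: given a symbol $\alpha=\{a_1,\dots,a_q\}$ with entries in $l^\times$, it picks $r$ with $a_i^{p^r}\in k$, observes that $\alpha^{p^{rq}}$ then lies in the image of $i_{l/k}$, applies the projection formula $\mathrm{N}_{l/k}(\beta|_l)=\beta^{[l:k]}$ to that power, and cancels the exponent $p^{rq}$ using $p$-torsion-freeness. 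You instead reduce to $[l:k]=p$ through a tower of degree-$p$ steps and introduce the Frobenius embedding $\phi\colon l\to k$ and its pushforward $\phi_*$, then derive $i\circ\mathrm{N}=p\cdot\mathrm{id}$ formally from the two operator identities $\mathrm{N}\circ i=p\cdot\mathrm{id}$ and $i\circ\phi_*=p^q\cdot\mathrm{id}$ together with torsion-freeness of $\mathrm{K}_q(k)$ and $\mathrm{K}_q(l)$. Both arguments rest on exactly the same deep input (no $p$-torsion in Milnor $\mathrm{K}$-theory in characteristic $p$); the paper's version is shorter because it needs neither the tower bookkeeping nor the Frobenius pushforward, while yours has the mild virtue of being an identity of maps on all of $\mathrm{K}_q(l)$ at once rather than a generator-by-generator check, and it makes explicit the pleasant byproduct $\phi_*=p^{q-1}\mathrm{N}$ in the degree-$p$ case. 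Your reading of the statement $\mathrm{N}_{l/k}(\alpha)=\alpha^{[l:k]}$ as the identity $i\circ\mathrm{N}=[l:k]\cdot\mathrm{id}$ under the injection $i$ is the intended one, and your separate treatment of $q=0$ correctly sidesteps the otherwise meaningless factor $p^{q-1}$.
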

    \begin{proof}
        For a proof of the fact that $\mathrm{K}_q(k)$ is $p$-torsion free, see \cite[Theorem 8.1]{GeisserLevine2000Kthrycharp}. We deduce that $i_{l/k}$ is injective because the composition $\mathrm{N}_{l/k} \circ i_{l/k}$ is multiplication by $[l:k]$ and $\KK_q(k)$ is $p$-torsion free. \par
        Note that in order to prove the statement about the norm, it is enough to check it for symbols. Consider $a_1,\dots, a_q \in l^{\times}$ and the symbol $\alpha = \{a_1 , \hdots , a_q \} \in \mathrm{K}_q(l)$. Since $l/k$ is purely inseparable, there exists an integer $r \in \mathbf{N}$ such that $a^{p^{r}}_i \in k$ for every $i=1,\hdots q$. In particular, $\alpha^{p^{rq}}$ belongs to $\mathrm{K}_q(k)$. We then have 
        \begin{align*}
            \mathrm{N}_{l/k}(\alpha)^{p^{qr}} & = \mathrm{N}_{l/k}(\alpha^{p^{qr}}) \\
            &= \alpha^{p^{qr}[l:k]} \\
            &= (\alpha^{[l:k]})^{p^{qr}}.
        \end{align*}
        We conclude that $\mathrm{N}_{l/k}(\alpha) = \alpha^{[l:k]}$ because $\mathrm{K}_q(l)$ is $p$-torsion free.
    \end{proof}
    \begin{prop}\label{prop KK separably closed}
            Let $k$ be a separably closed field. Assume that $[k:k^p]\leq p^{\delta}$ for some $\delta \in \mathbf{N}$. Then for every pair of non-negative integers $i,q$ such that $i+q = \delta$ the field $k$ satisfies $C_i^q$. In particular, $k$ satisfies Kato-Kuzumaki's conjecture.
        \end{prop}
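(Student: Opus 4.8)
The plan is to prove the equivalent assertion that $\mathrm{N}_q(Z/k)=\KK_q(k)$ for every hypersurface $Z\subseteq\P^n_k$ of degree $d$ with $d^i\le n$. Since $k$ is separably closed, every finite extension $l/k$ is purely inseparable, is again separably closed, and satisfies $[l:l^p]=[k:k^p]\le p^\delta$; hence this base‑field statement, applied to all such $l$, yields the full $C_i^q$ property. Throughout I will use two facts: that a separably closed field with $[k:k^p]\le p^\delta$ is $C_\delta$ (this is exactly what the proof of Lemma~\ref{lemma p-degree Ci} establishes), and that purely inseparable extensions contribute to norm groups via Proposition~\ref{prop norm purely inseparable}, namely $\mathrm{N}_{l/k}(\beta)=\beta^{[l:k]}$, with $\KK_q$ being $p$-torsion free.

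First I would record two inclusions into $N:=\mathrm{N}_q(Z/k)$, writing $\KK_q(k)$ additively. As $Z$ has a $\bar k$-point it has a point over some finite (hence purely inseparable) extension $l_0/k$ of degree $p^{N_0}$, and then $p^{N_0}\KK_q(k)=\mathrm{N}_{l_0/k}\bigl(i_{l_0/k}\KK_q(k)\bigr)\subseteq N$, so $\KK_q(k)/N$ is killed by $p^{N_0}$. Next I claim every symbol $\{a_1,\dots,a_q\}$ with $a_1,\dots,a_q$ \emph{$p$-independent} lies in $N$. Extending such a tuple to a $p$-basis shows that $K:=\bigcup_m k(a_1^{1/p^m},\dots,a_q^{1/p^m})$ satisfies $[K:K^p]\le p^{\delta-q}=p^i$ and is separably closed, hence $C_i$; since $d^i\le n$, the hypersurface acquires a point over a finite subextension, and therefore over $L:=k(a_1^{1/p^m},\dots,a_q^{1/p^m})$ for $m$ large, where $[L:k]=p^{qm}$. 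Using Proposition~\ref{prop norm purely inseparable} together with multiplicativity of each slot of a symbol,
\[
\mathrm{N}_{L/k}\bigl(\{a_1^{1/p^m},\dots,a_q^{1/p^m}\}\bigr)=\{(a_1^{1/p^m})^{p^m},\dots,(a_q^{1/p^m})^{p^m}\}=\{a_1,\dots,a_q\},
\]
so $\{a_1,\dots,a_q\}\in N$.

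To finish I would reduce modulo $p$. Let $A\subseteq\KK_q(k)$ be the subgroup generated by $p$-independent symbols; by the previous paragraph $A\subseteq N$. The key point is that $A$ surjects onto $\KK_q(k)/p\KK_q(k)$: a symbol whose entries are $p$-dependent maps to $0$ under the differential symbol $\{a_1,\dots,a_q\}\mapsto\tfrac{da_1}{a_1}\wedge\cdots\wedge\tfrac{da_q}{a_q}$, because $p$-dependence of the $a_j$ means the $da_j$ are $k$-linearly dependent, so $\tfrac{da_1}{a_1}\wedge\cdots\wedge\tfrac{da_q}{a_q}=0$ in $\Omega^q_k$; since this symbol map is injective on $\KK_q(k)/p$ by Bloch–Kato–Gabber, every $p$-dependent symbol already lies in $A+p\KK_q(k)$. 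Thus $A+p\KK_q(k)=\KK_q(k)$, and iterating this identity $N_0$ times gives $\KK_q(k)=A+p^{N_0}\KK_q(k)$. Combining with the two inclusions, $\KK_q(k)=A+p^{N_0}\KK_q(k)\subseteq N$, whence $N=\KK_q(k)$. The extreme cases fit this scheme: for $q=0$ the field is $C_\delta=C_i$, so $Z$ has a rational point and index $1$; for $i=0$ one may alternatively invoke Proposition~\ref{prop separable norms coincide with norms}, since $\mathrm{N}_\delta^s(Z/k)=\KK_\delta(k)$ is immediate over a separably closed field. (The degenerate range $q>\log_p[k:k^p]$ is automatic, as then $\KK_q(k)/p=0$.)

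I expect the main obstacle to be precisely this mod‑$p$ generation statement, i.e. controlling symbols whose entries are $p$-dependent. The clean tool is the Bloch–Kato–Gabber isomorphism $\KK_q(k)/p\cong\nu_q\subseteq\Omega^q_k$; alternatively, one can argue by hand that such symbols vanish modulo $p$, using the Steinberg relation and $\{x,x\}=\{x,-1\}$ together with $-1\in k^{\times p}$ to peel off repeated directions. The remaining ingredients—producing a point over a controlled purely inseparable extension and the explicit norm computation—are the direct multivariable analogues of the $q=1$ argument and should present no difficulty.
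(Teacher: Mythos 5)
Your proof is correct, but it takes a genuinely different route from the paper's. The paper argues by induction on $\delta$: given a symbol $\{a_1,\hdots,a_q\}$ with $a_1\notin k^p$, it passes to $K'=\bigcup_r k(a_1^{1/p^r})$, whose $p$-degree is at most $p^{\delta-1}$, applies the inductive hypothesis for the pair $(i,q-1)$ to realise $\{a_2,\hdots,a_q\}$ as a norm from an extension splitting $Z$, descends to a finite level $k_{r_0}$, and concludes with the projection formula $\mathrm{N}_{k_{r_0}(b_1,\hdots,b_n)/k}(\{a_1^{1/p^{r_0}},\beta_0\})=\{a_1,\hdots,a_q\}$. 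You avoid the induction entirely by a different decomposition of $\KK_q(k)$: a single purely inseparable splitting field of $Z$ gives $p^{N_0}\KK_q(k)\subseteq\mathrm{N}_q(Z/k)$; $p$-independent symbols are treated in one stroke by adjoining $p^m$-th roots of all $q$ entries simultaneously, which drops the $p$-degree to $p^i$ and lets you invoke the $C_i$ property of the resulting separably closed field, with the same norm computation the paper uses; and the Bloch--Gabber--Kato differential symbol shows these symbols generate $\KK_q(k)/p$, after which iterating $\KK_q(k)=A+p\KK_q(k)$ closes the gap. Both arguments ultimately rest on the $p$-torsion-freeness of $\KK_q$ in characteristic $p$ (through Proposition~\ref{prop norm purely inseparable}); yours additionally imports Bloch--Gabber--Kato, whereas the paper stays self-contained at the cost of a more delicate induction (the descent from $K'$ to a finite level and the associated compatibility diagram), so your version is shorter but relies on a deeper external input. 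Two small caveats: your parenthetical suggestion that the mod-$p$ vanishing of $p$-dependent symbols could be checked ``by hand'' via the Steinberg relation is not convincing as stated, since $p$-dependence does not produce repeated or simply related entries, so injectivity of the differential symbol should be regarded as essential; and you should explicitly record, as the paper does in the one-variable case by citing an external lemma, the fact that adjoining all $p$-power roots of $q$ members of a $p$-basis lowers $[K:K^p]$ by exactly $p^{q}$.
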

        \begin{proof}
            Let us fix $i$ for the rest of the proof and proceed by induction on $q$. The case $q = 0$ corresponds to Proposition~\ref{prop separable closed Cdelta}. Let us write our inductive hypothesis explicitly: we assume that for every separably closed field $K$ such that $[K:K^p]\leq p^{\delta - 1}$, finite extension $K'/K$, symbol $\alpha \in \KK_{q-1}(K')$, and hypersurface $Z \subseteq \P^n_{K'}$ of degree $d$ such that $d^i\leq n$, there exists a finite extension $L/K'$ such that $Z'(L)\neq \emptyset$ and $\alpha$ belongs to $\mathrm{N}_{q-1}(L/K')$. \par
            
            Let $l/k$ be a finite extension and $Z \subseteq \mathbf{P}^n_l$ be hypersurface of degree $d$ such that $d^i \leq n$. It is enough to prove that every symbol $\alpha = \{a_1, \hdots , a_q\} \in \mathrm{K}_q(k)$ belongs to $\mathrm{N}_q(Z/l)$. We may assume that $a_1 \not \in k^p$. Indeed, $\alpha$ belongs to $\mathrm{N}_q(Z/l)$ as soon as $\{a^{1/p}_1, a_2,\hdots , a_q \}$ does. In the case that $a_1$ is a $p^r$-power for every $r\in \mathbf{N}$, the symbol $\alpha$ belongs to $\mathrm{N}_q(l'/l)$ for every finite extension $l'/l$ due to Proposition \ref{prop norm purely inseparable}. For $r \in \mathbf{N}$ we denote by $k_r$ the field $k(a_1^{1/p^r})$ and fix
            \[ K := \bigcup_{r \geq 1} k_r.\]
            Note that $[K:K^p] \leq p^{\delta -1}$, see \cite[Lemma 3.15]{DiegoLuco2023TransferSerreII}. We can apply our inductive hypothesis to find a finite extension $L/K$ and $\beta \in \mathrm{K}_{q-1}(L)$ such that $Z(L) \neq \emptyset$ and $\mathrm{N}_{L/K}(\beta) = \{a_2,\dots , a_q\}|_{K}$. Let $b_1,\hdots ,b_n \in L$ be a generating set for the extension $L/K$ and denote by $p_1(T), \hdots , p_n(T) \in K[T]$ their corresponding minimal polynomials. For a big enough integer $r_0 \in \mathbf{N}$ we may assume that $p_1(T), \hdots,p_n(T) \in k_{r_0}[T]$, that $Z(k_{r_0}(b_1,\hdots , b_n)) \neq \emptyset$, and $\beta$ comes from $\beta_0 \in \mathrm{K}_{q-1}(k_{r_0}(b_1,\hdots ,b_n))$. After noting that the tensor product $k_{r_0}(b_1,\dots , b_n) \otimes_{k_{r_0}} K$ is a field, we can apply \cite[Lemma 7.3.6]{GilleSzamuely2017CentralGalois} to deduce the following commutative diagram
            \begin{equation*}
                \begin{tikzcd}[row sep = 7ex , column sep = 13ex]
                    \mathrm{K}_{q-1}(k_{r_0}(b_1,\hdots b_n)) \ar[r,"\mathrm{N}_{k_{r_0}(b_1,\hdots ,b_n)/k_{r_0}}"] \ar[d, "\iota"]&  \mathrm{K}_{q-1}(k_{r_0}) \ar[d, "\iota"] \\
                    \mathrm{K}_{q-1}(L) \ar[r, "\mathrm{N}_{L/K}"]& \mathrm{K}_{q-1}(K).
                \end{tikzcd}
            \end{equation*}
            Since the vertical arrows are injective due to Proposition \ref{prop norm purely inseparable}, we deduce that 
            \[
            \mathrm{N}_{k_{r_0}(b_1,\hdots ,b_n)/k_{r_0}}(\beta_0) = \{a_2,\hdots , a_q\}.
            \]
            We conclude that $\alpha$ is a norm because
            \begin{align*}
                \mathrm{N}_{k_{r_0}(b_1,\hdots ,b_n)/k}(\{a_1^{1/p^{r_0}} , \beta _0 \}) &=  \mathrm{N}_{k_{r_0}/k} \circ \mathrm{N}_{k_{r_0}(b_1,\hdots ,b_n)/k_{r_0}}(\{a_1^{1/p^{r_0}} , \beta _0 \}) \\
                &=  \mathrm{N}_{k_{r_0}/k} \left( \{a_1^{1/p^{r_0}},  \mathrm{N}_{k_{r_0}(b_1,\hdots ,b_n)/k_{r_0}}(\beta_0) \}  \right) \\
                &=  \mathrm{N}_{k_{r_0}/k}(\{ a_1^{1/p^{r_0}}, a_2,\hdots , a_q\}) \\
                &= \{a_1 ,\hdots , a_q \}.
            \end{align*}
            \vspace*{-1ex}
        \end{proof}
\bibliographystyle{alpha}
\bibliography{ref.bib}
\end{document}